\pdfoutput=1

\newif\ifpersonal
\personalfalse % no personal notes in the submission version

\documentclass[10pt,reqno,a4paper,dvipsnames,x11names]{amsart}

\usepackage[utf8]{inputenc}
\usepackage[T1]{fontenc}
\usepackage[english]{babel}
\usepackage[a4paper,margin=1.12in]{geometry}

\usepackage{microtype}
\usepackage{mathpazo}
\usepackage{euler}

\usepackage{amsmath,amssymb,amsthm,mathtools,tensor}
\usepackage{stmaryrd}

\usepackage[all,cmtip]{xy}
\usepackage{tikz}
\usetikzlibrary{
 arrows.meta,
 calc,
 positioning,
 fit,
 backgrounds,
 shapes.geometric,
 decorations.markings,
 decorations.pathmorphing
}
\usepackage{tikz-cd}

\tikzset{
 >=Stealth,
 box/.style={draw, rounded corners, inner sep=6pt},
 v/.style={draw, circle, inner sep=1.4pt},
 arr/.style={->, thick},
 darr/.style={->, thick, dashed},
 midarrow/.style={
 postaction={decorate},
 decoration={markings,mark=at position 0.58 with {\arrow{Stealth}}}
 }
}

\usepackage{enumitem}
\usepackage{comment,braket,xspace,csquotes}

\usepackage{xcolor}
\usepackage{hyperref}
\hypersetup{
 colorlinks=true,
 linkcolor=blue!60!black,
 citecolor=blue!60!black,
 urlcolor=blue!60!black
}
\numberwithin{equation}{section}

\theoremstyle{plain}
\newtheorem{theorem}{Theorem}[section]
\newtheorem{proposition}[theorem]{Proposition}
\newtheorem{corollary}[theorem]{Corollary}
\newtheorem{lemma}[theorem]{Lemma}

\theoremstyle{definition}
\newtheorem{definition}[theorem]{Definition}
\newtheorem{example}[theorem]{Example}

\newtheorem{hypothesis}[theorem]{Hypothesis}

\theoremstyle{remark}
\newtheorem{remark}[theorem]{Remark}

\usepackage[capitalize,nameinlink]{cleveref}

\crefname{theorem}{Theorem}{Theorems}
\Crefname{theorem}{Theorem}{Theorems}
\crefname{proposition}{Proposition}{Propositions}
\Crefname{proposition}{Proposition}{Propositions}
\crefname{corollary}{Corollary}{Corollaries}
\Crefname{corollary}{Corollary}{Corollaries}
\crefname{lemma}{Lemma}{Lemmas}
\Crefname{lemma}{Lemma}{Lemmas}
\crefname{definition}{Definition}{Definitions}
\Crefname{definition}{Definition}{Definitions}
\crefname{hypothesis}{Hypothesis}{Hypotheses}
\Crefname{hypothesis}{Hypothesis}{Hypotheses}
\crefname{remark}{Remark}{Remarks}
\Crefname{remark}{Remark}{Remarks}

\makeatletter
\renewcommand{\thepart}{\Roman{part}}
\renewcommand{\part}[1]{%
  \refstepcounter{part}%
  \addcontentsline{toc}{part}{Part~\thepart.\ #1}%
  \par\vspace{2.2em}%
  {\centering
    {\Large\bfseries Part~\thepart}\par\vspace{0.35em}%
    {\LARGE\bfseries #1}\par
  }%
  \vspace{1.2em}\par\noindent
}
\renewcommand*{\l@part}[2]{%
  \par\addvspace{0.75em}%
  \noindent\textbf{#1}\hfill\textbf{#2}\par\addvspace{0.15em}%
}
\makeatother

\newcommand{\cO}{\mathcal{O}}

\newcommand{\Hom}{\mathrm{Hom}}

\DeclareMathOperator{\Spec}{Spec}
\DeclareMathOperator{\Sym}{Sym}
\DeclareMathOperator{\Der}{Der}

\DeclareMathOperator{\fib}{fib}
\DeclareMathOperator{\Tot}{Tot}
\DeclareMathOperator{\MC}{MC}
\DeclareMathOperator{\CE}{CE}
\DeclareMathOperator{\DR}{DR}

\DeclareMathOperator{\Unf}{Unf}
\DeclareMathOperator{\Flat}{Flat}
\DeclareMathOperator{\Map}{Map}

\DeclareMathOperator{\id}{id}

\newcommand{\kk}{k}

\newcommand{\Uder}{\mathbb U_{\mathrm{der}}}
\newcommand{\bbT}{\mathbb T}
\newcommand{\bbL}{\mathbb L}

\newcommand{\bas}{\mathrm{bas}}
\newcommand{\der}{\mathrm{der}}
\newcommand{\tr}{\mathrm{tr}}

\newcommand{\eff}{\mathrm{eff}}

\newcommand{\GM}{\mathrm{GM}}

\DeclareMathOperator{\Pois}{Pois}
\DeclareMathOperator{\Pol}{Pol}

\DeclareMathOperator{\HH}{HH}
\DeclareMathOperator{\HC}{HC}
\DeclareMathOperator{\HP}{HP}

\DeclareMathOperator{\Obs}{Obs}

\DeclareMathOperator{\hofib}{hofib}
\DeclareMathOperator{\Cone}{Cone}

\DeclareMathOperator{\Stab}{Stab}
\DeclareMathOperator{\gr}{gr}

\newcommand{\hhpi}{\mathfrak h_\pi}
\newcommand{\uupi}{\mathbb U_\pi}
\newcommand{\uuomega}{\mathbb U_\omega}
\newcommand{\kpi}{\mathbb K_\pi}

\newcommand{\Lieder}{\mathcal L}
\newcommand{\planck}{\hslash}
\newcommand{\Qplanck}{\mathcal Q_{\planck}}
\newcommand{\Uplanck}{\mathbb U_{\planck,\pi}}
\newcommand{\Kplanck}{\mathbb K_{\planck,\pi}}

\newcommand{\splanck}{s_{\planck}}
\newcommand{\nablaplanck}{\nabla^{\planck}}
\newcommand{\clplanck}{\operatorname{cl}_{\planck}}

\newcommand{\IBV}{I_{\mathrm{BV}}}
\newcommand{\QBV}{Q_{\mathrm{BV}}}
\newcommand{\UBV}{\mathbb U_{\mathrm{BV}}}
\newcommand{\KBV}{\mathbb K_{\mathrm{BV}}}
\newcommand{\UplanckBV}{\mathbb U_{\planck,\mathrm{BV}}}
\newcommand{\KplanckBV}{\mathbb K_{\planck,\mathrm{BV}}}
\newcommand{\sBV}{s_{\mathrm{BV}}}
\newcommand{\splanckBV}{s_{\planck,\mathrm{BV}}}
\newcommand{\Obscl}{\Obs^{\mathrm{cl}}}
\newcommand{\Obsq}{\Obs^{\mathrm{q}}_{\planck}}
\DeclareMathOperator{\Tr}{Tr}
\newcommand{\ev}{\operatorname{ev}}

\newcommand{\PSM}{\mathrm{PSM}}
\newcommand{\Fields}{\mathcal F}

\title[Shifted Poisson unfoldings and quantum anomalies]{Shifted Poisson unfoldings and quantum anomalies}
\author[M. Corr\^ea]{Mauricio Corr\^ea}
\address[M. Corr\^ea]{Dipartimento di Matematica, Universit\`a degli Studi di Bari, Via E. Orabona 4, I-70125, Bari, Italy}
\email{mauricio.correa.mat@gmail.com, mauricio.barros@uniba.it}
\author[S. Noja]{Simone Noja}
\address[S. Noja]{Dipartimento di Matematica, Universit\`a degli Studi di Bari, Via E. Orabona 4, I-70125, Bari, Italy}
\email{simone.noja@uniba.it}
\date{}
\subjclass[2020]{Primary 53D17, 53D55, 81T45; Secondary 14A30, 81T50, 81T70}
\keywords{shifted Poisson geometry, Poisson unfoldings, derived foliations,
deformation quantization, transport anomalies, BV formalism, AKSZ theory,
Poisson sigma model}

\begin{document}

\begin{abstract}
Families of classical field theories often depend on geometric
parameters.  A basic question is whether the corresponding classical
observables can be compared by flat parallel transport, and whether this
comparison survives quantization.  We study this problem for families of
shifted Poisson structures.  To such a family we attach a Poisson
transverse controller \(\mathbb U_\pi\), a homotopy stabilizer encoding
transverse symmetries which preserve the Poisson Maurer--Cartan element
up to coherent homotopy.  Its flat splittings are precisely transversal
shifted Poisson unfoldings, and they act on vertical symmetries, Poisson
cohomology and local deformation theory.  We then formulate the
\(\hslash\)-adic lifting problem for quantized objects; its degree-two
obstruction classes are the transport anomalies.  The construction is
realized for star-products, BV observables, factorization algebras and
AKSZ theories.  For the Poisson sigma model, anomaly-free quantization
of the flat transport makes the Cattaneo--Felder/Kontsevich boundary
product horizontal over the parameter space.
\end{abstract}

\maketitle
\tableofcontents

\section{Introduction}\label{sec:introduction}

Classical and quantum field theories rarely occur as isolated objects.
They often vary with geometric or physical parameters: coupling
constants, background metrics or complex structures, symplectic or
Poisson targets, boundary conditions, and moduli of branes or defects.
Such a parameter space does not automatically provide identifications
between the corresponding theories.  One may quantize each fibre
separately and still fail to have a flat way of comparing the resulting
observables along the base.  The problem addressed in this paper is to
identify the geometric structure controlling such flat comparison, and
to describe the anomaly classes obstructing its quantization.

The motivating field-theoretic example is the Poisson sigma model \cite{IkedaPSM,SchallerStrobl,CattaneoFelderAKSZ}.  A
Poisson manifold \(N\) determines the AKSZ target \(T^*[1]N\) with
Hamiltonian
\[
 \Theta_\pi=\frac12\pi^{ij}(x)p_ip_j.
\]
If \(N\) varies over a base \(S\), one obtains a family of classical BV
theories.  On the disk, the perturbative boundary product is the
Cattaneo--Felder realization of Kontsevich's star-product
\cite{KontsevichDQ,CattaneoFelderPath,CattaneoFelderAKSZ}.  A flat
variation of the target should therefore induce flat variation of
classical BV observables, and, after quantization, a horizontal family
of boundary star-products.  The obstruction to such horizontal
quantized comparison is what we call a transport anomaly.

We formulate this problem in shifted Poisson geometry.  A relative
shifted Poisson structure \(\pi\) is a Maurer--Cartan element in a
filtered Lie algebra of shifted polyvectors.  A deformation of \(\pi\)
records how this Maurer--Cartan element changes.  An unfolding is more
rigid: it is a lift of the tangent directions of the base to transverse
infinitesimal symmetries preserving \(\pi\), up to coherent homotopy,
and satisfying a flatness condition. This follows the classical philosophy of unfoldings of singular
foliations~\cite{Suwa81,Suwa83} and its transverse Lie-algebroid
formulations~\cite{Quallbrunn17,CorreaMolinuevoQuallbrunn23}. Thus the basic distinction is
\[
 \text{deforming }\pi
 \qquad\neq\qquad
 \text{transporting }\pi.
\]
Here transport is meant in the Gauss--Manin sense: base directions act
by flat parallel transport on the geometric, cohomological, or
observable structures attached to the fibres.

\medskip
\noindent\textbf{Main physical consequence.}
Let \((N/S,\pi)\) be a (controller-admissible) flat family of Poisson
targets, and suppose that the associated Poisson sigma model is equipped
with a perturbative quantum BV/BFV datum.  Then the flat Poisson
unfolding induces flat classical BV transport of the PSM field theory.
If the resulting \(\hslash\)-adic anomaly vanishes, the boundary
Cattaneo--Felder/Kontsevich products are horizontal:
\begin{equation}\label{eq:intro-horizontal-star}
 \nablaplanck_\xi(f\star_\pi g)
 =
 \nablaplanck_\xi(f)\star_\pi g
 +
 f\star_\pi\nablaplanck_\xi(g).
\end{equation}
At order \(\hslash^{r+1}\), the obstruction to this identity is the
image, in Hochschild cohomology, of the corresponding transport-anomaly
class.  Thus the formalism produces a precise cohomological obstruction
to horizontal quantization of a family of Poisson sigma models.

\subsection*{Poisson unfoldings}

Shifted symplectic and shifted Poisson structures provide derived phase
spaces for many moduli problems in geometry and field theory.  Shifted
symplectic structures were introduced by Pantev--To\"en--Vaqui\'e--
Vezzosi, and shifted Poisson structures were developed by
Calaque--Pantev--To\"en--Vaqui\'e--Vezzosi through filtered polyvectors
and Maurer--Cartan theory~\cite{PTVV,CPTVV}.  We combine this viewpoint
with derived foliations in the sense of To\"en--Vezzosi
\cite{TVFoliations,TVIntegrability}.

A shifted Poisson structure has an associated Hamiltonian derived
foliation, denoted \(\mathcal F_\pi/S\).  Its ordinary transverse
controller
\(
 \Uder(\mathcal F_\pi/S)
\)
controls flat lifts of base directions preserving the Hamiltonian leaf
geometry.  Equivalently, by the transverse-controller theorem for
derived foliations, flat splittings of \(\Uder(\mathcal F_\pi/S)\)
describe the underlying transversal unfoldings of the Hamiltonian
foliation~\cite{CorreaUnfoldings}.  A shifted Poisson unfolding is the
Poisson-compatible refinement of this datum: the lifted transverse
symmetry must also preserve the Poisson Maurer--Cartan element, not
strictly but up to a specified coherent homotopy.

The central object of the paper is therefore the Poisson transverse
controller
\begin{equation}\label{eq:intro-controller-new}
 \uupi
 :=
 \hofib
 \left(
  \Uder(\mathcal F_\pi/S)
  \longrightarrow
  \bbT^{\mathrm{ext}}_\pi\Pois_n(X/S)
 \right).
\end{equation}
It is the homotopy stabilizer of the Poisson Maurer--Cartan element
inside the transverse symmetries of the Hamiltonian foliation.  In more
concrete terms, an element of this controller is an underlying
transverse symmetry together with a homotopy witnessing that its
infinitesimal effect on \(\pi\) is trivial.  This homotopy is part of
the geometry: it is precisely what turns an underlying foliated
unfolding into a Poisson-compatible unfolding.

The first main theorem makes this refinement precise.  For every
globally controller-admissible shifted Poisson family,
\begin{equation}\label{eq:intro-classification-new}
 \Unf^{\tr}_{\Pois_n}(X/S,\pi)
 \simeq
 \Flat^{\der}_S(\bbT_S,\uupi).
\end{equation}
Thus a transversal shifted Poisson unfolding is precisely a flat
splitting of the Poisson stabilizer controller.  In effective affine
models this becomes an ordinary flat splitting of the quotient
controller, while in the non-effective case the crossed controller
retains residual stabilizers.  In the non-degenerate case, where \(\pi\) corresponds to a shifted
symplectic form \(\omega\) by the non-degenerate
Poisson--symplectic comparison of~\cite{CPTVV}, the same construction
becomes the homotopy stabilizer of \(\omega\); the
unfoldings are then flat transverse symplectic connections, up to
coherent homotopy.

A flat Poisson unfolding transports more than the Poisson structure
itself.  If
\[
 s:\bbT_S\longrightarrow\uupi
\]
is a flat splitting and
\[
 \kpi:=\fib(\uupi\to\bbT_S),
\]
then the vertical controller carries a flat adjoint action
\[
 \nabla^s_\xi(k)=[s(\xi),k].
\]
The same splitting acts, with the stabilizing homotopy included, on the
twisted Poisson complex; hence it transports relative Poisson
cohomology.  The formal neighbourhood of \(s\) in the derived space of
unfoldings is governed by
\begin{equation}\label{eq:intro-def-complex-new}
 \mathfrak{Def}_{\pi,s}
 :=
 \mathbf R\Gamma
 \bigl(S,
 C^\bullet_{\mathrm{CE}}(\bbT_S;\kpi)
 \bigr),
\end{equation}
or by \(\Tot(\Omega_S^\bullet\otimes\kpi)\) in the smooth de Rham
model.  Its low-degree cohomology gives infinitesimal automorphisms,
first-order deformations, and primary obstructions \cite{Pridham}.  In the smooth
proper symplectic case, the resulting cohomological transport recovers
the usual Gauss--Manin connection \cite{KatzOda}.

\subsection*{Quantization and anomalies}

The second main mechanism is the filtered lifting principle.  Suppose
that the classical controller \(\uupi\) is the reduction modulo
\(\hslash\) of an \(\hslash\)-adically complete quantum symmetry
controller \(\Uplanck\).  This is the type of input expected from
shifted deformation quantization in admissible geometric situations
\cite{CPTVV}; here the question is not the existence of fibrewise
quantizations, but the additional problem of lifting a fixed classical
flat unfolding to the quantized object.

Thus, if
\(
 s:\bbT_S\rightarrow\uupi
\)
is a classical flat Poisson unfolding, a quantized unfolding is a flat
lift
\[
 \splanck:\bbT_S\longrightarrow\Uplanck
\]
of \(s\).  Such a lift is not automatic, even when every fibre has been
quantized.  A partial lift modulo \(\hslash^{r+1}\) has a curvature
defect in the associated graded quantum kernel.  The Bianchi identity
makes this defect closed, and changing the partial lift changes it by a
coboundary, as in the standard obstruction theory of filtered
Maurer--Cartan problems \cite{Hinich, Pridham}.  The obstruction
to extending the lift by one order is therefore a canonical class
\begin{equation}\label{eq:intro-quantum-obstruction-new}
 \mathfrak a_{r+1}(s)
 \in
 H^2\!
 \left(
 \mathbf R\Gamma
 \bigl(
 S,
 C^\bullet_{\mathrm{CE}}
 (\bbT_S;
 \gr^{r+1}_{\planck}\Kplanck)
 \bigr)
 \right).
\end{equation}
If this class vanishes, extensions form a torsor under the corresponding
\(H^1\), and infinitesimal automorphisms are governed by \(H^0\).
These classes are the transport anomalies of the quantized family: they
measure the failure of a chosen fibrewise quantization to admit flat
parallel comparison over \(S\).

The obstruction theory is realized in ordinary deformation quantization.
For symplectic families, Fedosov quantization constructs star-products
from flat connections on Weyl algebra bundles
\cite{Fedosov94,Fedosov}; the existence of flat star-product transport
is constrained by horizontality of the Fedosov characteristic class and
by the filtered inner-curvature obstruction.  For general Poisson
families, controller-compatible Kontsevich formality provides one
possible mechanism for producing star-product transport at the
Hochschild-cochain level~\cite{KontsevichDQ}; the concrete
field-theoretic realization used later is the Poisson sigma model.  Once
a strict flat star-product connection exists, it acts on Hochschild,
negative cyclic and periodic cyclic complexes; quantum characteristic
classes of flat perfect modules are horizontal, in line with the
deformation-quantization approach to cyclic characteristic classes
\cite{BresslerNestTsygan}.

\subsection*{BV, AKSZ and the Poisson sigma model}

The BV part treats a \((-1)\)-shifted symplectic space together with an
action \(I_{\mathrm{BV}}\) satisfying the classical master equation~\cite{BatalinVilkovisky81,SchwarzBV}.  The
relevant controller is the simultaneous stabilizer of the symplectic
form and the BV action.  Its role is simple: a transverse variation of
the phase space must preserve both the odd symplectic structure and the
BV differential, up to the homotopies supplied by the stabilizer.  This
gives flat transport of classical BV cohomology.  Under locality
hypotheses it also gives flat transport of factorization algebras of
classical observables \cite{CostelloGwilliam}; a quantum lift gives the corresponding statement
for quantum observables, and the obstruction is the BV transport
anomaly.

AKSZ theory explains why the construction passes from targets to field
theories~\cite{AKSZ,PTVV}.  If an oriented source \(M\) admits integration and
\[
 \omega_M=\int_M\ev^*\omega
\]
is the transgressed shifted symplectic form on the mapping stack, then a
flat target unfolding transgresses to a flat unfolding of the AKSZ
mapping-space theory.  The same holds for Hamiltonians, Lagrangian
boundary conditions, deformation complexes, and quantum anomaly classes.
Thus the formalism transports not only shifted symplectic forms, but
also the controller data governing BV observables.

The Poisson sigma model is the main concrete model.  For a family of
Poisson targets \((N/S,\pi)\), the AKSZ target is \(T^*[1](N/S)\) with
Hamiltonian
\[
 \Theta_\pi=\frac12\pi^{ij}(x)p_ip_j.
\]
A flat Poisson unfolding induces a flat BV unfolding of the sigma model
on an oriented surface.  On the disk, the standard perturbative BV/BFV
quantization produces the Cattaneo--Felder/Kontsevich boundary product~\cite{CattaneoFelderPath,CattaneoMnevReshetikhin,CattaneoMoshayediWernli,KontsevichDQ}.
The anomaly class~\eqref{eq:intro-quantum-obstruction-new} maps to the
Hochschild class measuring the failure of
\eqref{eq:intro-horizontal-star}.  We also give a completely
anomaly-free class of examples: duals of flat bundles of
finite-dimensional Lie algebras, whose linear Poisson structures
quantize to flat Rees enveloping algebras.

\subsection*{Organization}

The paper has two parts.  In Part~I,
Section~\ref{sec:controller} constructs the Poisson transverse
controller and its strict models.  Section~\ref{sec:classification-cmp}
proves the Poisson stabilizer classification by applying the
transverse-controller theorem for derived foliations to the Hamiltonian
foliation and taking the homotopy fibre over the zero Poisson variation;
it also treats effective truncations, central isotropy, base change, and
the shifted symplectic comparison.  Section~\ref{sec:transport-principle}
develops curvature, formal moduli, the Gauss--Manin comparison, and the
stabilizer--transport principle.

In Part~II, Section~\ref{sec:quantum-principle} proves the filtered
quantum lifting theorem.  Section~\ref{sec:dq-cmp} treats
star-products, Fedosov transport, and Hochschild/cyclic invariants.
Section~\ref{sec:bv-cmp} develops BV unfoldings and factorization
observables.  Section~\ref{sec:aksz-cmp} proves AKSZ transgression of
controllers, splittings, boundary data, and anomalies.
Section~\ref{sec:psm} treats the Poisson sigma model.  The appendix
contains strict affine formulas, the curvature-corrected Cartan normal
form, and the cotangent-lift identities used for the Poisson sigma
model.

\smallskip

\subsection*{Conventions}
The ground field has characteristic zero.  Strict affine calculations
are made in cofibrant perfect models.  Homotopy fibres, mapping spaces
and stabilizers are taken in the appropriate localized
\(\infty\)-categories.  A displayed strict cone presents the underlying
tangent complex of a homotopy stabilizer; unless strictness is
explicitly assumed, the invariant Lie structure is understood in the
\(L_\infty\) sense.

\part{Controllers and flat transport}

\section{The Poisson transverse controller}\label{sec:controller}

This section constructs the derived Lie object governing flat transverse symmetries of a relative shifted Poisson structure.  The ordinary transverse controller of the Hamiltonian derived foliation remembers projectable infinitesimal symmetries of the leaf geometry, but it does not by itself impose compatibility with the Poisson Maurer--Cartan element.  The required refinement is its homotopy stabilizer at~$\pi$.

Two distinctions are essential.  First, genuine deformations of a Poisson tensor on a fixed derived stack are controlled by polyvectors of weight at least two, whereas homotopies arising from infinitesimal changes of coordinates begin in weight one.  The controller must see the latter, because changing a transverse lift by a relative vector field is part of the geometric equivalence relation.  Second, strict Hamiltonian algebroids and strict cones are computational presentations; the invariant controller is a homotopy fibre in the localized category of derived Lie objects.  Subsection~\ref{subsec:poisson-complexes} separates the deformation and gauge complexes, Subsection~\ref{subsec:hamiltonian-variation} constructs the transverse variation morphism, Subsection~\ref{subsec:stabilizer-controller} forms its homotopy stabilizer, and Subsection~\ref{subsec:intrinsic-admissibility} proves presentation independence and records the basic geometric sources of the theory.  Throughout the section, equalities written in strict cone models are to be read as representatives of invariant $L_\infty$ statements; the displayed formulas fix the sign convention used later in BV and AKSZ transport.

\subsection{Poisson deformation and gauge complexes}\label{subsec:poisson-complexes}

We begin by fixing the two filtered complexes that enter the construction.  This prevents a common ambiguity: the tangent complex to the space of Poisson structures on a fixed object is not the same as the extended complex in which infinitesimal coordinate changes provide homotopies.

Let $p:X\to S$ be a derived Artin stack locally of finite presentation, and assume that the relative cotangent complex is perfect wherever polyvectors are used.  In a strict affine chart $X=\Spec A$, $S=\Spec B$, with $A$ cofibrant over $B$, the completed relative $n$-shifted polyvectors are
\begin{equation}\label{eq:polyvectors-definition-cmp}
 \Pol(A/B,n)
 :=\prod_{q\geq 0}
 \underline{\Hom}_A\!\left(
   \Sym_A^q\bigl(\bbL_{A/B}[n+1]\bigr),A
 \right).
\end{equation}
The factor indexed by $q$ has polyvector weight $q$.  The shifted object
\[
 \mathfrak{pol}_n(A/B):=\Pol(A/B,n)[n+1]
\]
is a complete filtered dg-Lie algebra for the shifted Schouten bracket, with decreasing weight filtration
\[
 F^r\mathfrak{pol}_n(A/B)
 :=\prod_{q\geq r}\mathfrak{pol}_n(A/B)^{(q)},
 \qquad
 [F^r,F^s]\subseteq F^{r+s-1}.
\]
These are the standard affine models for shifted polyvectors and shifted Poisson structures; the global construction is obtained by formal localization and descent \cite{CPTVV,Melani}.

A relative $n$-shifted Poisson structure is represented by a Maurer--Cartan element
\begin{equation}\label{eq:poisson-mc-cmp}
 \pi\in\MC\bigl(F^2\mathfrak{pol}_n(A/B)\bigr),
 \qquad
 d\pi+\frac12[\pi,\pi]=0.
\end{equation}
Define
\[
 d_\pi:=d+[\pi,-].
\]
Since $\pi$ has weight at least two, $d_\pi$ preserves every $F^r$ with $r\geq1$.

\begin{definition}[Deformation and extended Poisson complexes]\label{def:poisson-complexes-cmp}
For a strict shifted Poisson model $(A/B,\pi)$, set
\begin{align*}
 \mathfrak g^{\mathrm{def}}_\pi
 &:=\bigl(F^2\mathfrak{pol}_n(A/B),d_\pi\bigr),\\
 \mathfrak g^{\mathrm{ext}}_\pi
 &:=\bigl(F^1\mathfrak{pol}_n(A/B),d_\pi\bigr).
\end{align*}
The first is the \emph{Poisson deformation complex}; the second is the \emph{extended Poisson complex}.  We write
\[
 \bbT^{\mathrm{def}}_\pi\Pois_n(X/S)
 :=\mathfrak g^{\mathrm{def}}_\pi[1],
 \qquad
 \bbT^{\mathrm{ext}}_\pi\Pois_n(X/S)
 :=\mathfrak g^{\mathrm{ext}}_\pi[1]
\]
for their shifted tangent presentations in a chosen affine chart.
\end{definition}

\begin{proposition}[Deformations versus infinitesimal reparametrizations]\label{prop:two-poisson-complexes-cmp}
The complex $\mathfrak g^{\mathrm{def}}_\pi$ governs first-order deformations of the Poisson Maurer--Cartan element on the fixed relative derived stack $X/S$.  

The extended complex $\mathfrak g^{\mathrm{ext}}_\pi$ additionally contains relative vector fields in weight one, and in a strict affine model its shift $\mathfrak g^{\mathrm{ext}}_\pi[1]$ is the tangent complex of the formal action groupoid of relative infinitesimal automorphisms acting on Poisson structures.

More explicitly, there is a filtered inclusion
\(
 \mathfrak g^{\mathrm{def}}_\pi
 \rightarrow
 \mathfrak g^{\mathrm{ext}}_\pi,
\)
and the weight-one component is the relative tangent complex.  Its $d_\pi$-differential has weight-two component
\begin{equation}\label{eq:vector-field-poisson-variation-cmp}
 Y\longmapsto [\pi,Y]=-\Lieder_Y\pi.
\end{equation}
Consequently, in the ordinary unshifted smooth case the beginning of $\mathfrak g^{\mathrm{ext}}_\pi$ is the Lichnerowicz complex
\[
 \bbT_{X/S}
 \xrightarrow{\,[\pi,-]\,}
 \wedge^2\bbT_{X/S}
 \xrightarrow{\,[\pi,-]\,}
 \wedge^3\bbT_{X/S}
 \longrightarrow\cdots,
\]
whereas genuine first-order variations of the Poisson tensor begin with the bivector term.
\end{proposition}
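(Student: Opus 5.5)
The plan is to work entirely in the strict affine chart $X=\Spec A$, $S=\Spec B$ with $A$ cofibrant over $B$, and to deduce each assertion from the Maurer--Cartan formalism of the complete filtered dg-Lie algebra $\mathfrak{pol}_n(A/B)$. For the first claim, I will consider a square-zero extension $k[\varepsilon]$ with $\varepsilon$ of the appropriate degree. I will then write a first-order deformation of $\pi$ as $\pi+\varepsilon\eta$ with $\eta\in F^2\mathfrak{pol}_n(A/B)$, since Poisson structures are required to lie in weight at least two. Expanding $d(\pi+\varepsilon\eta)+\frac12[\pi+\varepsilon\eta,\pi+\varepsilon\eta]=0$ and using the Maurer--Cartan equation for $\pi$ together with $\varepsilon^2=0$ leaves exactly $d_\pi\eta=0$. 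Gauge equivalences internal to $F^2$, namely $\eta\mapsto\eta+d_\pi\lambda$, identify deformations up to equivalence with $H^0$ of $\mathfrak g^{\mathrm{def}}_\pi[1]$. Here I use that $d_\pi$ preserves $F^2$, because $[F^2,F^2]\subseteq F^3$ and $[\pi,-]$ raises weight by at least one. This is the standard Hinich--Pridham identification, so I will only cite it.

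For the second claim, I will identify the weight-one part: $\underline{\Hom}_A(\bbL_{A/B}[n+1],A)[n+1]\simeq\bbT_{A/B}$, so the weight-one piece of $\mathfrak{pol}_n$ is the relative tangent complex, that is, relative derivations $\Der_B(A,A)$ in the cofibrant model. The filtered inclusion $F^2\hookrightarrow F^1$ is a map of filtered complexes because $d_\pi$ preserves each $F^r$ with $r\ge1$. I will then compute the weight-two component of $d_\pi$ on a vector field $Y$. The internal differential $d$ preserves weight, and the Schouten bracket satisfies $[\pi^{(q)},Y]\in$ weight $q$. Hence the weight-two part of $d_\pi Y$ is $[\pi^{(2)},Y]$, or more invariantly the weight-$q$ component $[\pi^{(q)},Y]$. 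The identity $[\pi,Y]=-\Lieder_Y\pi$ is the defining property of the Schouten bracket with a vector field, up to the sign convention $[Y,\pi]=\Lieder_Y\pi$ and graded antisymmetry. The sign depends on the parity of the shifted degrees. I will fix it by checking the unshifted case, where $Y$ has degree $0$ in $\mathfrak{pol}$ after the shift, and then note that it propagates by the Koszul rule.

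For the action-groupoid interpretation, I will present the formal action of the prounipotent group $\exp(\Der_B(A,A)\text{-part of }F^1)$ on $\MC(F^2)$. Linearizing at $\pi$ gives the two-term cone $\mathrm{Cone}(\bbT_{A/B}\to F^2)$, with the map $Y\mapsto[\pi,Y]$. I will identify this cone, with the remaining weight-one $d$-differential, with $F^1$ equipped with $d_\pi$, using the splitting $F^1=\mathfrak{pol}^{(1)}\oplus F^2$ as graded objects. The tangent complex of a quotient stack $[\MC/G]$ at a point is precisely this cone, shifted by one, which is the content of $\mathfrak g^{\mathrm{ext}}_\pi[1]$. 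The last claim then follows by specialization to $n=0$, smooth $X/S$ with no derived structure. There the weight-$q$ piece is $\wedge^q\bbT_{X/S}$ in degree $q$ after the shift, and $d_\pi=[\pi,-]$ is the Lichnerowicz differential. Genuine variations start at $\wedge^2\bbT$ since $\mathfrak g^{\mathrm{def}}$ omits weight one.

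I expect the main obstacle to be the precise identification of the weight-one piece with $\bbT_{A/B}$ including all shifts, together with the consistent sign in $[\pi,Y]=-\Lieder_Y\pi$ across shifted degrees. I would handle this by fixing bidegree conventions once, verifying the sign in the unshifted smooth Lichnerowicz case, and transporting the result by the Koszul rule. The groupoid statement should be treated only at the level of tangent complexes, consistent with the paper's convention that strict cones present invariant $L_\infty$ data.
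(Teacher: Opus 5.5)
Your proposal is correct and follows the paper's proof essentially step by step. The shared steps are: the bracket estimate $[\pi,F^r]\subseteq F^{r+1}$ for the subcomplex and filtered-inclusion claims; the first-order Maurer--Cartan computation over $k[\varepsilon]/(\varepsilon^2)$ for $\mathfrak g^{\mathrm{def}}_\pi$; the identification $F^1/F^2\simeq\bbT_{A/B}$ together with $[\pi,Y]=-[Y,\pi]=-\Lieder_Y\pi$; and the reading of $(F^1,d_\pi)$ as the result of adjoining relative vector fields to the deformation complex. Your explicit cone description of $d_\pi$ via the graded splitting $F^1=\mathfrak{pol}^{(1)}\oplus F^2$ only makes the paper's last step more concrete.

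There are two small slips, neither affecting the argument. First, in $\mathfrak{pol}_0=\Pol(A/B,0)[1]$ the piece $\wedge^q\bbT_{X/S}$ sits in degree $q-1$, not $q$, which matches your own placement of vector fields in degree $0$. Second, relative vector fields do not form a pronilpotent Lie algebra, so the group should only be exponentiated after tensoring with nilpotent Artinian ideals, as your word \emph{formal} already indicates.
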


\begin{proof}
The bracket estimate $[F^r,F^s]\subseteq F^{r+s-1}$ and the inclusion $\pi\in F^2$ imply
\[
 [\pi,F^r]\subseteq F^{r+1}\subseteq F^r
 \qquad (r\geq1),
\]
so both filtered pieces are subcomplexes of the twisted polyvector complex.  The formal Maurer--Cartan functor of the complete pronilpotent dg-Lie algebra $F^2\mathfrak{pol}_n$ has tangent complex $\mathfrak g^{\mathrm{def}}_\pi[1]$ at~$\pi$: over $\kk[\varepsilon]/(\varepsilon^2)$ the element $\pi+\varepsilon\alpha$ is Maurer--Cartan exactly when $d_\pi\alpha=0$, and infinitesimal homotopies are the usual degree-zero gauge terms.  This is the standard tangent calculation for shifted Poisson structures \cite{CPTVV,Melani}.

The quotient $F^1/F^2$ is the weight-one polyvector complex, which identifies with relative vector fields under the perfectness assumption.  For a relative vector field $Y$, the infinitesimal action on the Poisson tensor is $\Lieder_Y\pi=[Y,\pi]$, hence the weight-two component of $d_\pi Y=[\pi,Y]$ is $-\Lieder_Y\pi$.  Therefore the total dg-Lie complex obtained by adjoining infinitesimal relative automorphisms to the Poisson deformation complex is precisely $F^1\mathfrak{pol}_n$ with differential $d_\pi$.  Equivalently, its shift is the tangent complex of the corresponding formal action groupoid.  This is the derived version of the classical Lichnerowicz--Poisson complex \cite{Koszul,Pridham}.
\end{proof}

\begin{remark}[Why the controller uses the extended complex]\label{rem:why-extended-cmp}
A transverse lift may be changed by a relative vector field without changing its projection to the base.  If a projectable lift has symbol $Y$ and $V$ is relative, then
\[
 d_\pi V=\Lieder_Y\pi
 \quad\Longleftrightarrow\quad
 \Lieder_{Y+V}\pi=0
\]
in the ordinary Poisson sign convention of \eqref{eq:vector-field-poisson-variation-cmp}.  Thus a null-homotopy in weight one is exactly the correction turning a projectable lift into an infinitesimal Poisson symmetry.  Restricting the stabilizer target to $F^2$ would lose this geometric operation and would not contain the canonical homotopies associated with the cotangent Hamiltonian algebroid.
\end{remark}

\subsection{Hamiltonian presentations and transverse variation}\label{subsec:hamiltonian-variation}

The preceding complexes encode the Poisson datum.  To speak about transverse transport, we must also model its Hamiltonian leaf geometry and the projectable derivations of that model.  The required input is a Hamiltonian Chevalley--Eilenberg chart together with an action of its basic derivations on shifted polyvectors.

\smallskip

In a strict affine chart $B\to A$, let
\[
 \rho_\pi:\hhpi\longrightarrow\bbT_{A/B}
\]
be a cofibrant perfect dg-Lie algebroid.  A homogeneous basic first-order derivation of degree $r$ is a triple
\[
 D=(\theta,\delta,\xi),
 \quad \text{where}\quad
 \theta\in\bbT_{A/\kk}^r,
 \; \;
 \xi\in\bbT_{B/\kk}^r,
 \; \;
 \delta:\hhpi\to\hhpi[r],
\]
satisfying, for homogeneous $a\in A$ and $x,y\in\hhpi$,
\begin{align}
 \delta(ax)
 &=\theta(a)x+(-1)^{r|a|}a\delta(x),\label{eq:first-order-leibniz-cmp}\\
 \delta([x,y])
 &=[\delta x,y]+(-1)^{r|x|}[x,\delta y],\label{eq:lie-derivation-cmp}\\
 \rho_\pi(\delta x)
 &=[\theta,\rho_\pi(x)],\label{eq:anchor-compatibility-cmp}
\end{align}
and whose symbol is projectable to~$\xi$:
\begin{equation}\label{eq:basic-symbol-cmp}
 q(\theta)=1\otimes\xi
 \quad\text{in} \quad A\otimes_B\bbT_{B/\kk},
\end{equation}
where $q:\bbT_{A/\kk}\to A\otimes_B\bbT_{B/\kk}$ is the transverse-symbol morphism dual to the cotangent transitivity triangle (and is the usual quotient map in a smooth underived chart).  With commutator bracket and differential, these triples form the dg-Lie algebroid
\[
 D^1_{\bas}(\hhpi/B)\longrightarrow\bbT_{B/\kk}.
\]
The inner map is
\begin{equation}\label{eq:inner-map-cmp}
 \iota_\pi:\hhpi\longrightarrow D^1_{\bas}(\hhpi/B),
 \qquad
 x\longmapsto\bigl(\rho_\pi(x),\operatorname{ad}_x,0\bigr).
\end{equation}
These are the strict affine basic derivations of the transverse-controller formalism; intrinsically they are represented by weight-zero basic graded-mixed derivations of the Chevalley--Eilenberg algebra \cite{CorreaUnfoldings,Nuiten,TVFoliations}.

The projectability condition is precisely what makes the symbol act on relative polyvectors.  Indeed, if $Z$ is a relative vector field, then
\[
 q([\theta,Z])=[q(\theta),q(Z)]=[1\otimes\xi,0]=0,
\]
so $[\theta,Z]$ is again relative.  By extending as a derivation, $\theta$ acts on the completed relative polyvector algebra; we denote this action by $\Lieder_D$, since it depends only on the compatible basic derivation represented by~$D$.

\begin{definition}[Hamiltonian chart]\label{def:ham-presentation-cmp}
A \emph{Hamiltonian chart} for $(X/S,\pi)$ consists of a cofibrant perfect dg- or $L_\infty$-Lie algebroid
\[
 \rho_\pi:\hhpi\longrightarrow\bbT_{X/S}
\]
whose Chevalley--Eilenberg algebra presents a relative derived foliation $\mathcal F_\pi/S$,
\[
 \DR(\mathcal F_\pi/S)\simeq\CE^*(\hhpi),
\]
together with the following data.
\begin{enumerate}[label=\textup{(\roman*)},leftmargin=2.2em]
\item Basic transverse derivations act on the completed shifted-polyvector algebra by filtered dg-Lie derivations, compatibly with their projectable symbols.
\item The composite
\[
 \hhpi \xrightarrow{\iota_\pi} D^1_{\bas}(\hhpi/S)
 \xrightarrow{\nu_\pi} \mathfrak g^{\mathrm{ext}}_\pi[1]
\]
is equipped with a coherent null-homotopy.  In a strict cone chart this is represented by a homogeneous assignment
\[
 x\longmapsto \eta_x\in\bigl(\mathfrak g^{\mathrm{ext}}_\pi\bigr)^{|x|}
\]
satisfying the chain-homotopy identity
\begin{equation}\label{eq:inner-nullhomotopy-chain-cmp}
 d_\pi\eta_x-(-1)^{|x|}\Lieder_{\iota_\pi(x)}\pi=\eta_{d_{\hhpi}x}.
\end{equation}
In particular, for a closed homogeneous section $x$ one has
\[
 d_\pi\eta_x=(-1)^{|x|}\Lieder_{\iota_\pi(x)}\pi .
\]
\item These elements, together with their higher components in an $L_\infty$ model, induce by the universal property of the homotopy fibre, a morphism
\[
 j_\pi:x\longmapsto(\iota_\pi(x),\eta_x)
\]
into the stabilizer of Subsection~\ref{subsec:stabilizer-controller}.
\end{enumerate}
The chart is \emph{strict} when all these data are represented by strict dg objects and a strict cone morphism.
\end{definition}

\begin{proposition}[The transverse variation morphism]\label{prop:variation-map-cmp}
Let $(\hhpi,\rho_\pi)$ be a Hamiltonian chart.  The infinitesimal action on~$\pi$ defines a filtered morphism of complexes
\begin{equation}\label{eq:variation-map-cmp}
 \nu_\pi:
 D^1_{\bas}(\hhpi/S)
 \longrightarrow
 \mathfrak g^{\mathrm{ext}}_\pi[1].
\end{equation}
For a homogeneous element $D$ of degree $r$, the degree-zero map into the shifted target is represented, under the standard suspension convention, by
\[
 \nu_\pi(D)=(-1)^r\Lieder_D\pi.
\]
In degree zero we suppress this suspension sign and simply write $D\mapsto\Lieder_D\pi$.
\end{proposition}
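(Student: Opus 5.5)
The plan is to define $\nu_\pi$ on the strict affine chart by $D=(\theta,\delta,\xi)\mapsto(-1)^r\Lieder_D\pi$. We then check three things in order: that this lands in $F^1\mathfrak{pol}_n(A/B)$ (indeed in $F^2$), that it respects filtrations, and that it commutes with differentials. The global statement follows by formal localization and descent, as for the polyvector models themselves \cite{CPTVV,Melani}.

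\emph{Step 1: well-definedness.} By Definition~\ref{def:ham-presentation-cmp}(i), each basic derivation $D$ of degree $r$ acts on the completed relative polyvector algebra by a filtered derivation $\Lieder_D$ of degree $r$. This uses the projectability computation $q([\theta,Z])=0$, which shows the symbol preserves relative vector fields. The action extends multiplicatively, so it preserves polyvector weight. Hence $\Lieder_D\pi\in F^2\mathfrak{pol}_n\subseteq F^1\mathfrak{pol}_n=\mathfrak g^{\mathrm{ext}}_\pi$, placed in the appropriate degree of the shift $[1]$. The sign $(-1)^r$ is exactly the suspension sign for a degree-$r$ element entering $\mathfrak g^{\mathrm{ext}}_\pi[1]$; in degree zero it is trivial. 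For the filtration statement we give $D^1_{\bas}$ the trivial (or weight) filtration. Since $\Lieder_D$ is weight-preserving and $\pi\in F^2$, the map is filtered.

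\emph{Step 2: chain map.} The key identity is
\[
d_\pi(\Lieder_D\pi)=\Lieder_{[d,D]}\pi
\]
up to the suspension sign. Write $d_\pi\alpha=d\alpha+[\pi,\alpha]$. Because $\Lieder_D$ is a dg-Lie derivation for the Schouten bracket (Definition~\ref{def:ham-presentation-cmp}(i)), applying $\Lieder_D$ to the Maurer--Cartan equation $d\pi+\tfrac12[\pi,\pi]=0$ gives
\[
\Lieder_{[d,D]}\pi+(-1)^r d\Lieder_D\pi+(-1)^r[\pi,\Lieder_D\pi]=0
\]
in the sign conventions fixed by \eqref{eq:vector-field-poisson-variation-cmp}. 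This uses $[\Lieder_D,d]=\Lieder_{[D,d]}$ and the graded derivation property on $\tfrac12[\pi,\pi]$. Rearranging yields $d_\pi\nu_\pi(D)=\nu_\pi(d D)$, with the differential on $\mathfrak g^{\mathrm{ext}}_\pi[1]$ being $-d_\pi$ under the standard shift convention. The signs $(-1)^r$ absorb the Koszul signs.

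\emph{Step 3: consistency check.} We verify the case of a relative vector field $Y$ (i.e.\ $\xi=0$) in degree zero: $\nu_\pi$ reproduces $\Lieder_Y\pi=-[\pi,Y]$. This matches Proposition~\ref{prop:two-poisson-complexes-cmp} and Remark~\ref{rem:why-extended-cmp}, which is what later allows the null-homotopy \eqref{eq:inner-nullhomotopy-chain-cmp}.

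The main obstacle is the sign bookkeeping in Step~2, namely showing that $\Lieder_D$ graded-commutes with $d$ and with $\operatorname{ad}_\pi$ so as to produce exactly the stated sign. I would fix conventions by reducing to degree-zero $D$, where no signs appear and the identity is the derivative of the Maurer--Cartan equation. General degrees would then follow by the Koszul rule applied to $D\otimes\epsilon$ with $\epsilon$ of degree $-r$. If only an $L_\infty$ chart is available, $\nu_\pi$ is defined on the strict rectification, and the statement is invariant up to quasi-isomorphism.
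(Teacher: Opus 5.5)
Your argument is the paper's: apply $\Lieder_D$ to the Maurer--Cartan equation, use $[d,\Lieder_D]=\Lieder_{dD}$ and $\Lieder_D[\pi,\pi]=2(-1)^r[\pi,\Lieder_D\pi]$ to get $d_\pi(\Lieder_D\pi)=\Lieder_{dD}\pi$ exactly (with no suspension sign), and then the convention $d_{V[1]}=-d_V$ makes $D\mapsto(-1)^{r}\Lieder_D\pi$ a chain map. Two signs in your Step~2 need correcting: the first term of your display should be $\Lieder_{[D,d]}\pi=-(-1)^r\Lieder_{dD}\pi$ rather than $\Lieder_{[d,D]}\pi$, and the resulting chain-map condition is $-d_\pi\nu_\pi(D)=\nu_\pi(dD)$, not $d_\pi\nu_\pi(D)=\nu_\pi(dD)$.
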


\begin{proof}
The action is filtered and $\pi$ has weight at least two, so $\Lieder_D\pi\in F^2\mathfrak{pol}_n\subset F^1\mathfrak{pol}_n$.  It remains to verify the chain-map identity.

Let $|D|=r$.  Because the action is a morphism of dg-Lie algebras into derivations, one has
\[
 [d,\Lieder_D]=\Lieder_{dD}.
\]
Applying this identity to~$\pi$ gives
\[
 d(\Lieder_D\pi)-(-1)^r\Lieder_D(d\pi)
 =\Lieder_{dD}\pi.
\]
The Maurer--Cartan equation says $d\pi=-\tfrac12[\pi,\pi]$.  Since $\Lieder_D$ is a derivation of degree~$r$ and $|\pi|=1$,
\[
 \Lieder_D[\pi,\pi]
 =2(-1)^r[\pi,\Lieder_D\pi].
\]
Substitution yields
\[
 d_\pi(\Lieder_D\pi)=\Lieder_{dD}\pi.
\]
The target of $\nu_\pi$ is shifted by $[1]$.  With the cohomological convention $d_{V[1]}=-d_V$, the assignment $D\mapsto(-1)^{|D|}\Lieder_D\pi$ is therefore a degree-zero chain map.  Naturality with respect to brackets and higher brackets follows because the map is the derivative at~$\pi$ of the action of the basic transverse symmetry object on the Maurer--Cartan formal moduli problem.
\end{proof}

\begin{example}[The classical cotangent Hamiltonian chart]\label{ex:cotangent-nullhomotopy-cmp}
Let $X/S$ be smooth and ordinary, and let $\pi$ be an ordinary relative Poisson bivector.  The cotangent bundle $\Omega^1_{X/S}$, with anchor $\pi^\sharp$ and Koszul bracket, is a Lie algebroid \cite{Koszul}.  For $\alpha\in\Omega^1_{X/S}$, the symbol of the inner derivation is $X_\alpha:=\pi^\sharp\alpha$.  With the Schouten convention $[X,\pi]=\Lieder_X\pi$, the element
\[
 \eta_\alpha:=-X_\alpha
\]
satisfies
\[
 d_\pi\eta_\alpha
 =[\pi,-X_\alpha]
 =\Lieder_{X_\alpha}\pi.
\]
Thus the inner cotangent direction is canonically null-homotopic in the extended Poisson complex.  The identity
\(
 \pi^\sharp[\alpha,\beta]_\pi=[\pi^\sharp\alpha,\pi^\sharp\beta]
\)
provides the bracket compatibility required in Definition~\ref{def:ham-presentation-cmp}.
\end{example}

\subsection{The homotopy stabilizer and the controller}\label{subsec:stabilizer-controller}

The ordinary foliation controller allows all projectable transverse symmetries of the Hamiltonian leaf geometry.  We now cut out the derived locus on which the induced Poisson variation is homotopically trivial.  The construction is an instance of the general stabilizer of a Maurer--Cartan element.
Throughout this subsection, a displayed homotopy fibre of a displacement map is shorthand for the tangent derived Lie object of the corresponding homotopy-stabilizer formal moduli problem.  Its underlying cochain complex is the homotopy fibre of the displayed linear map; the Lie brackets are the induced $L_\infty$ brackets, not a bracket placed ad hoc on that complex.

\begin{lemma}[Infinitesimal homotopy stabilizer]\label{lem:homotopy-stabilizer-cmp}
Let a derived Lie algebra $\mathfrak a$ act by filtered derivations on a complete filtered dg-Lie algebra $\mathfrak p$, and let $m\in\MC(\mathfrak p)$.  The derivative of the action at~$m$ is a morphism of complexes
\[
 v_m:\mathfrak a\longrightarrow\mathfrak p^m[1].
\]
Under the standard suspension convention, a homogeneous $a\in\mathfrak a^{|a|}$ is sent to $(-1)^{|a|}a\cdot m$.
Its homotopy fibre is the tangent Lie object of the homotopy stabilizer of~$m$.

In a strict path-object model, the underlying cochain complex of this fibre is
\(
 \mathfrak a\oplus\mathfrak p,
\)
with differential
\begin{equation}\label{eq:cone-differential-cmp}
 d(a,\eta)
 =\bigl(da,d_m\eta-(-1)^{|a|}a\cdot m\bigr).
\end{equation}
In particular, a closed degree-zero element is a pair $(a,\eta)$ satisfying
\[
 da=0,
 \qquad
 d_m\eta=a\cdot m.
\]
The invariant homotopy fibre carries an $L_\infty$-structure, unique up to equivalence; the displayed cone is only its underlying strict complex unless a compatible strictification has been chosen.
\end{lemma}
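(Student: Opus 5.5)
I would prove the lemma in three steps: first that $v_m$ is a chain map, then the strict cone formula, then the identification of the fibre with the tangent Lie object of the stabilizer.

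\emph{Chain map.} I would repeat the computation of Proposition~\ref{prop:variation-map-cmp} with $\mathfrak a$ in place of $D^1_{\bas}(\hhpi/S)$. Since $\mathfrak a$ acts by filtered dg-Lie derivations, $[d,a\cdot(-)]=(da)\cdot(-)$. Applying this to $m$ and using the Maurer--Cartan equation $dm=-\tfrac12[m,m]$, together with $a\cdot[m,m]=2(-1)^{|a|}[m,a\cdot m]$, gives
\[
 d_m(a\cdot m)=(da)\cdot m .
\]
With the convention $d_{V[1]}=-d_V$, the signed assignment $a\mapsto(-1)^{|a|}a\cdot m$ is then a degree-zero chain map $\mathfrak a\to\mathfrak p^m[1]$. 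Here $\mathfrak p^m$ denotes $\mathfrak p$ with differential $d_m=d+[m,-]$.

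\emph{Strict model of the fibre.} I would use the standard path-object model of a homotopy fibre of a chain map $f:V\to W$ over $0$. It is $V\oplus W[-1]$ with differential $(v,w)\mapsto(dv,\,d_{W[-1]}w\pm f(v))$. Taking $W=\mathfrak p^m[1]$ makes $W[-1]=\mathfrak p^m$. Fixing the sign so that $d^2=0$ yields \eqref{eq:cone-differential-cmp}, and the check is immediate from the chain-map identity above. Reading off the degree-zero cocycle condition gives $da=0$ and $d_m\eta=a\cdot m$, as stated.

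\emph{Tangent Lie object of the stabilizer.} This step carries the real content. I would formulate the stabilizer as a formal moduli problem, the homotopy fibre product
\[
 \MC(\mathfrak a)\times_{\MC(\mathfrak p)/\!\!/}\{m\},
\]
that is, the formal group of $\mathfrak a$ acting on the Deligne groupoid of $\mathfrak p$, fibred over the point $m$. In the Lurie--Pridham equivalence, tangent complexes take homotopy fibre products of formal moduli problems to homotopy fibres of their shifted tangent complexes. The tangent of the orbit map at $m$ is the derivative $v_m$, which I would check on the dual numbers $k[\varepsilon]$: the point $\exp(\varepsilon a)\cdot m$ equals $m+\varepsilon\, a\cdot m$. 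Hence the tangent of the stabilizer is $\hofib(v_m)$. The $L_\infty$ structure comes from the equivalence itself, so it is unique up to equivalence. Concretely, it can be produced by homotopy transfer, or by the semidirect-product description $\mathfrak a\ltimes\mathfrak p^m$ truncated along $v_m$.

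I expect the main obstacle to be this last step, specifically making sure the filtered, complete setting meets the pronilpotence hypotheses for the $\MC$ functor. I would handle this by working with the filtration pieces $F^1$ and passing to the limit, using completeness so that the Deligne groupoid is well behaved.
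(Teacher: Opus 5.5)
Your proposal is correct and follows essentially the paper's route. The stabilizer is the homotopy fibre of the formal orbit map at $m$; tangent complexes preserve that limit, so the tangent object is $\hofib(v_m)$; and $\Cone(v_m)[-1]$ serves as the strict model carrying a transferred $L_\infty$-structure, with your explicit chain-map check for $v_m$ supplying a step the paper only invokes.

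There are three minor points to fix. First, $d^2=0$ holds for either sign in the cocone, since the two choices are isomorphic via $\eta\mapsto-\eta$; so the sign in \eqref{eq:cone-differential-cmp} is a convention, not forced. Second, the source of the orbit map should be the formal group of $\mathfrak a$ (tangent $\mathfrak a$), not $\MC(\mathfrak a)$ (tangent $\mathfrak a[1]$). Third, your semidirect-product model should be $\mathfrak a\ltimes\mathfrak p$ twisted by the Maurer--Cartan element $(0,m)$; its twisted differential is exactly \eqref{eq:cone-differential-cmp}, so no truncation is needed.
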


\begin{proof}
After completing at~$m$ and tensoring with nilpotent Artinian ideals, the action determines a morphism from the formal moduli problem represented by $\mathfrak a$ to the formal orbit of~$m$ in the Maurer--Cartan problem of $\mathfrak p$.  The stabilizer is the homotopy fibre of this orbit map.  In characteristic zero, formal moduli problems are governed by derived Lie algebras, and tangent complexes preserve homotopy limits; hence the tangent complex of the stabilizer is the homotopy fibre of $v_m$ \cite{Pridham,Getzler}.

For the strict formula, represent the fibre by $\Cone(v_m)[-1]$.  With the cohomological shift convention, reordering its two summands identifies it with $\mathfrak a\oplus\mathfrak p$ and gives \eqref{eq:cone-differential-cmp}.  The equality $d^2=0$ follows from the chain-map identity for~$v_m$.  Mapping-cone models of homotopy fibres carry transferred $L_\infty$-structures; they need not inherit the naive semidirect dg-Lie bracket \cite{FiorenzaManetti}.  This proves both the strict-complex statement and the invariant $L_\infty$ assertion.
\end{proof}

Apply the lemma to the action of $D^1_{\bas}(\hhpi/S)$ on shifted polyvectors and to $m=\pi$.

\begin{definition}[Poisson basic stabilizer]\label{def:poisson-stabilizer-cmp}
The derived Lie algebra of Poisson-compatible basic derivations is
\begin{equation}\label{eq:poisson-stabilizer-cmp}
 D^1_{\bas,\pi}(\hhpi/S)
 :=\hofib\!\left(
 D^1_{\bas}(\hhpi/S)
 \xrightarrow{\nu_\pi}
 \mathfrak g^{\mathrm{ext}}_\pi[1]
 \right).
\end{equation}
In a strict cone chart, its underlying complex consists of pairs $(D,\eta)$ with differential
\[
 d(D,\eta)
 =\bigl(dD,d_\pi\eta-(-1)^{|D|}\Lieder_D\pi\bigr).
\]
A closed degree-zero pair satisfies
\begin{equation}\label{eq:stabilizer-equation-cmp}
 d_\pi\eta=\Lieder_D\pi.
\end{equation}
\end{definition}

The coherent inner homotopies of a Hamiltonian chart give a morphism
\begin{equation}\label{eq:inner-cone-map-cmp}
 j_\pi:\hhpi\longrightarrow D^1_{\bas,\pi}(\hhpi/S),
 \qquad
 x\longmapsto\bigl(\iota_\pi(x),\eta_x\bigr).
\end{equation}
The base symbol of $j_\pi(x)$ is zero, so this morphism is vertical over~$S$.

\begin{definition}[Poisson transverse controller]\label{def:poisson-controller-cmp}
The \emph{Poisson transverse controller} represented by a Hamiltonian chart is the two-term derived Lie algebroid
\begin{equation}\label{eq:controller-cmp}
 \uupi
 :=\bigl[
 \hhpi\xrightarrow{j_\pi}
 D^1_{\bas,\pi}(\hhpi/S)
 \bigr]
 \longrightarrow\bbT_S.
\end{equation}
In a compatible strict model it is a crossed dg-Lie algebroid; in general the bracket data are understood in the two-term $L_\infty$ sense.  The anchor is induced by the basic-symbol map
\(
 D^1_{\bas}(\hhpi/S)\to\bbT_S
\)
and is independent of the cone variable~$\eta$.
\end{definition}

\begin{proposition}[Crossed structure, effectivity, and isotropy]\label{prop:controller-structure-cmp}
The data in \eqref{eq:controller-cmp} determine a two-term $L_\infty$-algebroid over~$\bbT_S$.  In a strict Hamiltonian chart they form a crossed dg-Lie algebroid whenever the stabilizer admits a strict dg-Lie model compatible with~$j_\pi$.

If $j_\pi$ is represented by a monomorphism of strict complexes, then the ordinary quotient
\begin{equation}\label{eq:effective-controller-cmp}
 \mathfrak u_\pi
 :=D^1_{\bas,\pi}(\hhpi/S)/j_\pi(\hhpi)
 \longrightarrow\bbT_S
\end{equation}
is a strict effective controller.  Without this hypothesis, the crossed controller must be retained; its residual stabilizer is $\ker(j_\pi)$, which is central in the crossed-module sense.
\end{proposition}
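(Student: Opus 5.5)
I would prove the three assertions of Proposition~\ref{prop:controller-structure-cmp} in order: the two-term $L_\infty$-algebroid structure, the strict crossed-module structure, and then effectivity and isotropy.

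\emph{Two-term $L_\infty$-algebroid.} By Lemma~\ref{lem:homotopy-stabilizer-cmp}, $D^1_{\bas,\pi}(\hhpi/S)$ is a derived Lie object, namely the tangent object of the homotopy stabilizer of~$\pi$. The anchor is the composite
\[
D^1_{\bas,\pi}(\hhpi/S)\to D^1_{\bas}(\hhpi/S)\to\bbT_S,
\]
where the first map forgets~$\eta$ and the second is the basic symbol. This composite is a morphism of derived Lie objects, since the projection out of a homotopy fibre is one. By Definition~\ref{def:ham-presentation-cmp}(iii), $j_\pi$ is a morphism of derived Lie objects into the stabilizer. Its composite with the anchor vanishes, because $\iota_\pi(x)$ has base symbol $0$ by \eqref{eq:inner-map-cmp}. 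A morphism $j_\pi$ of this kind, with vanishing composite to $\bbT_S$, is exactly the datum of a two-term $L_\infty$-algebroid: totalize $\hhpi[1]\oplus D^1_{\bas,\pi}$ and transfer the brackets, in the sense of Baez--Crans type two-term $L_\infty$-algebras, fibred over $\bbT_S$.

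\emph{Strict crossed-module structure.} Assume a strict dg-Lie model of the stabilizer compatible with~$j_\pi$. The action of $D^1_{\bas,\pi}$ on $\hhpi$ is
\[
(D,\eta)\cdot x:=\delta(x), \qquad D=(\theta,\delta,\xi).
\]
I would verify the two Peiffer identities.

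\emph{Equivariance.} We need $j_\pi(\delta x)=[(D,\eta),j_\pi(x)]$. The first component reduces to
\[
\iota_\pi(\delta x)=[D,\iota_\pi x],
\]
which follows from \eqref{eq:lie-derivation-cmp} and \eqref{eq:anchor-compatibility-cmp}. The $\eta$-component is the delicate point: it should hold only up to homotopy. This is precisely why the statement hypothesizes a compatible strict model, and under that hypothesis I would take it as part of the compatibility.

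\emph{Peiffer identity.} We need $j_\pi(x)\cdot y=[x,y]$, which holds because $\operatorname{ad}_x y=[x,y]$. The Leibniz rules over $A$ come from \eqref{eq:first-order-leibniz-cmp}.

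\emph{Effectivity and isotropy.} For effectivity, observe that the image of a crossed-module map $j$ is a Lie ideal by the Peiffer identities. Hence, if $j_\pi$ is injective, the quotient $\mathfrak u_\pi$ inherits a dg-Lie structure. The anchor descends because it kills $j_\pi(\hhpi)$. The quotient is then quasi-isomorphic to the totalization, since a monomorphic two-term complex is resolved by its cokernel; this yields a strict effective controller.

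For isotropy, $\ker(j_\pi)$ is central in $\hhpi$: if $j_\pi(x)=0$ then
\[
[x,y]=j_\pi(x)\cdot y=0 .
\]
It is also fixed by the $D^1_{\bas,\pi}$-action by equivariance. This is the standard argument that the kernel of a crossed module is central and invariant, so the crossed controller must be retained and $\ker(j_\pi)$ is its residual stabilizer.

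The main obstacle is the equivariance of the $\eta$-component in the strict setting. I would first try to show $\eta_{\delta x}-\Lieder_D\eta_x$ equals the bracket term dictated by the transferred $L_\infty$ structure. Failing that, I would rely on the compatibility hypothesis explicitly.
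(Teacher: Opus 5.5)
Most of your argument matches the paper's. The anchor is the basic symbol, with the cone variable in its kernel. The map $j_\pi$ comes from Definition~\ref{def:ham-presentation-cmp}. The Peiffer identities come from the $\delta$-action on $\hhpi$ together with the inner null-homotopies. The image of $j_\pi$ is a vertical ideal, and $\ker(j_\pi)$ is central by the second Peiffer identity. Relying on the compatibility hypothesis for the $\eta$-component of equivariance is legitimate, since the paper invokes the coherent null-homotopy at exactly that point. Your remark that an injective $j_\pi$ has cokernel quasi-isomorphic to the totalization is a useful addition: it shows that $\mathfrak u_\pi$ presents the same object, and that without injectivity the quotient loses the isotropy $\ker(j_\pi)$.

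The step that fails is your justification of the first assertion. A morphism of derived Lie objects $j_\pi:\hhpi\to D^1_{\bas,\pi}(\hhpi/S)$ with zero composite to $\bbT_S$ does not by itself determine a two-term $L_\infty$-algebroid. There is also nothing to ``transfer'', because the cofibre of a Lie morphism carries no induced bracket in general. A Baez--Crans structure with $l_1=j_\pi$ and the given bracket in degree zero also needs a mixed bracket $l_2(u,x)\in\hhpi$ with $j_\pi(l_2(u,x))=[u,j_\pi x]$. In particular $j_\pi(\hhpi)$ must be an ideal, so the inclusion of a non-ideal Lie subalgebra (with $S$ a point) is a counterexample to the principle you invoke.

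The $L_\infty$ statement therefore has to use the action data that you only introduce afterwards, in the strict case. That is, take the $\delta$-action of $D^1_{\bas}$ on $\hhpi$ pulled back to the stabilizer, with the $\eta$-component of equivariance holding up to coherent homotopy and these homotopies supplying the higher bracket components. This is what the paper means by the Peiffer identities ``including their higher $L_\infty$ analogues''.

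A cleaner repair uses computations you already have:
\begin{enumerate}
\item The identities $\iota_\pi(\delta x)=[D,\iota_\pi x]$ and $\iota_\pi(x)\cdot y=[x,y]$ show that $[\hhpi\xrightarrow{\iota_\pi}D^1_{\bas}(\hhpi/S)]$ is a strict crossed module presenting $\Uder(\mathcal F_\pi/S)$.
\item Apply Lemma~\ref{lem:homotopy-stabilizer-cmp} to the action of this two-term object. Its displacement on the $\hhpi[1]$-summand is $x\mapsto\eta_x$.
\item The resulting $L_\infty$ stabilizer then has underlying complex the totalization of $j_\pi$, as in Theorem~\ref{thm:intrinsic-controller-cmp}.
\end{enumerate}

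There is also a small slip. Equivariance only shows that $\ker(j_\pi)$ is \emph{invariant} under $D^1_{\bas,\pi}$, not that it is fixed by it. The induced action factors through the cokernel of $j_\pi$ and need not be trivial. Your closing phrase ``central and invariant'' is the correct statement.
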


\begin{proof}
The homotopy stabilizer is formed in the localized category of derived Lie objects carrying an anchor to~$\bbT_S$.  The basic-symbol anchor is compatible with brackets, and the cone component has zero symbol, so the homotopy fibre inherits an anchor.  By Definition~\ref{def:ham-presentation-cmp}, the inner data define the morphism~$j_\pi$ into the homotopy fibre: in a strict cone chart the chain-map condition is exactly \eqref{eq:inner-nullhomotopy-chain-cmp}.  The ordinary Peiffer identities for the tangent-inner action, together with the coherent null-homotopy of the Poisson displacement, give the Peiffer identities for the stabilizer crossed module, including their higher $L_\infty$ analogues.  This proves the crossed structure.

In a strict model, the image of a crossed-module morphism is a vertical Lie ideal.  If $j_\pi$ is a monomorphism, quotienting by this ideal gives the dg-Lie algebroid \eqref{eq:effective-controller-cmp}.  In any crossed module, an element of the kernel acts trivially by the second Peiffer identity; hence $\ker(j_\pi)$ is central isotropy.  Killing it by an ordinary quotient would lose automorphisms of unfoldings, which is why \eqref{eq:controller-cmp} is the invariant object.
\end{proof}

\begin{remark}[Classical and field-theoretic meaning]\label{rem:classical-controller-meaning-cmp}
For an ordinary Poisson family, a degree-zero point of the stabilizer can be represented by a projectable vector field $Y$ and a relative correction $V$ such that $\Lieder_{Y+V}\pi=0$.  After quotienting by the cotangent inner action, the effective controller records projectable Poisson vector fields modulo Hamiltonian tangent fields.  This is precisely the target-side infinitesimal symmetry that acts on the degree-one symplectic AKSZ target $T^*[1](X/S)$ and hence on the Poisson sigma model developed in Section~\ref{sec:psm}.  The crossed formulation retains the Hamiltonian isotropy that would otherwise be invisible after truncation.
\end{remark}

\subsection{Intrinsicity, admissibility, and geometric sources}\label{subsec:intrinsic-admissibility}

The construction above was written in a Hamiltonian chart in order to display the cone variable and the inner map.  We now separate this computational model from the invariant object.  The key point is that both the ordinary transverse controller and its action on the Poisson datum are intrinsic; the Poisson controller is their homotopy fibre.

\begin{definition}[Controller-admissibility]\label{def:admissibility-cmp}
A relative shifted Poisson family $(X/S,\pi)$ is \emph{locally controller-admissible} if the following data exist locally for the smooth topology.
\begin{enumerate}[label=\textup{(\roman*)},leftmargin=2.2em]
\item A Hamiltonian derived foliation $\mathcal F_\pi/S$ and its intrinsic transverse controller $\Uder(\mathcal F_\pi/S)$.
\item An action of $\Uder(\mathcal F_\pi/S)$ on the formal Poisson deformation groupoid at~$\pi$, whose derivative is the intrinsic infinitesimal variation morphism
\begin{equation}\label{eq:intrinsic-variation-cmp}
 \nu^{\mathrm{int}}_\pi:
 \Uder(\mathcal F_\pi/S)
 \longrightarrow
 \bbT^{\mathrm{ext}}_\pi\Pois_n(X/S).
\end{equation}
\item Cofibrant Hamiltonian charts which represent the foliation controller, the action morphism, and the inner null-homotopies of Definition~\ref{def:ham-presentation-cmp}, and which lie in the CE-presentable class to which the local transverse-controller classification theorem applies.
\end{enumerate}
It is \emph{globally controller-admissible} if these local objects and morphisms satisfy smooth (hyper)descent and yield a derived Lie algebroid over~$S$ with anchor to~$\bbT_S$.  It is \emph{quantization-admissible} after it is additionally equipped with the filtered quantum extension of Section~\ref{sec:quantum-principle}.
\end{definition}

This terminology packages the geometric scope of the paper.  Strict Hamiltonian charts are not additional global structures on $(X/S,\pi)$: they are local presentations used to compute the intrinsic objects in \eqref{eq:intrinsic-variation-cmp}.

\begin{theorem}[Intrinsic controller and presentation independence]\label{thm:intrinsic-controller-cmp}
Let $(X/S,\pi)$ be locally controller-admissible.  Its intrinsic Poisson transverse controller is the tangent derived Lie object of the homotopy stabilizer of~$\pi$ under $\Uder(\mathcal F_\pi/S)$.  On underlying tangent complexes it is represented by
\begin{equation}\label{eq:intrinsic-controller-cmp}
 \uupi^{\mathrm{int}}
 \simeq\hofib\!\left(
 \Uder(\mathcal F_\pi/S)
 \xrightarrow{\nu^{\mathrm{int}}_\pi}
 \bbT^{\mathrm{ext}}_\pi\Pois_n(X/S)
 \right),
\end{equation}
with its induced two-term $L_\infty$-algebroid structure over~$\bbT_S$.

Every Hamiltonian chart represents \eqref{eq:intrinsic-controller-cmp} by the crossed controller \eqref{eq:controller-cmp}.  Equivalent Hamiltonian charts, including their actions on polyvectors and coherent inner null-homotopies, therefore produce equivalent controllers.  If the family is globally controller-admissible, the local fibres descend to a global controller over~$S$.
\end{theorem}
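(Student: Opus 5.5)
The plan is to prove the three assertions of Theorem~\ref{thm:intrinsic-controller-cmp} in order: the intrinsic identification, the chart comparison, and descent.

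\textbf{Step 1: the intrinsic identification.} Apply Lemma~\ref{lem:homotopy-stabilizer-cmp} in its invariant form.
\begin{itemize}
\item Take $\mathfrak a$ to be the derived Lie algebroid $\Uder(\mathcal F_\pi/S)$ and $\mathfrak p=F^1\mathfrak{pol}_n$ with Maurer--Cartan element $m=\pi$.
\item Use the action supplied by Definition~\ref{def:admissibility-cmp}(ii), whose derivative is $\nu^{\mathrm{int}}_\pi$.
\item The lemma identifies the tangent complex of the homotopy stabilizer with $\hofib(\nu^{\mathrm{int}}_\pi)$, because tangent complexes of formal moduli problems commute with homotopy limits in characteristic zero.
\item The $L_\infty$-algebroid structure over $\bbT_S$ is inherited as in the proof of Proposition~\ref{prop:controller-structure-cmp}. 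The anchor factors through $\Uder(\mathcal F_\pi/S)\to\bbT_S$, and the cone summand has zero symbol. So the homotopy fibre, formed in anchored derived Lie objects over $\bbT_S$, agrees with the fibre formed in plain derived Lie algebras.
\end{itemize}

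\textbf{Step 2: chart comparison.} Show that every Hamiltonian chart represents \eqref{eq:intrinsic-controller-cmp}.
\begin{itemize}
\item By Definition~\ref{def:admissibility-cmp}(iii), a chart presents $\Uder(\mathcal F_\pi/S)$ by the two-term complex $[\hhpi\xrightarrow{\iota_\pi}D^1_{\bas}(\hhpi/S)]$. This is the CE-presentable case of the transverse-controller theorem of \cite{CorreaUnfoldings}.
\item By Proposition~\ref{prop:variation-map-cmp}, it presents $\nu^{\mathrm{int}}_\pi$ by $\nu_\pi$ on the degree-zero term.
\item On the degree $-1$ term $\hhpi$, the map into $\mathfrak g^{\mathrm{ext}}_\pi[1]$ is the coherent null-homotopy $\eta$ of Definition~\ref{def:ham-presentation-cmp}(ii).
\item Hence the intrinsic variation is presented by a map out of a two-term complex, given by $\nu_\pi$ and the homotopy $\eta$. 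Its homotopy fibre is the total complex of the square
\[
\hhpi\to D^1_{\bas}(\hhpi/S)\to\mathfrak g^{\mathrm{ext}}_\pi[1].
\]
\item Forming the fibre of the second arrow first gives $D^1_{\bas,\pi}(\hhpi/S)$. The map induced from $\hhpi$ is exactly $j_\pi$ of \eqref{eq:inner-cone-map-cmp}, using \eqref{eq:inner-nullhomotopy-chain-cmp}.
\item This is an iterated-fibre (pasting) identity, $\hofib(\Cone\to T)\simeq\Cone(\hhpi\to\hofib)$. It yields \eqref{eq:controller-cmp} on underlying complexes.
\item Brackets match up to $L_\infty$-equivalence by homotopy transfer, citing \cite{FiorenzaManetti}, since both sides present the same formal moduli problem.
\end{itemize}

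\textbf{Step 3: presentation independence and descent.}
\begin{itemize}
\item Presentation independence follows formally. An equivalence of charts compatible with the actions and the null-homotopies induces a weak equivalence of the diagrams above, and homotopy fibres preserve weak equivalences.
\item For the global statement, the intrinsic description in Step~1 is functorial in smooth maps, since $\Uder$, $\bbT^{\mathrm{ext}}_\pi$ and $\nu^{\mathrm{int}}_\pi$ are assumed to satisfy hyperdescent.
\item Homotopy fibres commute with the homotopy limits computing global sections. Hence the local fibres glue to a derived Lie algebroid over $S$.
\end{itemize}

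\textbf{Main obstacle.} I expect the hardest step to be the coherence part of Step~2: showing that the null-homotopy $\eta$, together with its higher $L_\infty$ components, really presents the restriction of $\nu^{\mathrm{int}}_\pi$ to the inner part as a homotopy of \emph{Lie} maps, and not merely of complexes. I would try to reduce this to the universal property of the homotopy fibre, already built into Definition~\ref{def:ham-presentation-cmp}(iii). The comparison then happens inside the $\infty$-category of formal moduli problems, where only tangent complexes need to be checked to be equivalences.
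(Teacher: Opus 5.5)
Your proposal is correct and follows essentially the same route as the paper. The paper also presents $\Uder(\mathcal F_\pi/S)$ by the CE chart $[\hhpi\to D^1_{\bas}(\hhpi/S)]$ via \cite{CorreaUnfoldings}, takes the homotopy fibre of $\nu_\pi$ together with the inner null-homotopies $\eta_x$ (so $D^1_{\bas}$ becomes $D^1_{\bas,\pi}$ and $\iota_\pi$ becomes $j_\pi$), and gets presentation independence and descent from invariance of homotopy fibres and their commutation with totalization. Your explicit pasting identity $\hofib\bigl(\Cone(\iota_\pi)\to\mathfrak g^{\mathrm{ext}}_\pi[1]\bigr)\simeq\Cone(j_\pi)$ spells out a step the paper states in one line. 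The Lie-level coherence you flag is handled in the paper as you suggest, by the chart data of Definition~\ref{def:ham-presentation-cmp}(iii).
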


\begin{proof}
Choose a cofibrant Hamiltonian chart.  The comparison between Chevalley--Eilenberg presentations and the intrinsic graded-mixed controller identifies
\[
 \Uder(\mathcal F_\pi/S)
 \simeq
 \bigl[
   \hhpi\longrightarrow D^1_{\bas}(\hhpi/S)
 \bigr]
\]
in the localized category of derived Lie algebroids; this is the presentation-independence theorem for transverse controllers of derived foliations \cite{CorreaUnfoldings}.  By local controller-admissibility, the intrinsic action morphism is represented in this chart by Proposition~\ref{prop:variation-map-cmp}, and its restriction to the tangent-inner part is equipped with the null-homotopies~$\eta_x$.  Taking its homotopy fibre therefore replaces $D^1_{\bas}$ by $D^1_{\bas,\pi}$ and replaces the inner map by~$j_\pi$.  This yields \eqref{eq:controller-cmp}.

Now let two cofibrant Hamiltonian charts represent the same local intrinsic data.  Their foliation controllers are equivalent, their actions on the extended Poisson complex correspond, and the coherent inner maps correspond by assumption.  Homotopy fibres are invariant under equivalence of arrows in an $\infty$-category, so the two crossed controllers are equivalent.  This proves presentation independence.

Finally, on a smooth hypercover the local action morphisms form a cosimplicial arrow.  The totalization of its levelwise homotopy fibres is the homotopy fibre of the totalized arrow, because both homotopy fibres and totalizations are limits.  Thus no additional ``fibre commutes with descent'' hypothesis is required once the intrinsic action morphism itself satisfies hyperdescent.  The resulting fibre is the asserted global controller.
\end{proof}

Henceforth $\uupi$ denotes the intrinsic object of Theorem~\ref{thm:intrinsic-controller-cmp}; formula~\eqref{eq:controller-cmp} is its expression in a Hamiltonian chart.

\begin{proposition}[Basic geometric sources]\label{prop:admissible-sources-cmp}
The following classes provide controller-admissible Poisson families under the stated perfectness and descent hypotheses.
\begin{enumerate}[label=\textup{(\roman*)},leftmargin=2.2em]
\item Every smooth family of ordinary Poisson manifolds or smooth Poisson schemes is locally controller-admissible.  Its Hamiltonian chart is the cotangent Lie algebroid
\(
 \Omega^1_{X/S}\xrightarrow{\pi^\sharp}\bbT_{X/S}
\)
with Koszul bracket.
\item Every non-degenerate shifted Poisson family is controller-admissible on the shifted symplectic side.  If $\omega$ is the corresponding shifted symplectic form, the intrinsic controller is the homotopy stabilizer of~$\omega$ under transverse Lie derivatives.
\item Under the usual perfectness assumptions, a shifted cotangent stack $T^*[n](Y/S)$ is controller-admissible through its canonical $n$-shifted symplectic structure.
\item Let $M$ be an admissible oriented source.  If $(Y/S,\omega)$ is a controller-admissible shifted symplectic target and the mapping stack exists with the Pantev--To\"en--Vaqui\'e--Vezzosi transgressed form, then $\Map_S(M\times S,Y)$ is controller-admissible.  The induced controller is the transgression constructed in Section~\ref{sec:aksz-cmp}.
\end{enumerate}
In each case, local admissibility upgrades to global admissibility whenever the corresponding perfect controller and action morphism satisfy smooth hyperdescent and the required base pushforward exists.
\end{proposition}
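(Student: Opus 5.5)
We will verify the three conditions of Definition~\ref{def:admissibility-cmp} case by case. In each case we exhibit a Hamiltonian chart, identify the intrinsic action morphism \eqref{eq:intrinsic-variation-cmp}, and supply the inner null-homotopies. Global admissibility will then follow formally from the descent argument in the proof of Theorem~\ref{thm:intrinsic-controller-cmp}: homotopy fibres commute with totalizations, so once the perfect controller and the action morphism satisfy smooth hyperdescent, the local stabilizers glue. This is exactly the hypothesis stated in the last sentence of the proposition.

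\emph{Case (i).} Take the cotangent Lie algebroid $\Omega^1_{X/S}\xrightarrow{\pi^\sharp}\bbT_{X/S}$ with its Koszul bracket. Its Chevalley--Eilenberg algebra is the Lichnerowicz complex, and this presents the Hamiltonian foliation $\mathcal F_\pi/S$. Locally $X\to S$ is smooth, so we may choose étale coordinates, and then basic derivations are represented by projectable vector fields $\theta$ together with the induced action $\delta=\Lieder_\theta$ on $1$-forms. Such $\theta$ acts on relative polyvectors by the Schouten bracket; this is a filtered dg-Lie derivation, which gives datum (i). Proposition~\ref{prop:variation-map-cmp} then provides $\nu_\pi$, whose symbol is $D\mapsto\Lieder_\theta\pi$. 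The inner null-homotopies are $\eta_\alpha=-\pi^\sharp\alpha$, as in Example~\ref{ex:cotangent-nullhomotopy-cmp}. Because the chart is ungraded with zero differential, the chain identity \eqref{eq:inner-nullhomotopy-chain-cmp} reduces to that example. The morphism $j_\pi$ is strict, and its bracket compatibility is the identity $\pi^\sharp[\alpha,\beta]_\pi=[\pi^\sharp\alpha,\pi^\sharp\beta]$. The remaining condition is that the chart lies in the CE-presentable class of \cite{CorreaUnfoldings}. This holds because $\Omega^1_{X/S}$ is a vector bundle, hence cofibrant and perfect.

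\emph{Cases (ii) and (iii).} For a non-degenerate structure we transport everything through the Poisson--symplectic comparison of \cite{CPTVV}. Non-degeneracy makes the map $\pi^\sharp:\bbL_{X/S}[n]\to\bbT_{X/S}$ an equivalence. We would use this to identify the extended Poisson complex $\mathfrak g^{\mathrm{ext}}_\pi$ with the truncated de Rham complex of closed forms shifted by $\omega$, in such a way that $\Lieder_D\pi$ corresponds to $\Lieder_D\omega$. The homotopy fibre of $\nu_\pi$ is then the homotopy stabilizer of $\omega$. The Hamiltonian chart is $\bbL_{X/S}[n]\simeq\bbT_{X/S}$, and its inner null-homotopies are the contraction identities $\Lieder_{X_\alpha}\omega=d\iota_{X_\alpha}\omega$. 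Case (iii) is a special case of (ii): the canonical form on $T^*[n](Y/S)$ is non-degenerate under the perfectness of $\bbL_{Y/S}$. Its controller can also be written explicitly via cotangent lifts of projectable derivations on $Y$.

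\emph{Case (iv), the main obstacle.} Here we need the AKSZ transgression of Section~\ref{sec:aksz-cmp}. The plan is to pull back the target controller along $\ev$ and push it forward via $\int_M$. The orientation of $M$ makes $\int_M$ compatible with Lie derivatives along basic derivations, so it carries the stabilizer of $\omega$ into the stabilizer of $\omega_M$, and it carries Hamiltonian charts to Hamiltonian charts. We would first check that perfectness is preserved, which needs $M$ to be $\mathcal O$-compact, and then use (ii) on the mapping stack. The hard step is showing that this transgressed chart represents the intrinsic controller of the mapping stack, rather than only mapping to it. We would argue this by non-degeneracy of $\omega_M$ combined with (ii), and defer the details to Section~\ref{sec:aksz-cmp}.
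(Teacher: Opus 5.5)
Your case-by-case plan matches the paper's proof. For (i) you use the cotangent algebroid with the Koszul bracket, the null-homotopy $\eta_\alpha=-\pi^\sharp\alpha$ of Example~\ref{ex:cotangent-nullhomotopy-cmp}, and the anchor identity for bracket coherence. For (ii) you use the CPTVV Poisson--symplectic comparison, with the de Rham foliation as Hamiltonian foliation. For (iii) you use the canonical non-degenerate form on $T^*[n](Y/S)$, and for (iv) transgression as in Section~\ref{sec:aksz-cmp}. Descent is formal because homotopy fibres and totalizations are both limits. Two points need correcting.

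\emph{Your description of basic derivations in (i) is wrong.} A pair $(\theta,\Lieder_\theta)$ satisfies the anchor condition \eqref{eq:anchor-compatibility-cmp} only when $\Lieder_\theta\pi=0$. This is because $[\theta,\pi^\sharp\alpha]=\pi^\sharp(\Lieder_\theta\alpha)+(\Lieder_\theta\pi)^\sharp\alpha$. Similarly, $\Lieder_\theta$ preserves the Koszul bracket only up to the term $[\alpha,\beta]_{\Lieder_\theta\pi}$.

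Taken literally, your description makes every basic derivation Poisson, so $\nu_\pi$ would vanish identically. That is inconsistent with your use of $\nu_\pi(D)=\Lieder_\theta\pi$ in the next sentence. The inner derivations \eqref{eq:inner-map-cmp} already fall outside your description: $\operatorname{ad}_\alpha=[\alpha,-]_\pi$ differs from $\Lieder_{\pi^\sharp\alpha}$ by $\beta\mapsto-\iota_{\pi^\sharp\beta}d\alpha$ in the usual convention.

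In general $\delta$ is $\Lieder_\theta$ plus an endomorphism correction; in the symplectic case, for example, $\delta=(\pi^\sharp)^{-1}\circ[\theta,-]\circ\pi^\sharp$. It also need not be unique when $\pi$ is degenerate, e.g.\ for $\pi=0$. Nothing downstream needs the explicit form of $\delta$. As the paper says, the action on relative polyvectors factors through the projectable symbol $\theta$, and that is all datum (i) of Definition~\ref{def:ham-presentation-cmp} requires. So simply drop the identification $\delta=\Lieder_\theta$.

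\emph{The step you single out as hard in (iv) should not be attempted.} The proposition does not claim that the transgressed chart represents the intrinsic controller of $\mathcal M_Y$, and this is false in general. On vertical kernels the induced map is the transgression $\Tr_M$ of \eqref{eq:transgressed-deformation-complex-cmp}. It lowers degree by $d$ and is not an equivalence in general; it is a transgression map between closed-form-type complexes of $Y$ and of the mapping stack.

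What Section~\ref{sec:aksz-cmp} provides is a morphism of stabilizer controllers $\mathbb U_\omega(Y/S)\to\mathbb U_{\omega_M}(\mathcal M_Y/S)$ that carries flat splittings to flat splittings; see Proposition~\ref{prop:symplectic-stabilizers-transgress-cmp} and Theorem~\ref{thm:aksz-symplectic-unfoldings-cmp}. That morphism is the meaning of ``the induced controller is the transgression'', and it is all the paper's proof uses.

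Admissibility of the mapping stack itself follows as you indicate: apply (ii) to the non-degenerate PTVV form $\omega_M$. This is a more direct justification of admissibility than the paper's appeal to transgression alone, which by itself only produces the morphism.
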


\begin{proof}
For~\textup{(i)}, the cotangent Lie algebroid is defined by the Koszul bracket and the anchor $\pi^\sharp$ \cite{Koszul}.  Its Chevalley--Eilenberg algebra presents the Hamiltonian leaf geometry.  A projectable derivation acts on relative polyvectors through its symbol.  Example~\ref{ex:cotangent-nullhomotopy-cmp} supplies the inner cone homotopy explicitly, and the fact that $\pi^\sharp$ is a Lie-algebroid anchor supplies the bracket coherence.  Hence the local data of Definition~\ref{def:admissibility-cmp} exist without any regularity assumption on the rank of~$\pi$.

For~\textup{(ii)}, the non-degenerate Poisson--symplectic equivalence identifies a neighbourhood of~$\pi$ in shifted Poisson structures with a neighbourhood of the shifted closed two-form~$\omega$ \cite{CPTVV,PTVV}.  The de Rham foliation is the Hamiltonian foliation, and the homotopy stabilizer of $\omega$ gives the intrinsic controller; Section~\ref{sec:classification-cmp} proves its comparison with~$\uupi$.

Statement~\textup{(iii)} follows from the canonical shifted symplectic structure on shifted cotangent stacks \cite{CalaqueCotangent}.  For~\textup{(iv)}, evaluation pullback and integration along~$M$ commute with differentials and Lie derivatives, so they carry the target stabilizer and its flat splittings to the mapping stack; this is proved later on in Theorem~\ref{thm:aksz-symplectic-unfoldings-cmp}.  The final assertion is the global part of Definition~\ref{def:admissibility-cmp}.
\end{proof}

\begin{remark}\label{rem:controller-scope-cmp}
We insist that the paper does not claim that every shifted Poisson derived stack possesses a Hamiltonian foliation with the descent properties above.  The theorem is instead intrinsic and unconditional on the controller-admissible class, which already contains the ordinary Poisson, shifted symplectic, shifted cotangent, and AKSZ mapping-stack geometries used later.  The distinction is important: admissibility specifies the domain of the theory, while a strict Hamiltonian presentation is merely a chart inside that domain.
\end{remark}

\section{Classification of shifted Poisson unfoldings}\label{sec:classification-cmp}

The controller constructed in Section~\ref{sec:controller} is useful
only if it recovers the geometric notion of an unfolding familiar from
the theory of singular holomorphic foliations~\cite{Suwa81,Suwa83} and
its transverse Lie-algebroid reformulations
\cite{Quallbrunn17,CorreaMolinuevoQuallbrunn23,CorreaUnfoldings}.  The
underlying equivalence between transversal unfoldings of a derived
foliation and flat splittings of its transverse controller is the
transverse-controller theorem for derived foliations
\cite{CorreaUnfoldings}.  The purpose of this section is to prove the
shifted-Poisson refinement of that theorem.

The idea is simple but homotopically important.  The ordinary transverse
controller classifies flat lifts of parameter directions preserving the
Hamiltonian leaf geometry.  A Poisson unfolding is more restrictive: the
lifted action must also preserve the Poisson Maurer--Cartan element, not
strictly but with a specified coherent null-homotopy.  This is exactly
what the homotopy stabilizer \(\uupi\) records.  Mapping the tangent Lie
algebroid of the base into this stabilizer gives the desired Poisson
moduli problem.

We first recall the foliation-controller theorem in the precise form
used here and then take its homotopy fibre over the zero Poisson
variation.  We then explain the effective and non-effective strict
models, prove controller-compatible base change, and identify the
controller in the non-degenerate shifted symplectic case.  These results
form the bridge from the infinitesimal construction of
Section~\ref{sec:controller} to the transport and quantization theories
below.

\subsection{Flat splitting spaces and the classification theorem}\label{subsec:classification-main-cmp}

We begin by fixing the invariant meaning of a flat splitting.  This avoids conflating an ordinary section of an anchor with a morphism of derived Lie algebroids.

\begin{definition}[Derived space of flat splittings]\label{def:derived-flat-splittings-cmp}
Let
\(
 q:\mathbb U\rightarrow\bbT_S
\)
be a derived Lie algebroid over a smooth or derived base~$S$.  Its \emph{derived space of flat splittings} is
\begin{equation}\label{eq:derived-flat-splittings-cmp}
 \Flat^{\der}_S(\bbT_S,\mathbb U)
 :=\Map_{\mathrm{LieAlgd}(S)_{/\bbT_S}}(\bbT_S,\mathbb U),
\end{equation}
where the source is equipped with the identity anchor.  If $\mathbb U$ is represented by a crossed dg-Lie algebroid or a two-term $L_\infty$-algebroid, the right-hand side denotes the corresponding derived mapping space in the localized category of such objects \cite{Nuiten,Noohi}.

In a strict effective dg-Lie model, a point is a chain map
\(
 s:\bbT_S\to\mathbb U
\)
satisfying
\[
 q\circ s=\id_{\bbT_S},
 \qquad
 [s(\xi),s(\zeta)]=s([\xi,\zeta]).
\]
Thus a strict point is an anchor splitting with zero bracket curvature.  In a crossed or $L_\infty$ model, the same statement includes the higher components and coherent homotopies of an $L_\infty$-morphism.
\end{definition}

The relation between flat splittings and unfoldings is the content of the main result of \cite{CorreaUnfoldings}.

\begin{theorem}[Transverse-controller for derived foliations]
\label{thm:foliation-controller-recalled-cmp}
Let \(\mathcal F/S\) be a controller-admissible relative derived
foliation.  Then transversal unfoldings of \(\mathcal F/S\) are
classified by flat splittings of its transverse controller:
\[
\Unf^{\tr}_{\mathcal F}(X/S)
 \simeq
 \Flat^{\der}_S
 \bigl(
  \bbT_S,
  \Uder(\mathcal F/S)
 \bigr).
\]
\end{theorem}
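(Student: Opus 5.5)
The plan is to recall the argument of \cite{CorreaUnfoldings} in the present notation: first match the two sides locally, in a Chevalley--Eilenberg chart, and then glue by descent. Both sides will be presented as formal moduli problems, equivalently as derived mapping spaces, over the base.

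First, a transversal unfolding of $\mathcal F/S$ is, by definition, a derived foliation $\widetilde{\mathcal F}$ on $X$ (absolute over $\kk$) that restricts to $\mathcal F/S$ and is transverse to the fibres. Transversality means that the anchor of $\widetilde{\mathcal F}$, composed with the transverse-symbol morphism $q$, induces an equivalence of the conormal data with $\bbT_S$. In a cofibrant CE-presentable chart $\rho:\mathfrak h\to\bbT_{A/B}$, such a $\widetilde{\mathcal F}$ is presented by an extension $0\to\mathfrak h\to\widetilde{\mathfrak h}\to A\otimes_B\bbT_{B/\kk}\to 0$ of dg-Lie algebroids over $\bbT_{A/\kk}$. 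A choice of splitting of the underlying $A$-modules sends $\xi$ to a triple $(\theta,\delta,\xi)$: the anchor gives $\theta$, and the adjoint action on $\mathfrak h$ gives $\delta$. Equations \eqref{eq:first-order-leibniz-cmp}--\eqref{eq:basic-symbol-cmp} are then precisely the Leibniz, Jacobi and anchor identities of $\widetilde{\mathfrak h}$. Transversality makes $\theta$ projectable. The curvature of the splitting, $[s\xi,s\zeta]-s[\xi,\zeta]$, lands in $\mathfrak h$, and that is exactly the ambiguity recorded by the inner map $\iota$ in the two-term object $[\mathfrak h\to D^1_{\bas}(\mathfrak h/B)]$.

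Second, I will promote this dictionary to an equivalence of spaces. Points are extensions up to equivalence on one side and $L_\infty$-morphisms $\bbT_S\to[\mathfrak h\to D^1_{\bas}]$ over $\bbT_S$ on the other. The linear component of such a morphism is the splitting $s$. The quadratic component, valued in $\mathfrak h$, is the curvature correction. Higher homotopies correspond to changes of splitting by sections of $\mathfrak h$ and to gauge equivalences of extensions. This is the standard correspondence between extensions of Lie algebroids by $\mathfrak h$ and morphisms into the crossed module of outer derivations, run in the localized category of \cite{Nuiten,Noohi}. Using the presentation-independence of $\Uder$, as quoted in the proof of Theorem~\ref{thm:intrinsic-controller-cmp}, the local equivalence is independent of the chart.

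Third, I will globalize. Both $\Unf^{\tr}$ and $\Flat^{\der}_S(\bbT_S,-)$ are limits that satisfy smooth hyperdescent, the latter because it is a mapping space into an object that descends. A cosimplicial equivalence therefore totalizes to an equivalence, exactly as in the last paragraph of the proof of Theorem~\ref{thm:intrinsic-controller-cmp}.

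The main obstacle is the second step when $\mathfrak h$ is not a strict subobject, that is, when the CE presentation has derived structure or $\iota$ has kernel. There one must show that extensions are classified by maps into the crossed module, not into the strict quotient. I would handle this by identifying both sides with the homotopy fibre of $\Map(\bbT_S,B\mathrm{Out})$ over the base point, using the $L_\infty$ model of the two-term algebroid and checking it on Artinian test algebras via the formal-moduli correspondence \cite{Pridham}.
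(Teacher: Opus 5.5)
Your plan matches the argument the paper defers to \cite{CorreaUnfoldings}: the adjoint-symbol dictionary sends a basic lift $\xi\mapsto(\theta,\delta,\xi)$ in an absolute CE chart to a splitting into $[\mathfrak h\to D^1_{\bas}(\mathfrak h/B)]$ with $\mathfrak h$-valued curvature; one then reconstructs the absolute algebroid from these basic data and concludes by presentation independence plus controller descent. The only slip is in your last paragraph: the classifying object must be the crossed module itself (with $\pi_0=\operatorname{coker}\iota$ the outer derivations and $\pi_1=\ker\iota$ the central isotropy), and the fibre should be taken over $\id_{\bbT_S}$ rather than over a base point, since writing $B\,\mathrm{Out}$ literally would discard exactly the isotropy you correctly say must be retained.
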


\begin{proof}[Reference]
This is the transverse-controller classification theorem for derived
foliations; see~\cite{CorreaUnfoldings}.  In strict affine effective CE
models it identifies isomorphism classes of unfoldings with ordinary
flat splittings of the effective quotient controller.  In the
non-effective case the crossed controller retains the central isotropy
and the equivalence is one of the corresponding derived groupoids.  The
strict affine theorem is proved by the adjoint-symbol construction and
by reconstructing the absolute Lie algebroid from the basic part, while
the global statement is obtained by presentation independence and
controller descent.
\end{proof}

Now apply Theorem~\ref{thm:foliation-controller-recalled-cmp} to the
Hamiltonian derived foliation \(\mathcal F_\pi/S\).  Thus the derived
groupoid
\(
 \Unf^{\tr}_{\mathcal F_\pi}(X/S)
\)
of underlying transversal unfoldings is identified with
\(
 \Flat^{\der}_S
 \bigl(
   \bbT_S,
   \Uder(\mathcal F_\pi/S)
 \bigr).
\)
Composing a flat splitting with the intrinsic variation
morphism~\eqref{eq:intrinsic-variation-cmp} gives the
\emph{Poisson variation map}

%Let
%\(
 %\Unf^{\tr}_{\mathcal F_\pi}(X/S)
%\)
%denote the derived groupoid of transversal unfoldings of the Hamiltonian derived foliation.  The transverse-controller theorem for derived foliations of \cite{CorreaUnfoldings} identifies this groupoid with
%\(
 %\Flat^{\der}_S\bigl(\bbT_S,\Uder(\mathcal F_\pi/S)\bigr)
%\)
%for the CE-presentable class entering Definition~\ref{def:admissibility-cmp}: 
%\[
 %\Unf^{\tr}_{\mathcal F_\pi}(X/S) \simeq  \Flat^{\der}_S\bigl(\bbT_S,\Uder(\mathcal F_\pi/S)\bigr).
%\] 
%Composing a flat splitting with the intrinsic variation morphism~\eqref{eq:intrinsic-variation-cmp} gives the \emph{Poisson variation map}
\begin{equation}\label{eq:variation-on-splittings-cmp}
 \operatorname{Var}_\pi:
 \Unf^{\tr}_{\mathcal F_\pi}(X/S)
 \longrightarrow
 \Map_{\cO_S\text{-}\mathrm{Mod}}
 \bigl(\bbT_S,\bbT^{\mathrm{ext}}_\pi\Pois_n(X/S)\bigr).
\end{equation}
The target is pointed by the zero morphism.

\begin{definition}[Geometric transversal shifted Poisson unfolding]\label{def:poisson-unfolding-cmp}
Let $(X/S,\pi)$ be locally controller-admissible.  The derived groupoid of \emph{geometric transversal $n$-shifted Poisson unfoldings} is the homotopy fibre
\begin{equation}\label{eq:geometric-unfolding-fibre-cmp}
 \Unf^{\tr}_{\Pois_n}(X/S,\pi)
 :=\hofib_{0}(\operatorname{Var}_\pi).
\end{equation}
Thus an object consists of a transversal unfolding of the Hamiltonian derived foliation together with a specified coherent null-homotopy of the induced infinitesimal variation of~$\pi$ in the extended Poisson complex.  The null-homotopy is part of the object; it is not merely the assertion that a cohomology class vanishes.
\end{definition}

\begin{theorem}[Poisson stabilizer classification]\label{thm:classification-cmp}
Let $(X/S,\pi)$ be locally controller-admissible.  After restriction to any admissible Hamiltonian chart, suppressing the restriction notation, there is a natural equivalence of derived groupoids
\begin{equation}\label{eq:local-classification-cmp}
 \Unf^{\tr}_{\Pois_n}(X/S,\pi)
 \simeq
 \Flat^{\der}_S(\bbT_S,\uupi).
\end{equation}

If $(X/S,\pi)$ is globally controller-admissible, the local equivalences are compatible with smooth hyperdescent and give a global equivalence
\begin{equation}\label{eq:global-classification-cmp}
 \Unf^{\tr}_{\Pois_n}(X/S,\pi)
 \simeq
 \Flat^{\der}_S(\bbT_S,\uupi).
\end{equation}
This equivalence is independent of the Hamiltonian presentation.

If in addition a chart is represented by a strict effective fibrant-cofibrant dg-Lie algebroid, $j_\pi$ is a monomorphism, and the chosen truncation of the vertical complex contributes no positive homotopy to degree-zero splittings, then
\begin{equation}\label{eq:effective-pi0-classification-cmp}
 \pi_0\Unf^{\tr,\eff}_{\Pois_n}(X/S,\pi)
 \cong
 \Flat(\bbT_S,\mathfrak u_\pi),
\end{equation}
where the right-hand side is the ordinary set of strict flat anchor sections.  In particular, this applies to the smooth underived affine effective models in Appendix~\ref{app:strict-models}.
\end{theorem}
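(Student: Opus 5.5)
The plan is to compare two homotopy fibres over the same base. By Definition~\ref{def:poisson-unfolding-cmp}, the left-hand side of \eqref{eq:local-classification-cmp} is $\hofib_0(\operatorname{Var}_\pi)$. By Theorem~\ref{thm:intrinsic-controller-cmp}, $\uupi$ is the homotopy fibre of $\nu^{\mathrm{int}}_\pi$ in the localized $\infty$-category of derived Lie algebroids over $\bbT_S$. The core step is to show that $\Map_{/\bbT_S}(\bbT_S,-)$ sends this fibre sequence to a fibre sequence of mapping spaces:
\[
\Flat^{\der}_S(\bbT_S,\uupi)\longrightarrow\Flat^{\der}_S\bigl(\bbT_S,\Uder(\mathcal F_\pi/S)\bigr)\longrightarrow\Map_{\cO_S\text{-}\mathrm{Mod}}\bigl(\bbT_S,\bbT^{\mathrm{ext}}_\pi\Pois_n(X/S)\bigr).
\]
Once this is established, Theorem~\ref{thm:foliation-controller-recalled-cmp} identifies the middle term with $\Unf^{\tr}_{\mathcal F_\pi}(X/S)$. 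We then check that under this identification the second map is $\operatorname{Var}_\pi$ as defined in \eqref{eq:variation-on-splittings-cmp}, which holds by construction because $\operatorname{Var}_\pi$ is composition with $\nu^{\mathrm{int}}_\pi$. Taking fibres over $0$ then gives \eqref{eq:local-classification-cmp}.

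The main obstacle is exactly the fibre-sequence step. $\nu^{\mathrm{int}}_\pi$ is not a morphism of Lie algebroids into $\bbT_S$ with target an abelian algebroid. It is the derivative of an action on a formal moduli problem. I would therefore realize the stabilizer as a genuine limit: form the action Lie algebroid $\Uder\ltimes\mathfrak g^{\mathrm{ext}}_\pi$, i.e. the semidirect product, or its $L_\infty$ version, for the action of Definition~\ref{def:admissibility-cmp}(ii). The orbit map at $\pi$ then becomes a morphism of algebroids over $\bbT_S$, and $\uupi$ is its pullback along the zero section, by Lemma~\ref{lem:homotopy-stabilizer-cmp}. Since $\Map(\bbT_S,-)$ preserves limits, we get a pullback of mapping spaces. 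The remaining point is to identify lifts of a fixed flat splitting $s$ through the semidirect-product fibre with $\Map_{\cO_S}(\bbT_S,\mathfrak g^{\mathrm{ext}}_\pi[1])$, i.e. with null-homotopies of $\nu^{\mathrm{int}}_\pi\circ s$. I would argue this weight by weight using the completeness of the $F^1$-filtration: the lifting problem is a Maurer--Cartan problem in a complete filtered Chevalley--Eilenberg complex whose associated graded is linear, so it reduces to the module mapping space. Strictly speaking, the resulting twisted module structure on $\bbT^{\mathrm{ext}}_\pi$ via $s$ should also be matched with the pointing used in \eqref{eq:variation-on-splittings-cmp}.

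For the global statement, both sides are limits over a smooth hypercover: the left as a fibre of descended objects, the right by the descent part of Theorem~\ref{thm:intrinsic-controller-cmp}. The local equivalences are natural in the chart, so they assemble after commuting fibres with totalizations, exactly as in that proof. Presentation independence follows from Theorem~\ref{thm:intrinsic-controller-cmp} and the invariance of mapping spaces under equivalence.

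For \eqref{eq:effective-pi0-classification-cmp}, I would work in the strict fibrant-cofibrant model. There, points of the derived mapping space are strict crossed-module morphisms $\bbT_S\to[\hhpi\to D^1_{\bas,\pi}]$, and homotopies are gauge transformations by $\hhpi$ together with cone-degree corrections. When $j_\pi$ is injective, the crossed controller is quasi-isomorphic to $\mathfrak u_\pi$ of Proposition~\ref{prop:controller-structure-cmp}. The truncation hypothesis kills the higher components of the $L_\infty$-morphism and any positive homotopy, so $\pi_0$ becomes the set of strict flat anchor sections of $\mathfrak u_\pi$. For the smooth underived case, I would appeal to the explicit strict formulas in Appendix~\ref{app:strict-models}.
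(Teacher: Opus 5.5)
Your architecture matches the paper's. You identify the middle term via Theorem~\ref{thm:foliation-controller-recalled-cmp}, read $\operatorname{Var}_\pi$ as postcomposition with $\nu^{\mathrm{int}}_\pi$, take the fibre over $0$, totalize for descent, and pass to $\mathfrak u_\pi$ for the effective statement. Your descent and truncation steps agree with the paper's. The paper disposes of the central step in one line: the stabilizer's universal property is a homotopy pullback, and $\Map(\bbT_S,-)$ preserves limits.

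You rightly notice that this is not literally a limit statement when the lower corner is $\Map_{\cO_S\text{-}\mathrm{Mod}}(\bbT_S,\bbT^{\mathrm{ext}}_\pi\Pois_n(X/S))$. However, your repair fails at its last step: the claim that lifts of a flat $s$ through the stabilizer reduce ``weight by weight'' to null-homotopies of $\nu^{\mathrm{int}}_\pi\circ s$ in that module mapping space.

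First, the weight filtration linearizes nothing. Since $[F^1,F^1]\subseteq F^1$, the piece $\gr^1$ is the Lie algebra of relative vector fields with its commutator; only $F^2$ is pronilpotent. The weight-one cone components, such as the $Z_\xi$ of Section~\ref{sec:psm}, enter nonlinearly through $[Z_\xi,Z_\zeta]$.

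Second, and decisively, even a linear lifting problem is governed by Chevalley--Eilenberg cochains, not by $\Hom_{\cO_S}(\bbT_S,-)$. The strict cone of Lemma~\ref{lem:homotopy-stabilizer-cmp} is the semidirect product $D^1_{\bas}\ltimes F^1\mathfrak{pol}_n$ twisted by $(0,\pi)$. A Lie-algebroid morphism $\xi\mapsto(s(\xi),\eta_\xi)$ into it must satisfy $d_\pi\eta_\xi=\Lieder_{s(\xi)}\pi$, and also the bracket condition
\[
 \Lieder_{s(\xi)}\eta_\zeta-\Lieder_{s(\zeta)}\eta_\xi+[\eta_\xi,\eta_\zeta]-\eta_{[\xi,\zeta]}=0
\]
(modulo inner terms in the crossed model, plus higher components in general). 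So the fibre over $s$ is a Maurer--Cartan space in $C^\bullet_{\CE}(\bbT_S;\mathfrak g^{\mathrm{ext}}_\pi)$ with differential $d_\pi+d_s$, curved by $\nu^{\mathrm{int}}_\pi\circ s$.

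The discrepancy is visible in the simplest case: abelian coefficients $M$ in degree $0$, with $\nu\circ s=0$ and $\dim S\geq2$. There, lifts are $M$-valued one-forms that are closed for the connection induced by $s$. Null-homotopies in the module mapping space are arbitrary $M$-valued one-forms. Your closing caveat about matching ``the twisted module structure via $s$'' with the pointing is exactly where this surfaces: the differential $d_s$ has no home in $\Map_{\cO_S\text{-}\mathrm{Mod}}$.

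Hence the reduction you propose cannot be proved, and the step must be closed differently. Your semidirect-product model does correctly give the fibre of $\Flat^{\der}_S(\bbT_S,\uupi)\to\Flat^{\der}_S(\bbT_S,\Uder(\mathcal F_\pi/S))$ over $s$ as the CE Maurer--Cartan space above. This is consistent with the paper's own Theorem~\ref{thm:curvature-formal-moduli-cmp}(ii), where deformations of an unfolding are controlled by the full CE complex rather than by $\Hom_{\cO_S}(\bbT_S,-)$.

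To finish, take the null-homotopy in Definition~\ref{def:poisson-unfolding-cmp} (equivalently, the target of $\operatorname{Var}_\pi$) at this Lie-algebroid level, so that the limit argument runs in $\mathrm{LieAlgd}(S)_{/\bbT_S}$. Do not collapse it to $\Map_{\cO_S\text{-}\mathrm{Mod}}$: read literally with the module-level target, the two fibres over a given $s$ already differ in the linear example above.
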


\begin{proof}
For clarity, we separate the argument into the local homotopy-fibre
calculation, descent, and strict truncation.

\emph{Local calculation.}
By the recalled transverse-controller theorem,
Theorem~\ref{thm:foliation-controller-recalled-cmp}, applied to the
Hamiltonian derived foliation \(\mathcal F_\pi/S\), there is a natural
equivalence
\begin{equation}\label{eq:foliation-controller-classification-cmp}
 \Unf^{\tr}_{\mathcal F_\pi}(X/S)
 \simeq
 \Flat^{\der}_S\bigl(\bbT_S,\Uder(\mathcal F_\pi/S)\bigr).
\end{equation}
Under this equivalence, the map~\eqref{eq:variation-on-splittings-cmp}
is induced by postcomposition with the infinitesimal action
morphism~\eqref{eq:intrinsic-variation-cmp}.

By Theorem~\ref{thm:intrinsic-controller-cmp}, $\uupi$ is the homotopy stabilizer of~$\pi$ for this action.  Equivalently, it satisfies the universal homotopy-pullback property that a morphism into $\uupi$ is a morphism into $\Uder(\mathcal F_\pi/S)$ together with a chosen null-homotopy of its Poisson displacement.  Mapping $\bbT_S$ into that homotopy pullback preserves the pullback, because a derived mapping-space functor is a right adjoint and therefore preserves limits.  We obtain a homotopy-cartesian square
\begin{equation}\label{eq:splitting-homotopy-pullback-cmp}
\begin{tikzcd}[column sep=large,row sep=large]
 \Flat^{\der}_S(\bbT_S,\uupi)
 \arrow[r]
 \arrow[d]
 &
 \Flat^{\der}_S\bigl(\bbT_S,\Uder(\mathcal F_\pi/S)\bigr)
 \arrow[d,"\operatorname{Var}_\pi"]
 \\
 \{0\}
 \arrow[r]
 &
 \Map_{\cO_S\text{-}\mathrm{Mod}}
 \bigl(\bbT_S,\bbT^{\mathrm{ext}}_\pi\Pois_n(X/S)\bigr).
\end{tikzcd}
\end{equation}
The lower-left point includes the chosen null-homotopy data.  Comparing~\eqref{eq:splitting-homotopy-pullback-cmp} with Definition~\ref{def:poisson-unfolding-cmp} proves~\eqref{eq:local-classification-cmp}.  Notice that no additional flatness equation is imposed after taking the fibre: flatness is already encoded by working with morphisms of derived Lie algebroids in~\eqref{eq:derived-flat-splittings-cmp}.

\emph{Descent and presentation independence.}
For a globally controller-admissible family, the local unfolding groupoids, the local controllers, and the action morphisms form cosimplicial objects over a smooth hypercover.  The local equivalences~\eqref{eq:local-classification-cmp} commute with all coface and codegeneracy morphisms.  Totalizing the homotopy-cartesian squares~\eqref{eq:splitting-homotopy-pullback-cmp} therefore gives the global equivalence~\eqref{eq:global-classification-cmp}.  The fact that totalization commutes with the stabilizer fibre is formal: both are limits.  Independence of the chosen Hamiltonian chart is Theorem~\ref{thm:intrinsic-controller-cmp} together with invariance of derived mapping spaces under equivalence of the target.

\emph{Effective truncation.}
Under the additional strictness and truncation hypotheses, the crossed controller is represented by the quotient dg-Lie algebroid $\mathfrak u_\pi$, and connected components of the derived mapping space are represented by strict degree-zero Lie-algebroid sections.  Their defining equation is precisely zero curvature.  This proves~\eqref{eq:effective-pi0-classification-cmp}.
\end{proof}

Theorem~\ref{thm:classification-cmp} can be read as the
shifted-Poisson stabilizer refinement of
Theorem~\ref{thm:foliation-controller-recalled-cmp}.  While the general
equivalence between derived-foliation unfoldings and flat splittings is
due to the transverse-controller theorem of \cite{CorreaUnfoldings}, here we construct the Poisson homotopy stabilizer
controller \(\uupi\), and provide the identification of Poisson unfoldings as the
zero fibre of the induced Poisson-variation map (and the subsequent
transport and quantum anomaly theory built from this stabilizer, later on in the paper).

\begin{corollary}[Unfolding versus deformation]\label{cor:unfolding-versus-deformation-cmp}
The composite
\[
 \Unf^{\tr}_{\Pois_n}(X/S,\pi)
 \longrightarrow
 \Map\bigl(\bbT_S,\bbT^{\mathrm{ext}}_\pi\Pois_n(X/S)\bigr)
\]
is canonically null-homotopic. 
\end{corollary}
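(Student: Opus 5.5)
The plan is to read the corollary off directly from the definition of $\Unf^{\tr}_{\Pois_n}(X/S,\pi)$ as a homotopy fibre, using the homotopy-cartesian square \eqref{eq:splitting-homotopy-pullback-cmp} from the proof of Theorem~\ref{thm:classification-cmp}. First I will make the composite explicit. It is the forgetful map
\[
 \Unf^{\tr}_{\Pois_n}(X/S,\pi)\longrightarrow\Unf^{\tr}_{\mathcal F_\pi}(X/S),
\]
which discards the null-homotopy, followed by the Poisson variation map $\operatorname{Var}_\pi$ of \eqref{eq:variation-on-splittings-cmp}. By Definition~\ref{def:poisson-unfolding-cmp}, $\Unf^{\tr}_{\Pois_n}(X/S,\pi)=\hofib_0(\operatorname{Var}_\pi)$. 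So it sits in a homotopy-cartesian square whose other corner is the point $\{0\}\to\Map_{\cO_S\text{-}\mathrm{Mod}}(\bbT_S,\bbT^{\mathrm{ext}}_\pi\Pois_n(X/S))$.

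Second, I will recall that a homotopy fibre in an $\infty$-category is not just an object. It comes with the structure of a commuting square, namely a specified $2$-cell
\[
 \operatorname{Var}_\pi\circ\mathrm{pr}\;\simeq\;0\circ(\text{terminal map}).
\]
This $2$-cell is the required null-homotopy. Pointwise, it sends an object $(u,h)$, made of an underlying unfolding $u$ and a coherent null-homotopy $h$ of $\operatorname{Var}_\pi(u)$, to $h$ itself. Canonicity holds because the fibre is defined up to contractible choice, and the null-homotopy is part of the universal cone. Equivalently, through the classification \eqref{eq:local-classification-cmp}, I will say it as follows. The composite $\bbT_S\to\uupi\to\Uder(\mathcal F_\pi/S)\to\bbT^{\mathrm{ext}}_\pi\Pois_n$ is null-homotopic because $\uupi\to\Uder\to\bbT^{\mathrm{ext}}_\pi$ is a fibre sequence (Theorem~\ref{thm:intrinsic-controller-cmp}). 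Postcomposition with $\Map(\bbT_S,-)$ preserves this null-homotopy.

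Third, I will note compatibility with descent and presentation independence. The null-homotopies produced on a smooth hypercover are the structure maps of the levelwise fibres, and totalization preserves them. This is the same formal argument as in the descent step of Theorem~\ref{thm:classification-cmp}.

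The only point needing care, and the main possible obstacle, is the interpretation of ``canonical''. The null-homotopy is not a property, namely vanishing in $\pi_0$. It is a natural transformation of functors on the derived groupoid. I expect to address this by stating it as a homotopy in $\Map\bigl(\Unf^{\tr}_{\Pois_n},\Map(\bbT_S,\bbT^{\mathrm{ext}}_\pi\Pois_n)\bigr)$ that is determined by the fibre structure, and is therefore natural in base change and in chart equivalences.
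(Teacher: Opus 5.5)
Your proposal is correct and matches the paper's proof: the null-homotopy is the tautological $2$-cell of the homotopy fibre in Definition~\ref{def:poisson-unfolding-cmp}, or equivalently the lower-left vertex of the square~\eqref{eq:splitting-homotopy-pullback-cmp}. Your remarks on descent, presentation independence and naturality are consistent with that argument but go beyond what the paper's one-line proof needs.
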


\begin{proof}
This is the tautological null-homotopy supplied by the homotopy fibre in Definition~\ref{def:poisson-unfolding-cmp}, or equivalently by the lower-left vertex of~\eqref{eq:splitting-homotopy-pullback-cmp}.
\end{proof}

\begin{remark}
It follows from the previous results that a Poisson unfolding is not an arbitrary deformation of \(\pi\). It is flat transverse transport whose image in the infinitesimal Poisson-variation problem is the zero morphism, together with the coherent trivialization of that displacement supplied by the stabilizer controller.
\end{remark}

\subsection{Effective truncation and central isotropy}\label{subsec:effective-isotropy-cmp}

The invariant controller is the crossed or two-term $L_\infty$ object~\eqref{eq:controller-cmp}.  Passing immediately to an ordinary quotient is legitimate only in the effective situation.  The following description makes explicit what is lost otherwise.

\begin{proposition}[Strict crossed presentation of a higher splitting]
\label{prop:crossed-splitting-cmp}
Work in a strict Hamiltonian chart in which
\[
 \uupi=\bigl[\hhpi\xrightarrow{j_\pi}D^1_{\bas,\pi}(\hhpi/S)\bigr]
\]
is represented by a crossed dg-Lie algebroid.  Assume, for the displayed
strict description, that \(S\) is smooth underived, so that \(\bbT_S\) is
concentrated in degree zero.  Locally, a strict representative of a flat
higher splitting consists of:
\begin{enumerate}[label=\textup{(\roman*)},leftmargin=2.2em]
\item a chain map with identity anchor
\(
 \widetilde s:\bbT_S\rightarrow D^1_{\bas,\pi}(\hhpi/S);
\)

\item a degree-zero \(\hhpi\)-valued two-form
\(
 \Omega_{\widetilde s}\in\Omega_S^2\otimes\hhpi
\)
such that
\begin{equation}\label{eq:crossed-curvature-cmp}
 [\widetilde s(\xi),\widetilde s(\zeta)]
 -\widetilde s([\xi,\zeta])
 =
 j_\pi\bigl(\Omega_{\widetilde s}(\xi,\zeta)\bigr);
\end{equation}

\item the crossed Bianchi coherence
\begin{equation}\label{eq:crossed-bianchi-cmp}
 d_{\widetilde s}\Omega_{\widetilde s}=0.
\end{equation}
\end{enumerate}
Here \(d_{\widetilde s}\) is the covariant differential induced by the
lift \(\widetilde s\).  

If \(j_\pi\) is a monomorphism and the strict
effective quotient \(\mathfrak u_\pi\) exists, then
\eqref{eq:crossed-curvature-cmp} is precisely the assertion that the
induced section
\(
 \bbT_S\rightarrow \mathfrak u_\pi
\)
is flat.  Thus the effective quotient computes the ordinary truncation,
whereas the crossed controller retains the stabilizer data.
\end{proposition}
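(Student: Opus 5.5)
The plan is to unwind Definition~\ref{def:derived-flat-splittings-cmp} in the crossed model. We will use the standard description of $L_\infty$-morphisms from a dg-Lie algebroid concentrated in degree zero into a two-term $L_\infty$-algebroid: such a morphism is the same as a Maurer--Cartan element in the convolution algebra $C^\bullet_{\mathrm{CE}}(\bbT_S;\uupi)$ with prescribed anchor component. Since $S$ is smooth underived, $\bbT_S$ sits in degree zero, and the Chevalley--Eilenberg cochains are just $\Omega^\bullet_S\otimes(-)$. The crossed module $\uupi=[\hhpi\to D^1_{\bas,\pi}]$, with $\hhpi$ placed in degree $-1$, has only two components. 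An $L_\infty$-morphism $\bbT_S\to\uupi$ therefore has only two possibly nonzero Taylor coefficients: a unary component $\widetilde s\in\Omega^1_S\otimes D^1_{\bas,\pi}$ and a binary component $\Omega_{\widetilde s}\in\Omega^2_S\otimes\hhpi$. Components of arity $\geq 3$ would land in degree $\leq -2$, which vanishes.

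The steps, in order, are as follows.
\begin{enumerate}[label=\textup{(\arabic*)},leftmargin=2.2em]
\item Write the Maurer--Cartan equation in arity one. It says that $\widetilde s$ is a chain map, and that composing with the anchor gives $\id_{\bbT_S}$. This is item~(i), with the anchor condition coming from the overcategory $\mathrm{LieAlgd}(S)_{/\bbT_S}$.
\item Write it in arity two. The failure of $\widetilde s$ to preserve brackets must equal the image under the structure map $j_\pi$ of the binary component. This is \eqref{eq:crossed-curvature-cmp}; signs and conventions follow the cone differential of Lemma~\ref{lem:homotopy-stabilizer-cmp}.
\item Write it in arity three. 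The Jacobiator of $\widetilde s$, which is $\widetilde s$ applied to the Jacobi identity on $\bbT_S$ and hence zero, must be compatible with $\Omega_{\widetilde s}$. Expanding with the crossed action of $D^1_{\bas,\pi}$ on $\hhpi$ gives exactly the Chevalley--Eilenberg differential of $\Omega_{\widetilde s}$ twisted by $\widetilde s$. This yields \eqref{eq:crossed-bianchi-cmp}, with $d_{\widetilde s}$ the covariant exterior derivative of the $\widetilde s$-module structure on $\hhpi$.
\end{enumerate}
The arity-three condition is not automatic. Applying $j_\pi$ to $d_{\widetilde s}\Omega_{\widetilde s}$ gives zero by the Jacobi identity in $D^1_{\bas,\pi}$ together with the first Peiffer identity. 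When $j_\pi$ is not injective, however, this only shows that $d_{\widetilde s}\Omega_{\widetilde s}$ lies in $\ker j_\pi$, which is central isotropy by Proposition~\ref{prop:controller-structure-cmp}. Its vanishing must therefore be imposed as separate data, which is why it appears as item~(iii).

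The main obstacle is justifying that strict representatives of this shape compute the derived mapping space locally. To handle it, I would invoke the Nuiten--Noohi model of Definition~\ref{def:derived-flat-splittings-cmp}. With $\bbT_S$ cofibrant in the smooth affine local situation and the crossed dg-Lie algebroid fibrant, points of the mapping space are represented by $L_\infty$-morphisms, and the description above applies. Homotopies between such points (gauge transformations by $\Omega^1_S\otimes\hhpi$) are not part of the claim, so they can be ignored.

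For the final assertion, suppose $j_\pi$ is a monomorphism and $\mathfrak u_\pi$ exists. Project \eqref{eq:crossed-curvature-cmp} to $\mathfrak u_\pi=D^1_{\bas,\pi}/j_\pi(\hhpi)$; the induced section $\bar s$ then has zero bracket curvature. Conversely, given a flat $\bar s$, choose any lift $\widetilde s$, which exists locally. Its curvature lies in $j_\pi(\hhpi)$, so by injectivity it is uniquely of the form $j_\pi(\Omega)$. Injectivity combined with the computation in step~(3) shows that $j_\pi(d_{\widetilde s}\Omega)=0$, hence $d_{\widetilde s}\Omega=0$, so the Bianchi identity holds automatically. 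This identifies the effective quotient with the ordinary truncation, while in the non-injective case the pair $(\Omega_{\widetilde s},\text{Bianchi})$ retains the extra stabilizer data.
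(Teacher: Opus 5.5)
Your proposal is correct and follows essentially the paper's route. You decompose a morphism into a linear component $\widetilde s$ and a binary homotopy component $\Omega_{\widetilde s}$, read off (i), \eqref{eq:crossed-curvature-cmp} and \eqref{eq:crossed-bianchi-cmp} as the successive coherence identities, and use the same lift, unique-preimage and injectivity argument for the effective quotient. Your remark that $j_\pi(d_{\widetilde s}\Omega_{\widetilde s})=0$ holds automatically, so (iii) is extra data only when $\ker j_\pi\neq0$, is a useful sharpening.

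Two small corrections. First, $L_\infty$-morphisms compute the derived mapping space because they are maps out of a cofibrant bar--cobar resolution, not because $\bbT_S$ is cofibrant; in general it is not, which is exactly why $\Omega_{\widetilde s}$ appears. Second, your degree count excluding arity $\geq 3$ assumes both terms sit in single degrees; this is the same truncation the paper makes.
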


\begin{proof}
A crossed dg-Lie algebroid
\(
 \bigl[\hhpi\xrightarrow{j_\pi}D^1_{\bas,\pi}(\hhpi/S)\bigr]
\)
is a strict two-term model for a Lie \(2\)-algebroid.  Since
\(\bbT_S\) is concentrated in degree zero in the displayed situation, a
strict representative of a morphism from \(\bbT_S\) to this two-term
object is given by a linear component
\(
 \widetilde s:\bbT_S\longrightarrow D^1_{\bas,\pi}(\hhpi/S)
\)
together with a binary homotopy component
\(
 \Omega_{\widetilde s}\in\Omega_S^2\otimes\hhpi .
\)
The chain-map condition is exactly the requirement imposed in
{(i)}.

The first non-linear coherence identity for such a morphism says that
the failure of \(\widetilde s\) to preserve brackets is measured by the
image under \(j_\pi\) of the binary homotopy: this is precisely \eqref{eq:crossed-curvature-cmp}.  The next coherence
identity is the Bianchi identity for this curvature homotopy, which is exactly \eqref{eq:crossed-bianchi-cmp}.

Assume now that \(j_\pi\) is a monomorphism and that the strict effective
quotient
\[
 \mathfrak u_\pi=
 D^1_{\bas,\pi}(\hhpi/S)/j_\pi(\hhpi)
\]
exists.  Passing \eqref{eq:crossed-curvature-cmp} to the quotient gives
\[
 [\bar s(\xi),\bar s(\zeta)]-\bar s([\xi,\zeta])=0,
\]
where \(\bar s:\bbT_S\to\mathfrak u_\pi\) is the section induced by
\(\widetilde s\).  Hence \(\bar s\) is flat.  Conversely, if
\(\bar s\) is a flat effective section, then locally one may choose a
lift \(\widetilde s\) to \(D^1_{\bas,\pi}(\hhpi/S)\).  The flatness of
\(\bar s\) says exactly that the curvature of \(\widetilde s\) lies in
the image of \(j_\pi\); since \(j_\pi\) is a monomorphism, this curvature
has a unique local preimage \(\Omega_{\widetilde s}\).  The ordinary
Bianchi identity for \(R_{\widetilde s}\), together with
\(R_{\widetilde s}=j_\pi(\Omega_{\widetilde s})\) and injectivity of
\(j_\pi\), gives
\(
 d_{\widetilde s}\Omega_{\widetilde s}=0.
\)
Thus flat effective sections are precisely the ordinary truncations of
the strict crossed data above.
\end{proof}

\begin{remark}
The two equations above are the strict two-term shadow of the coherence
equations for a flat \(L_\infty\)-splitting.  The lift \(\widetilde s\)
is the first component of the splitting, while
\(\Omega_{\widetilde s}\) is the first homotopy correcting its failure
to preserve brackets strictly.  For a genuinely \(L_\infty\)-controller,
or over a derived base, further higher homotopies may be present; the
complete flatness condition is then the full \(L_\infty\)-morphism
equation.

Gauge transformations are supplied by \(\hhpi\)-valued vertical
homotopies.  The stabilizer of the strict action groupoid is
\(\ker(j_\pi)\); in the derived splitting space, the cohomology of this
kernel records the residual higher automorphisms.  The Peiffer identities
make this kernel central in the crossed-module sense.
\end{remark}

\subsection{Controller-compatible base change}\label{subsec:base-change-cmp}

Flat transport should be functorial in the parameter space.  For arbitrary pullback this requires compatibility of the chosen Hamiltonian geometry with base change; once that compatibility is available, the behaviour of vertical symmetries is formal.  Formal \'etaleness is needed only when one wishes to identify the full de Rham deformation complexes.

Let
\[
\begin{tikzcd}
X'\arrow[r,"g"]\arrow[d,"p'"']&X\arrow[d,"p"]\\
S'\arrow[r,"f"']&S
\end{tikzcd}
\]
be homotopy cartesian and put $\pi'=g^*\pi$.

\begin{definition}[Controller-compatible base change]\label{def:controller-compatible-base-change-cmp}
The square above is \emph{controller-compatible} if the pulled-back Poisson family is controller-admissible and there is a homotopy-cartesian square of derived Lie algebroids
\begin{equation}\label{eq:controller-base-change-cmp}
\begin{tikzcd}[column sep=large,row sep=large]
\mathbb U_{\pi'}\arrow[r]\arrow[d]&f^*\mathbb U_\pi\arrow[d]\\
\bbT_{S'}\arrow[r,"df"']&f^*\bbT_S.
\end{tikzcd}
\end{equation}
The equivalence is required to respect the action on the Poisson datum and, in crossed models, the tangent-inner morphisms and central isotropy.
\end{definition}

For perfect strict Hamiltonian charts, \'etale base change is controller-compatible: the cotangent transitivity triangle, first-order differential operators, and basic-symbol condition commute with \'etale pullback.  This is the base-change statement proved for transverse controllers in \cite{CorreaUnfoldings}; the cotangent-complex input is standard \cite{Illusie}.

\begin{proposition}[Base change of controllers and splittings]\label{prop:base-change-cmp}
Assume that the square is controller-compatible.  Then there is a canonical equivalence of vertical derived Lie objects
\begin{equation}\label{eq:vertical-base-change-cmp}
 \mathbb K_{\pi'}
 :=\fib(\mathbb U_{\pi'}\to\bbT_{S'})
 \simeq
 f^*\mathbb K_\pi.
\end{equation}
Every flat splitting $s:\bbT_S\to\uupi$ pulls back canonically to a flat splitting
\[
 s':\bbT_{S'}\longrightarrow\mathbb U_{\pi'},
 \qquad
 \xi'\longmapsto\bigl(\xi',f^*s(df(\xi'))\bigr)
\]
in the homotopy-pullback model~\eqref{eq:controller-base-change-cmp}.  In a strict model its curvature satisfies
\begin{equation}\label{eq:curvature-base-change-cmp}
 F_{s'}(\xi',\zeta')
 =f^*F_s(df(\xi'),df(\zeta')).
\end{equation}

The induced map on Chevalley--Eilenberg deformation complexes is
\begin{equation}\label{eq:def-complex-base-change-map-cmp}
 f^*C^\bullet_{\mathrm{CE}}(\bbT_S;\mathbb K_{\pi,s})
 \longrightarrow
 C^\bullet_{\mathrm{CE}}(\bbT_{S'};\mathbb K_{\pi',s'}).
\end{equation}
If $f$ is formally \'etale, so that $df:\bbT_{S'}\simeq f^*\bbT_S$, then~\eqref{eq:def-complex-base-change-map-cmp} is an equivalence. 

All statements remain valid for the crossed controller; in particular, central isotropy pulls back rather than being discarded.
\end{proposition}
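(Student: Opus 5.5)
The proof proceeds formally from the homotopy-cartesian square \eqref{eq:controller-base-change-cmp}, in four steps.

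\emph{Vertical parts.} Take fibres of the two vertical arrows of \eqref{eq:controller-base-change-cmp}. In any stable setting, and in particular on underlying $\cO_{S'}$-module complexes, a homotopy-cartesian square induces an equivalence on the fibres of parallel arrows. This gives
\[
 \mathbb K_{\pi'}=\fib(\mathbb U_{\pi'}\to\bbT_{S'})\simeq\fib(f^*\uupi\to f^*\bbT_S).
\]
Since $f^*$ is exact on perfect complexes, the right-hand side is $f^*\mathbb K_\pi$. To see that this is an equivalence of derived Lie objects, and not merely of complexes, I would note that the fibre of an anchor is computed in the localized category of Lie algebroids over the base, where the forgetful functor to modules preserves limits. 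For the crossed controller, the same argument applied levelwise to $[\hhpi\to D^1_{\bas,\pi}]$ shows that $\ker(j_\pi)$ pulls back to $\ker(j_{\pi'})$. This uses the requirement in Definition~\ref{def:controller-compatible-base-change-cmp} that the square respect the tangent-inner morphisms.

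\emph{Pulling back splittings.} A flat splitting $s$ gives a morphism $f^*s:f^*\bbT_S\to f^*\uupi$ over $f^*\bbT_S$. Precomposing with $df$ and pairing with $\id_{\bbT_{S'}}$ defines a cone over the cospan in \eqref{eq:controller-base-change-cmp}. The universal property of the homotopy pullback, taken in $\mathrm{LieAlgd}(S')$ via the mapping-space description of Definition~\ref{def:derived-flat-splittings-cmp}, then yields $s'$. Its identity anchor comes from the left leg. The induced map $\Flat^{\der}_S(\bbT_S,\uupi)\to\Flat^{\der}_{S'}(\bbT_{S'},\mathbb U_{\pi'})$ is functorial, so the pullback is canonical.

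\emph{Curvature formula.} In a strict model the homotopy pullback is the fibre product $\bbT_{S'}\times_{f^*\bbT_S}f^*\uupi$, with componentwise bracket. The only subtlety is the bracket of pulled-back sections $f^*$, which is governed by the anchor through the usual pullback Lie algebroid formula. Computing $[s'(\xi'),s'(\zeta')]-s'([\xi',\zeta'])$ componentwise, the $\bbT_{S'}$ component vanishes. The second component becomes $f^*\bigl([s(\xi),s(\zeta)]-s([\xi,\zeta])\bigr)$ evaluated at $\xi=df(\xi')$ and $\zeta=df(\zeta')$, because $df$ is bracket-preserving on $f$-related fields. This gives \eqref{eq:curvature-base-change-cmp}.

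\emph{Deformation complexes.} The map \eqref{eq:def-complex-base-change-map-cmp} is given by restricting cochains along $df$, using $f^*\mathbb K_\pi\simeq\mathbb K_{\pi'}$. The coefficient modules carry the flat actions $\nabla^s$ and $\nabla^{s'}$, and the first step together with the curvature formula shows that these actions are intertwined. If $f$ is formally étale, then $df$ is an equivalence. The Chevalley--Eilenberg complexes $\Hom(\wedge^\bullet\bbT,\mathbb K)$ are then identified degreewise, and since the differentials correspond, the map is an equivalence.

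I expect the main obstacle to be the first step at the $L_\infty$ level. There one must check that the fibre is computed compatibly with the transferred two-term $L_\infty$ brackets of Lemma~\ref{lem:homotopy-stabilizer-cmp}, rather than only on underlying complexes. I would handle this by invoking conservativity and limit preservation of the forgetful functor in \cite{Nuiten}.
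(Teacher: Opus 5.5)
Your proposal is correct and follows the paper's proof essentially step for step. Identifying the fibres of the vertical arrows of the homotopy-cartesian square and using exactness of derived pullback gives $\mathbb K_{\pi'}\simeq f^*\mathbb K_\pi$. The universal map into the homotopy pullback gives $s'$, with the curvature formula obtained by strict expansion. Functoriality of Chevalley--Eilenberg cochains, which is an equivalence when $df$ is, handles the deformation complexes and the crossed case. Your extra remark on checking the equivalence at the $L_\infty$ level, via limit preservation of the forgetful functor, goes slightly beyond the paper, which argues only on underlying complexes.
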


\begin{proof}
In the homotopy-pullback model
\[
 \mathbb U_{\pi'}\simeq \bbT_{S'}\times_{f^*\bbT_S}f^*\uupi,
\]
the fibre of the projection to $\bbT_{S'}$ is obtained by putting the first component equal to zero.  This leaves precisely the fibre of $f^*\uupi\to f^*\bbT_S$.  Since derived pullback is exact on quasi-coherent complexes,
\[
 \fib(f^*\uupi\to f^*\bbT_S)
 \simeq
 f^*\fib(\uupi\to\bbT_S).
\]
This proves~\eqref{eq:vertical-base-change-cmp}; no assumption on~$df$ is needed for this step.

The displayed formula for~$s'$ is the universal map into the homotopy pullback.  Since $df$ and $s$ are morphisms of derived Lie algebroids, so is~$s'$.  In a strict model, expanding its bracket defect gives~\eqref{eq:curvature-base-change-cmp}, and hence flatness is preserved.

The adjoint representations are compatible with pullback.  Functoriality of Chevalley--Eilenberg cochains for Lie-algebroid morphisms therefore gives~\eqref{eq:def-complex-base-change-map-cmp}.  If $f$ is formally \'etale, the tangent Lie algebroids are identified and the relative cotangent complex vanishes; the resulting map of cochain complexes is an equivalence.  The same argument takes place objectwise in a crossed model and preserves the kernel of the inner map.
\end{proof}

\subsection{The non-degenerate shifted symplectic case}\label{subsec:symplectic-comparison-cmp}

For a non-degenerate Poisson structure, the controller has a direct symplectic interpretation.  This is important both geometrically and physically: it identifies an unfolding with a flat transverse connection preserving the shifted symplectic phase-space structure, which is the form used later in BV and AKSZ theory.

\smallskip

Assume now that $\pi$ is non-degenerate and let
\[
 \omega\in\mathcal A^{2,\mathrm{cl}}(X/S,n)
\]
be the corresponding relative $n$-shifted symplectic form.  Let the same intrinsic transverse symmetry object $\Uder(\mathcal F_\pi/S)$ act on the derived space of closed two-forms.  Define
\begin{equation}\label{eq:symplectic-controller-intrinsic-cmp}
 \uuomega
 :=\Stab^{\mathrm h}_{\Uder(\mathcal F_\pi/S)}(\omega).
\end{equation}
In a strict Hamiltonian chart its upper term is represented by
\begin{equation}\label{eq:symplectic-stabilizer-cmp}
 D^1_{\bas,\omega}
 :=\hofib\!\left(
 D^1_{\bas}
 \xrightarrow{D\mapsto\Lieder_D\omega}
 \bbT_\omega\mathcal A^{2,\mathrm{cl}}(X/S,n)
 \right).
\end{equation}
A closed degree-zero element in a cone model is a pair $(D,\lambda)$ satisfying
\begin{equation}\label{eq:symplectic-cone-equation-cmp}
 d_{\mathrm{cl}}\lambda=\Lieder_D\omega.
\end{equation}

\begin{theorem}[Poisson--symplectic comparison of controllers]\label{thm:poisson-symplectic-cmp}
Let $(X/S,\pi)$ be controller-admissible and non-degenerate, and let $\omega$ be the corresponding shifted symplectic form.  Then the natural non-degenerate Poisson--symplectic equivalence induces an equivalence of transverse controllers
\begin{equation}\label{eq:controller-poisson-symplectic-equivalence-cmp}
 \uupi\simeq\uuomega.
\end{equation}
Consequently,
\[
 \Unf^{\tr}_{\Pois_n}(X/S,\pi)
 \simeq
 \Flat^{\der}_S(\bbT_S,\uuomega).
\]
A strict representative of such an unfolding is a flat transverse connection together with homotopies~\eqref{eq:symplectic-cone-equation-cmp}.  The class represented by~$\omega$ in the cohomology of the relative closed-form complex is horizontal.  For $n=0$ this recovers horizontality of the ordinary relative de Rham class $[\omega]$.
\end{theorem}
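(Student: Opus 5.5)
The plan is to compare the two homotopy stabilizers by comparing their targets equivariantly. Both $\uupi$ and $\uuomega$ are, by Theorem~\ref{thm:intrinsic-controller-cmp} and \eqref{eq:symplectic-controller-intrinsic-cmp}, homotopy stabilizers of a point under the same acting object $\Uder(\mathcal F_\pi/S)$. The only difference is the formal moduli problem acted upon: the Poisson Maurer--Cartan problem at~$\pi$ in one case, and the space of closed two-forms at~$\omega$ in the other. I would therefore invoke the non-degenerate Poisson--symplectic equivalence of \cite{CPTVV}. It identifies a formal neighbourhood of~$\pi$ in non-degenerate shifted Poisson structures with a formal neighbourhood of~$\omega$ in shifted symplectic forms. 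I would then check that this identification is $\Uder(\mathcal F_\pi/S)$-equivariant. This should hold because the comparison is natural in the derived stack, and basic transverse derivations act on polyvectors and on forms by the same Lie derivative along projectable symbols. Consequently the two variation morphisms $\nu^{\mathrm{int}}_\pi$ and $D\mapsto\Lieder_D\omega$ become identified after composing with the tangent equivalence $\bbT^{\mathrm{ext}}_\pi\Pois_n(X/S)\simeq \bbT_\omega\mathcal A^{2,\mathrm{cl}}(X/S,n)$ (suitably extended).

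The main obstacle sits in this tangent-level step, because the Poisson side uses the extended complex $F^1\mathfrak{pol}_n$ rather than $F^2$. I would handle it as follows. Contraction with $\omega$ (the map $\pi^\sharp$ inverted) identifies weight-$q$ polyvectors with $q$-forms, and turns $d_\pi$ into the de Rham differential. This identifies $(F^1\mathfrak{pol}_n,d_\pi)$ with the truncated de Rham complex $\mathcal A^{\geq1}$ shifted appropriately. Under this identification the weight-one term (vector fields $Y$) maps to one-forms $\iota_Y\omega$, and $[\pi,Y]$ maps to $d\iota_Y\omega=\Lieder_Y\omega$. The relevant point is that the tangent of closed two-forms, $\mathcal A^{2,\mathrm{cl}}$ shifted, is quasi-isomorphic to $\mathcal A^{\geq 1}$ with one shift less: a closed-form homotopy $\lambda$ in \eqref{eq:symplectic-cone-equation-cmp} corresponds to a relative one-form, that is, to a vector field $\eta$ in weight one. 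This gives a filtered quasi-isomorphism of targets that intertwines the displacement maps. I would verify the signs against \eqref{eq:vector-field-poisson-variation-cmp} and Example~\ref{ex:cotangent-nullhomotopy-cmp}, where $\eta_\alpha=-X_\alpha$ corresponds to $\lambda=-\alpha$. Once the arrows are equivalent, homotopy fibres are equivalent, as in the proof of Theorem~\ref{thm:intrinsic-controller-cmp}. The inner maps $j_\pi$ also correspond, so the equivalence holds at the level of crossed controllers over $\bbT_S$. This proves \eqref{eq:controller-poisson-symplectic-equivalence-cmp}.

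The classification statement then follows formally from Theorem~\ref{thm:classification-cmp} and invariance of $\Flat^{\der}_S(\bbT_S,-)$ under equivalence of the target. For the strict description, Proposition~\ref{prop:crossed-splitting-cmp} says that a splitting is a lift $\widetilde s$ of base vector fields, flat modulo inner terms, that is, a flat transverse connection. Its components in the cone variable are the homotopies $\lambda_\xi$ with $d_{\mathrm{cl}}\lambda_\xi=\Lieder_{\widetilde s(\xi)}\omega$. Horizontality then follows. The Gauss--Manin-type derivative of $[\omega]$ along $\xi$ is represented by $\Lieder_{\widetilde s(\xi)}\omega$, which is exact by this equation, so the class is horizontal. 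Independence of the choice of lift comes from the fact that inner and relative changes act through exact terms. For $n=0$ the closed-form complex reduces to relative de Rham cohomology in degree two, giving horizontality of $[\omega]$ for the induced connection.
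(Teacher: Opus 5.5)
Your outline matches the paper's proof. You invoke the Pantev--To\"en--Vaqui\'e--Vezzosi (CPTVV) non-degenerate Poisson--symplectic comparison, use its naturality to get equivariance, and use invariance of homotopy stabilizers. Then you apply Theorem~\ref{thm:classification-cmp} and read off horizontality from $d_{\mathrm{cl}}\lambda_\xi=\Lieder_{D_\xi}\omega$. The problem is the tangent-level step you add for the $F^1$ versus $F^2$ mismatch. You are right that the mismatch must be handled; the paper's proof passes over it by speaking directly of equivalent action groupoids. But the quasi-isomorphism you use to handle it is false.

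Under $\pi^\sharp$ the weight filtrations correspond weight by weight: $F^p\DR(X/S)\simeq F^p\mathfrak{pol}_n^{\pi}$ up to the usual shifts. The complex $\bbT_\omega\mathcal A^{2,\mathrm{cl}}(X/S,n)$ is the weight $\geq2$ part, so it matches $\mathfrak g^{\mathrm{def}}_\pi$, not $\mathfrak g^{\mathrm{ext}}_\pi$. The closed two-form complex and $\mathcal A^{\geq1}$ are not shifts of one another; they differ by the weight-one graded piece $\bbL_{X/S}$.

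Already for $n=0$ and $X/S$ smooth, the Pantev--To\"en--Vaqui\'e--Vezzosi closed-form complex is zero in the degree where $\lambda$ would live. Its stabilizer therefore imposes $\Lieder_D\omega=0$ strictly. The inner directions $\iota_\pi(\alpha)$ then admit no null-homotopy there at all, because $\Lieder_{X_\alpha}\omega=\pm d\alpha$. So equivariance of CPTVV for the transverse action alone identifies only the deformation-level stabilizers. These differ from $\uupi$ by the relative vector fields (cf.\ Remark~\ref{rem:why-extended-cmp}).

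What you need is to form the symplectic stabilizer in the same extended target. That target is the formal action groupoid of relative infinitesimal automorphisms acting on closed two-forms. Its tangent is the cone of $\bbT_{X/S}\to\bbT_\omega\mathcal A^{2,\mathrm{cl}}(X/S,n)$, $Y\mapsto\Lieder_Y\omega$. Via $Y\mapsto\iota_Y\omega$ and Cartan's formula $\Lieder_Y\omega=d\,\iota_Y\omega$, non-degeneracy identifies this cone with $F^1\DR(X/S)$, suitably shifted. In that complex $\lambda$ is genuinely a one-form, which is how the paper itself uses it in Theorem~\ref{thm:gauss-manin-realization-cmp}.

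The CPTVV equivalence is natural in $X/S$, hence equivariant for relative automorphisms too. Passing to quotient stacks, equivalently using the Lichnerowicz--de Rham identification $F^1\mathfrak{pol}_n^\pi\simeq F^1\DR$, matches the two variation arrows. No claim that $F^2\DR$ is a shift of $F^1\DR$ is needed.

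With $\uuomega$ read this way, the rest of your proposal goes through as in the paper. The fibres agree, your sign check $\eta_\alpha=-X_\alpha$ matches the inner maps, the classification theorem applies, and exactness of $\Lieder_{D_\xi}\omega$ gives horizontality. For $n=0$ it is also this extended complex that yields the de Rham class $[\omega]\in H^2_{\mathrm{dR}}(X/S)$. The unextended closed-form complex only sees closed two-forms in that degree.
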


\begin{proof}
Calaque--Pantev--To\"en--Vaqui\'e--Vezzosi prove that non-degenerate shifted Poisson structures and shifted symplectic structures define equivalent derived spaces, naturally in the underlying derived stack \cite{CPTVV,PTVV}.  Naturality implies equivariance for infinitesimal automorphisms: the action of a projectable derivation on~$\pi$ corresponds to its Lie-derivative action on~$\omega$.  Therefore the action groupoids of the intrinsic transverse symmetry object on the two spaces are equivalent in a neighbourhood of the corresponding points.  Homotopy stabilizers are invariant under such an equivariant equivalence, which gives~\eqref{eq:controller-poisson-symplectic-equivalence-cmp}.  Applying the classification theorem yields the equivalence of unfolding spaces.

Locally, a splitting through~$\uuomega$ is represented by pairs $(D_\xi,\lambda_\xi)$ satisfying
\(
 \Lieder_{D_\xi}\omega=d_{\mathrm{cl}}\lambda_\xi
\).
Thus the derivative of the cohomology class represented by~$\omega$ is zero.  Flatness of the splitting makes these local operators into a flat connection; the invariant construction is the stabilizer--transport principle proved in Theorem~\ref{thm:stabilizer-transport-cmp} below.
\end{proof}

\section{Transport, curvature, and formal moduli}\label{sec:transport-principle}

A flat splitting does more than identify an unfolding: it transports every infinitesimal object on which the stabilizer acts.  This section isolates that mechanism and then applies it to the vertical controller, the Poisson deformation complex, and classical de Rham cohomology.  It also organizes the existence and deformation theory of unfoldings through the anchor extension
\(
 \mathbb K_\pi\to\mathbb U_\pi\to\bbT_S
\).

The first subsection proves a general stabilizer--transport principle.  The second identifies the Atiyah--Kodaira--Spencer class, curvature, and the formal moduli algebra of a fixed unfolding.  The last subsection verifies that, in a smooth proper classical realization, the resulting connection is the usual Gauss--Manin connection.  The same cancellation identity will reappear in Part~II as the algebraic core of star-product, BV, and AKSZ transport.

\subsection{The stabilizer--transport principle}\label{subsec:stabilizer-transport-cmp}

The homotopy stabilizer is not merely a subobject of the symmetry algebra.  It acts canonically on the complex twisted by the Maurer--Cartan element.  A flat base action through the stabilizer can therefore be composed with this canonical representation.

\begin{lemma}[Canonical action of a homotopy stabilizer]
\label{lem:stabilizer-action-cmp}
Let a derived Lie algebra \(\mathfrak a\) act by filtered derivations
on a complete filtered dg-Lie algebra \(\mathfrak p\), and let
\(m\in\MC(\mathfrak p)\).  The homotopy stabilizer
\[
\mathfrak a_m
=
\hofib\bigl(\mathfrak a\to \mathfrak p^m[1]\bigr)
\]
acts canonically, up to contractible choice of model, on the twisted
dg-Lie algebra
\[
\mathfrak p^m=(\mathfrak p,d_m=d+[m,-]).
\]

In a strict cone chart, a closed degree-zero element of
\(\mathfrak a_m\) is represented by a pair \((D,\eta)\), where
\[
d_m\eta=D(m).
\]
Its induced infinitesimal action on \(\mathfrak p^m\) is the chain
derivation
\begin{equation}\label{eq:stabilizer-representation-cmp}
 \rho_m(D,\eta)=D+\operatorname{ad}_\eta.
\end{equation}
If the action and the stabilizer are represented by compatible strict
dg-Lie models, these operators define a strict dg-Lie action.  In a
general \(L_\infty\)-model, the same formula gives the linear part of
the action, and the higher homotopies of the stabilizer action supply
the remaining coherences.
\end{lemma}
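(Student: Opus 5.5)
The argument is mostly a strict calculation in the cone model of Lemma~\ref{lem:homotopy-stabilizer-cmp}, followed by a formal homotopy-invariance step. First I treat the strict case and show that $\rho_m(D,\eta)=D+\operatorname{ad}_\eta$ is a chain derivation of $\mathfrak p^m$ whenever $(D,\eta)$ is a closed degree-zero pair. It is a derivation of the bracket: $D$ is one by hypothesis, and $\operatorname{ad}_\eta$ is one by the Jacobi identity. To compare it with $d_m$, I compute the commutator
\[
 [d_m,D]=[d,D]+[[m,-],D].
\]
Since $D$ is closed, $[d,D]=0$. Since $D$ is a derivation, $[D,\operatorname{ad}_m]=\operatorname{ad}_{D(m)}$, so $[d_m,D]=-\operatorname{ad}_{D(m)}$, up to the sign fixed by the degree conventions. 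For the other term,
\[
 [d_m,\operatorname{ad}_\eta]=\operatorname{ad}_{d\eta}+\operatorname{ad}_{[m,\eta]}=\operatorname{ad}_{d_m\eta}.
\]
The stabilizer equation $d_m\eta=D(m)$ then makes the two contributions cancel, so $[d_m,\rho_m(D,\eta)]=0$. This is the cancellation identity the section advertises. For non-closed elements of arbitrary degree I will use the cone differential~\eqref{eq:cone-differential-cmp} and verify the general identity
\[
 [d_m,\rho_m(D,\eta)]=\rho_m\bigl(d(D,\eta)\bigr).
\]
This shows that $\rho_m$ is a chain map from the cone into $\operatorname{Der}(\mathfrak p^m)$. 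Getting the suspension signs $(-1)^{|a|}$ to match here is routine but needs care.

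Next comes bracket compatibility. When a strict dg-Lie model of the stabilizer exists, the bracket on pairs is
\[
 [(D,\eta),(D',\eta')]=\bigl([D,D'],\,D\eta'-(-1)^{|D||D'|}D'\eta+[\eta,\eta']\bigr),
\]
possibly with an additional homotopy term. The identities $[\operatorname{ad}_\eta,D']=-\operatorname{ad}_{D'\eta}$ and $[\operatorname{ad}_\eta,\operatorname{ad}_{\eta'}]=\operatorname{ad}_{[\eta,\eta']}$ then show that $\rho_m$ preserves brackets, so it is a strict dg-Lie action. I expect this step to be the main obstacle, because the cone need not carry this semidirect bracket (compare the Fiorenza--Manetti remark in the proof of Lemma~\ref{lem:homotopy-stabilizer-cmp}). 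In general I will only claim that $\rho_m$ is the linear component of an $L_\infty$-morphism $\mathfrak a_m\to\operatorname{Der}(\mathfrak p^m)$.

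For the invariant statement I will argue through formal moduli problems, as in Lemma~\ref{lem:homotopy-stabilizer-cmp}. The group-level action of $\exp(\mathfrak a)$ on $\MC(\mathfrak p)$ restricts to an action of the homotopy stabilizer of $m$ on the formal neighbourhood of $m$. That neighbourhood is controlled by $\mathfrak p^m$, via the map $x\mapsto m+x$. Differentiating this restricted action gives an $L_\infty$-action of $\mathfrak a_m$ on $\mathfrak p^m$. Its linear part is $D+\operatorname{ad}_\eta$, because an element of the stabilizer moves $m$ by $D(m)$ and then gauges it back along $\eta$, and the gauge action contributes $\operatorname{ad}_\eta$ after twisting. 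The statement about contractible choice of model follows from the uniqueness of homotopy fibres and of transferred $L_\infty$-structures up to a contractible space of equivalences.
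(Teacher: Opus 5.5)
Your proposal is correct and follows the paper's proof. The invariant action comes from differentiating the $\mathfrak a$-action on the Maurer--Cartan functor at $m$, and the strict check is the same cancellation of $[d_m,D]=-\operatorname{ad}_{D(m)}$ against $[d_m,\operatorname{ad}_\eta]=\operatorname{ad}_{d_m\eta}$; your general-degree identity $[d_m,\rho_m(D,\eta)]=\rho_m(d(D,\eta))$ is a correct, sign-consistent strengthening. The bracket step you flag as the main obstacle is in fact harmless in the strict setting: the cone differential \eqref{eq:cone-differential-cmp} is exactly the twist of the semidirect product $\mathfrak a\ltimes\mathfrak p$ by its Maurer--Cartan element $(0,m)$, so the semidirect bracket is compatible with it, and $\rho_m$ is the twist of the dg-Lie map $\mathfrak a\ltimes\mathfrak p\to\Der(\mathfrak p)$, $(a,\eta)\mapsto a+\operatorname{ad}_\eta$, landing in $\Der(\mathfrak p^m)$.
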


\begin{proof}
The action of \(\mathfrak a\) on \(\mathfrak p\) differentiates the
Maurer--Cartan functor, hence the homotopy stabilizer of \(m\) acts on
the tangent complex to the Maurer--Cartan space at \(m\).  This tangent
complex is precisely the twisted dg-Lie algebra \(\mathfrak p^m\).  This
gives the invariant \(L_\infty\)-action.

In a strict cone model, let \((D,\eta)\) be a closed degree-zero
stabilizer element.  Thus \(D\) is a closed derivation of
\(\mathfrak p\) and
\(
d_m\eta=D(m).
\)
We check that
\(
\rho_m(D,\eta)=D+\operatorname{ad}_\eta
\)
commutes with \(d_m\).  Since \(D\) is a derivation of the original
dg-Lie algebra,
\[
[d_m,D]=-[D(m),-]=-\operatorname{ad}_{D(m)}.
\]
On the other hand, by the Cartan identity in a dg-Lie algebra,
\[
[d_m,\operatorname{ad}_\eta]
=
\operatorname{ad}_{d_m\eta}
=
\operatorname{ad}_{D(m)}.
\]
The two terms cancel, so
\(
[d_m,\rho_m(D,\eta)]=0.
\)
Hence \(\rho_m(D,\eta)\) is a chain derivation of \(\mathfrak p^m\).

If the homotopy stabilizer and its action are represented by strict
dg-Lie models, this construction is compatible with brackets and gives
a strict dg-Lie action.  Without such strict choices, the bracket
compatibility is encoded by the higher coherence data of the associated
\(L_\infty\)-action.  The strict formula above is the degree-one
component of that invariant action.
\end{proof}

\begin{theorem}[Stabilizer--transport principle]\label{thm:stabilizer-transport-cmp}
Let $\mathbb U\to\bbT_S$ be a derived Lie algebroid acting on $\mathfrak p$ as in Lemma~\ref{lem:stabilizer-action-cmp}, and let
\[
 \mathbb U_m:=\Stab^{\mathrm h}_{\mathbb U}(m)
\]
be the corresponding homotopy-stabilizer algebroid.  A flat splitting
\(
 s:\bbT_S\to\mathbb U_m
\)
induces:
\begin{enumerate}[label=\textup{(\roman*)},leftmargin=2.2em]
\item a flat derived connection on $\mathfrak p^m$ obtained by composing~$s$ with the canonical stabilizer action;
\item a flat adjoint connection on the vertical fibre
\(
 \mathbb K_m:=\fib(\mathbb U_m\to\bbT_S)
\);
\item flat connections on the cohomology sheaves of both objects and on every complete filtration preserved by the original action.
\end{enumerate}
In a strict chart and for a local degree-zero vector field~$\xi$, if
\(
 s(\xi)=(D_\xi,\eta_\xi)
\), then the two connections are represented by
\begin{align}
 \nabla^m_\xi
 &=D_\xi+\operatorname{ad}_{\eta_\xi},
 \label{eq:corrected-transport-cmp}\\
 \nabla^s_\xi(k)
 &=[s(\xi),k].
 \label{eq:adjoint-transport-cmp}
\end{align}
\end{theorem}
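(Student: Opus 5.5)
The plan is to reduce everything to Lemma~\ref{lem:stabilizer-action-cmp}, applied fibrewise over~$S$, and then to use functoriality of derived Lie algebroid morphisms. First, I would promote the lemma from derived Lie algebras to algebroids over~$\bbT_S$. The homotopy stabilizer $\mathbb U_m$ is formed in the localized category of anchored derived Lie objects, exactly as in Theorem~\ref{thm:intrinsic-controller-cmp}. It therefore inherits a canonical $L_\infty$-action on the twisted object $\mathfrak p^m$, in which the anchor component acts on coefficients through the symbol. I would read this action as a morphism of derived Lie algebroids
\[
 \rho_m:\mathbb U_m\longrightarrow \mathcal D^1(\mathfrak p^m),
\]
where the target is the Atiyah-type algebroid of first-order differential operators on $\mathfrak p^m$ whose symbol lies in~$\bbT_S$. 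Composing with a flat splitting $s$, which is a morphism in $\mathrm{LieAlgd}(S)_{/\bbT_S}$ by Definition~\ref{def:derived-flat-splittings-cmp}, gives a morphism $\rho_m\circ s:\bbT_S\to\mathcal D^1(\mathfrak p^m)$ with identity anchor. By definition this is a flat derived connection, which proves~(i).

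Second, I would verify the strict formula~\eqref{eq:corrected-transport-cmp}. For $s(\xi)=(D_\xi,\eta_\xi)$, the proof of Lemma~\ref{lem:stabilizer-action-cmp} already shows $[d_m,D_\xi+\operatorname{ad}_{\eta_\xi}]=0$. The Leibniz rule in $\cO_S$ holds because $D_\xi$ has symbol $\xi$ and $\operatorname{ad}_{\eta_\xi}$ is $\cO_S$-linear. Flatness needs the curvature $[\nabla^m_\xi,\nabla^m_\zeta]-\nabla^m_{[\xi,\zeta]}$ to vanish. Expanding it gives
\[
 [D_\xi,D_\zeta]-D_{[\xi,\zeta]}
 +\operatorname{ad}_{D_\xi\eta_\zeta-D_\zeta\eta_\xi+[\eta_\xi,\eta_\zeta]-\eta_{[\xi,\zeta]}}.
\]
This expression is precisely $\rho_m$ applied to the bracket defect of~$s$ in the cone model, because the induced stabilizer bracket has $\eta$-component $D\eta'-D'\eta+[\eta,\eta']$. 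It therefore vanishes in a strict model. In a general $L_\infty$ model it is exact, with null-homotopy given by the binary component of $s$ pushed through~$\rho_m$, as in Proposition~\ref{prop:crossed-splitting-cmp}.

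Third, for (ii) I would use the adjoint action. $\mathbb K_m$ is a vertical Lie ideal, being the fibre of a morphism of algebroids, so $\mathbb U_m$ acts on it by the bracket. Pulling this action back along $s$ gives~\eqref{eq:adjoint-transport-cmp}. Flatness follows from the Jacobi identity, $[s\xi,[s\zeta,k]]-[s\zeta,[s\xi,k]]=[[s\xi,s\zeta],k]$, together with $[s\xi,s\zeta]=s[\xi,\zeta]$, again up to coherent homotopy in the $L_\infty$ case. For (iii), a flat derived connection commutes with the differential up to the coherences, so it descends to a strictly flat connection on cohomology sheaves. Since the original action is filtered and $\eta_\xi$ lies in $F^1$ with $[F^1,F^r]\subseteq F^r$, the operator $D+\operatorname{ad}_\eta$ preserves every $F^r$, and the induced graded pieces carry flat connections too.

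I expect the main obstacle to be the invariant part of~(i). Specifically, one must show that the canonical stabilizer action is a genuine morphism of derived Lie algebroids into differential operators, rather than just a linear formula, so that composition with $s$ is meaningful up to contractible choice. I would first try to realize this action as the tangent of the action of the stabilizer formal groupoid on the formal neighbourhood of $m$ in $\MC(\mathfrak p)$, with $\mathfrak p^m$ as that neighbourhood's tangent complex. Its derivative is then automatically an $L_\infty$-algebroid morphism, by the formal-moduli correspondence already used in Lemma~\ref{lem:homotopy-stabilizer-cmp}.
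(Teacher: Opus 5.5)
Your proposal is correct and follows essentially the paper's route: composing the stabilizer representation of Lemma~\ref{lem:stabilizer-action-cmp} with the flat splitting gives (i), the Jacobi identity on the vertical ideal $\mathbb K_m$ gives (ii), and commutation with the differentials together with the bracket estimate on the filtration gives (iii). The one addition is that you check flatness of $\nabla^m$ explicitly through the semidirect bracket $\bigl([D,D'],\,D\eta'-D'\eta+[\eta,\eta']\bigr)$ on the cone. This is legitimate here, since twisting $\mathfrak a\ltimes\mathfrak p$ by $(0,m)$ reproduces exactly the cone differential of Lemma~\ref{lem:homotopy-stabilizer-cmp}, whereas the paper simply invokes that a composite of derived Lie algebroid morphisms is a representation.
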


\begin{proof}
Lemma~\ref{lem:stabilizer-action-cmp} gives an \(L_\infty\)-representation
of \(\mathbb U_m\) on \(\mathfrak p^m\).  A flat splitting is a morphism
of derived Lie algebroids, so composition gives a representation
of~\(\bbT_S\), which is precisely a flat derived connection.  In a strict
chart, the induced action on the underlying twisted complex is represented
by the operator~\eqref{eq:corrected-transport-cmp}.

The vertical fibre is an ideal in~$\mathbb U_m$.  Hence the adjoint action restricts to it.  In a strict model, the curvature of~\eqref{eq:adjoint-transport-cmp} is
\begin{align*}
 R_{\nabla^s}(\xi,\zeta)(k)
 &= [s(\xi),[s(\zeta),k]]
   -[s(\zeta),[s(\xi),k]]
   -[s([\xi,\zeta]),k]\\
 &=\bigl[[s(\xi),s(\zeta)]-s([\xi,\zeta]),k\bigr],
\end{align*}
by the graded Jacobi identity.  It vanishes because~$s$ is flat.  In an $L_\infty$ model the same assertion is the representation identity, with the higher components of~$s$ supplying the coherent null-homotopies.

Both connections commute with the internal differentials, so they descend to cohomology.  If the action is filtered, then $D_\xi$ and $\operatorname{ad}_{\eta_\xi}$ preserve the filtration; the latter assertion follows from the bracket estimate on the filtration.  Hence the induced cohomological connections are filtered and flat.
\end{proof}

\begin{corollary}[Transport of Poisson deformation and symmetry cohomology]\label{cor:poisson-cohomology-transport-cmp}
Let $s$ be a flat shifted Poisson unfolding.  In a Hamiltonian chart write
\(
 s(\xi)=(D_\xi,\eta_\xi)
\).
Then
\begin{equation}\label{eq:poisson-connection-cmp}
 \nabla^\pi_\xi
 =\Lieder_{D_\xi}+\operatorname{ad}_{\eta_\xi}
\end{equation}
defines flat transport on the full twisted polyvector complex and preserves its weight filtration.  In particular it restricts to the deformation and extended Poisson complexes
\[
 \mathfrak g^{\mathrm{def}}_\pi=F^2\mathfrak{pol}_n^\pi,
 \qquad
 \mathfrak g^{\mathrm{ext}}_\pi=F^1\mathfrak{pol}_n^\pi,
\]
and induces flat connections on their cohomology sheaves and on $\mathcal H^i(\kpi)$.

In particular, if these cohomology sheaves are locally free of finite rank over a smooth complex analytic base, their horizontal sections form local systems.
\end{corollary}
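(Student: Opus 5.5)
The plan is to obtain the corollary by specializing Theorem~\ref{thm:stabilizer-transport-cmp}. The data are: the action of $D^1_{\bas}(\hhpi/S)$ (equivalently, of $\Uder(\mathcal F_\pi/S)$) on $\mathfrak p=\mathfrak{pol}_n(A/B)$ by filtered dg-Lie derivations, which is part~(i) of Definition~\ref{def:ham-presentation-cmp}, and the Maurer--Cartan element $m=\pi$. By Definition~\ref{def:poisson-stabilizer-cmp} and Theorem~\ref{thm:intrinsic-controller-cmp}, $\uupi$ is exactly the homotopy-stabilizer algebroid $\mathbb U_m$. By Theorem~\ref{thm:classification-cmp}, a flat shifted Poisson unfolding is a flat splitting $s:\bbT_S\to\uupi$. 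Lemma~\ref{lem:stabilizer-action-cmp} then gives the canonical action on $\mathfrak p^\pi=(\mathfrak{pol}_n,d_\pi)$. In a strict chart this action is $\rho_\pi(D,\eta)=\Lieder_D+\operatorname{ad}_\eta$, and composing with $s$ yields \eqref{eq:poisson-connection-cmp}. Flatness on the full twisted complex is part~(i) of Theorem~\ref{thm:stabilizer-transport-cmp}, and flatness on $\kpi$ is part~(ii).

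Next I check that the weight filtration is preserved, which is what allows restriction to $F^2$ and $F^1$. The operator $\Lieder_{D_\xi}$ preserves each $F^r$ because the action is filtered by hypothesis. For $\operatorname{ad}_{\eta_\xi}$, the cone variable $\eta_\xi$ lies in $\mathfrak g^{\mathrm{ext}}_\pi=F^1\mathfrak{pol}_n$. The bracket estimate $[F^1,F^r]\subseteq F^{r}$ then shows that $\operatorname{ad}_{\eta_\xi}$ preserves every $F^r$. This is the point of the controller using the extended complex rather than $F^2$, as explained in Remark~\ref{rem:why-extended-cmp}. Since $\nabla^\pi_\xi$ commutes with $d_\pi$ by the cancellation computation in Lemma~\ref{lem:stabilizer-action-cmp}, it restricts to chain operators on $F^2\mathfrak{pol}_n^\pi$ and $F^1\mathfrak{pol}_n^\pi$. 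The curvature of the restricted operators is the restriction of the curvature on the whole complex, so they remain flat. Part~(iii) of Theorem~\ref{thm:stabilizer-transport-cmp} then passes everything to the cohomology sheaves.

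The main subtlety I expect is the $L_\infty$ case. There $\nabla^\pi$ is only the linear part of the action, and flatness holds up to higher homotopies supplied by the higher components of $s$. I plan to argue that these higher components are built from $\Lieder$ of base-valued forms and brackets with elements of $F^1$. Each of these preserves the filtration by the same estimate, so the filtered representation restricts and the induced connection on cohomology is strictly flat. The final assertion is then standard: a flat connection on a locally free sheaf of finite rank over a smooth complex analytic base has horizontal sections forming a local system, by the holomorphic Frobenius theorem.
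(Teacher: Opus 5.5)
Your proposal is correct and follows essentially the same route as the paper. Both arguments specialize the stabilizer--transport principle to the action on shifted polyvectors at $m=\pi$, use $\eta_\xi\in F^1$ together with $[F^1,F^r]\subseteq F^r$ to get preservation of the weight filtration, read off the statement for $\kpi$ from part (ii) of that theorem, and conclude with the standard fact about flat holomorphic connections on locally free sheaves. Your additional remarks on restricting the curvature to $F^1$ and $F^2$ and on the $L_\infty$ higher components only spell out details that the paper leaves implicit in part (iii) of that theorem.
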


\begin{proof}
Apply Theorem~\ref{thm:stabilizer-transport-cmp} to the action of the ordinary transverse controller on the completed shifted-polyvector algebra and to the Maurer--Cartan element~$\pi$.  Since $D_\xi$ is filtered and $\eta_\xi\in F^1$, the estimate
\(
 [F^1,F^r]\subseteq F^r
\)
shows that~\eqref{eq:poisson-connection-cmp} preserves every $F^r$.  The statement for the vertical controller is part~\textup{(ii)} of the theorem. Finally, it is well-known that a flat holomorphic connection on a locally free finite-rank sheaf determines a local system of horizontal sections.
\end{proof}

\begin{remark}[Relation with BRST and BV transport]\label{rem:brst-pattern-cmp}
The correction term $\operatorname{ad}_{\eta_\xi}$ is essential: the bare Lie derivative need not commute with the twisted differential.  Algebraically, the equality
\[
 [d_\pi,\Lieder_{D_\xi}]
 +[d_\pi,\operatorname{ad}_{\eta_\xi}]=0
\]
is the same stabilizer cancellation that later produces the corrected transport operator on BV observables.  The present statement is purely derived-geometric; its field-theoretic realization is developed in Sections~\ref{sec:bv-cmp} and~\ref{sec:psm}.
\end{remark}

\subsection{Atiyah--Kodaira--Spencer class, curvature, and formal moduli}\label{subsec:aks-curvature-cmp}

The anchor of the controller separates two existence questions.  First one must choose transverse directions at the level of complexes; the obstruction is an extension class.  One must then make that choice compatible with the Lie brackets; the obstruction is curvature.  Around a flat choice, the same curvature expansion becomes the Maurer--Cartan equation governing the formal moduli problem.

The anchor fits into a fibre sequence of quasi-coherent complexes
\begin{equation}\label{eq:anchor-extension-cmp}
 \kpi\longrightarrow\uupi\xrightarrow{a_\pi}\bbT_S
 \xrightarrow{\ \kappa_\pi\ }\kpi[1].
\end{equation}

\begin{definition}[Poisson Atiyah--Kodaira--Spencer class]\label{def:aks-class-cmp}
The connecting morphism
\[
 \kappa_\pi:\bbT_S\longrightarrow\kpi[1]
\]
is the \emph{Poisson Atiyah--Kodaira--Spencer class}.  When $\bbT_S$ and $\kpi$ are perfect, it determines a class
\[
 [\kappa_\pi]\in\operatorname{Ext}^1_{\cO_S}(\bbT_S,\kpi).
\]
A \emph{transverse connection} is a section
\(
 \sigma:\bbT_S\to\uupi
\)
of~$a_\pi$ in a chosen fibrant-cofibrant model of the category of complexes, or equivalently a representative of a null-homotopy of~$\kappa_\pi$.  It is not required to preserve brackets.
\end{definition}

The terminology combines the Atiyah extension viewpoint for connections with the Kodaira--Spencer interpretation of the connecting morphism in a relative tangent sequence \cite{Atiyah,Illusie}.

For a transverse connection define
\begin{equation}\label{eq:curvature-definition-cmp}
 F_\sigma(\xi,\zeta)
 :=[\sigma(\xi),\sigma(\zeta)]-\sigma([\xi,\zeta])
 \in\kpi.
\end{equation}
Since $\kpi$ is a vertical ideal, $\sigma$ also defines a covariant operator
\[
 \nabla^\sigma_\xi(k):=[\sigma(\xi),k]
\]
and hence a covariant exterior operator $d_\sigma$ on $\Omega_S^\bullet\otimes\kpi$.  Unless $\sigma$ is flat or the vertical adjoint action of its curvature vanishes, $d_\sigma$ need not square to zero.

\begin{lemma}[Curvature identities]\label{lem:curvature-identities-cmp}
For every transverse connection~$\sigma$:
\begin{enumerate}[label=\textup{(\roman*)},leftmargin=2.2em]
\item the curvature of $\nabla^\sigma$ is the vertical adjoint action
\begin{equation}\label{eq:adjoint-curvature-cmp}
 R_{\nabla^\sigma}(\xi,\zeta)
 =\operatorname{ad}_{F_\sigma(\xi,\zeta)};
\end{equation}
\item the Bianchi identity holds:
\begin{equation}\label{eq:bianchi-cmp}
 d_\sigma F_\sigma=0;
\end{equation}
\item if $\sigma'=\sigma+\alpha$ with $\alpha\in\Omega_S^1\otimes\kpi$, then
\begin{equation}\label{eq:curvature-change-cmp}
 F_{\sigma'}
 =F_\sigma+d_\sigma\alpha+\frac12[\alpha,\alpha].
\end{equation}
\end{enumerate}
In a genuine $L_\infty$ model these identities are replaced by the corresponding full $L_\infty$ curvature and Bianchi identities.
\end{lemma}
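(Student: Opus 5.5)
We work in a strict chart, where $\uupi$ is a dg-Lie algebroid and $\kpi=\ker(a_\pi)$ is a vertical ideal. For a transverse connection $\sigma$ we use three basic facts. First, $a_\pi$ is a Lie algebra morphism, so $a_\pi F_\sigma=[\xi,\zeta]-[\xi,\zeta]=0$ and $F_\sigma$ really takes values in $\kpi$. Second, $\kpi$ is bracket-closed and $\cO_S$-linear, so $\operatorname{ad}_k$ is $\cO_S$-linear for $k\in\kpi$. Third, the Leibniz rule $[\sigma(\xi),fk]=\xi(f)k+f[\sigma(\xi),k]$ holds, so $\nabla^\sigma$ is a genuine connection and $d_\sigma$ is defined on $\Omega_S^\bullet\otimes\kpi$ by the usual Koszul formula. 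The three identities are then formal consequences of the graded Jacobi identity, in the same way as the curvature computation in the proof of Theorem~\ref{thm:stabilizer-transport-cmp}.

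For (i), we expand $R_{\nabla^\sigma}(\xi,\zeta)(k)=[\sigma\xi,[\sigma\zeta,k]]-[\sigma\zeta,[\sigma\xi,k]]-[\sigma[\xi,\zeta],k]$. By Jacobi the first two terms combine to $[[\sigma\xi,\sigma\zeta],k]$, which leaves $[F_\sigma(\xi,\zeta),k]$. This is precisely the computation already displayed for flat $s$, now without assuming the bracket defect vanishes.

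For (ii), we write $d_\sigma F_\sigma(\xi_1,\xi_2,\xi_3)$ by the Koszul formula as the cyclic sum of $[\sigma\xi_i,F(\xi_j,\xi_k)]$ minus the cyclic sum of $F([\xi_i,\xi_j],\xi_k)$. Substituting the definition of $F_\sigma$ gives three kinds of terms:
\begin{itemize}
\item the cyclic sum of $[\sigma\xi_i,[\sigma\xi_j,\sigma\xi_k]]$, which vanishes by Jacobi in $\uupi$;
\item the terms $\sigma([\xi_i,[\xi_j,\xi_k]])$, which vanish by Jacobi in $\bbT_S$;
\item the mixed terms $[\sigma\xi_i,\sigma[\xi_j,\xi_k]]$, which appear once from the first sum and once with the opposite sign from $F([\xi_i,\xi_j],\xi_k)$, and so cancel pairwise.
\end{itemize}
This is the standard Bianchi identity for the curvature of an anchor splitting, exactly as in Atiyah's setting.

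For (iii), with $\sigma'=\sigma+\alpha$ we use bilinearity:
\[
 F_{\sigma'}(\xi,\zeta)=F_\sigma(\xi,\zeta)+[\sigma\xi,\alpha\zeta]-[\sigma\zeta,\alpha\xi]-\alpha([\xi,\zeta])+[\alpha\xi,\alpha\zeta].
\]
The middle three terms are $(d_\sigma\alpha)(\xi,\zeta)$, and the last term is $\tfrac12[\alpha,\alpha](\xi,\zeta)$ under the convention for the wedge-bracket on $\Omega_S^\bullet\otimes\kpi$.

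For the $L_\infty$ statement we argue that these identities are the arity-two and arity-three components of the curvature of the $L_\infty$ morphism $\bbT_S\to\uupi$, with higher brackets correcting each line. We defer this to the general theory cited, namely Getzler and Pridham.

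The main obstacle is sign bookkeeping when $\kpi$ and the forms are graded. This matters for the factor $\tfrac12$ in (iii) and for the precise meaning of $d_\sigma$ when $\sigma$ is not flat, since then $d_\sigma^2=\operatorname{ad}_{F_\sigma}\neq0$. Only the Koszul formula, and not $d_\sigma^2=0$, is used in (ii).
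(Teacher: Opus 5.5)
Your proposal is correct and follows essentially the same route as the paper: (i) via the Jacobi computation already used for the adjoint transport, (ii) by expanding $d_\sigma F_\sigma$ on three vector fields and cancelling with the Jacobi identities in $\uupi$ and in $\bbT_S$, and (iii) by substituting $\sigma+\alpha$ and collecting the linear and quadratic terms. Your explicit pairwise cancellation of the mixed terms $[\sigma\xi_i,\sigma[\xi_j,\xi_k]]$ in (ii) spells out a step the paper leaves implicit.
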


\begin{proof}
The first formula is the Jacobi calculation already used in the proof of Theorem~\ref{thm:stabilizer-transport-cmp}:
\[
 [\nabla^\sigma_\xi,\nabla^\sigma_\zeta]
 -\nabla^\sigma_{[\xi,\zeta]}
 =\operatorname{ad}_{[\sigma(\xi),\sigma(\zeta)]-\sigma([\xi,\zeta])}.
\]
For the Bianchi identity, expand $d_\sigma F_\sigma$ on three homogeneous vector fields.  The terms containing three lifted fields cancel by the Jacobi identity in~$\uupi$, while the terms containing brackets of base vector fields cancel by the Jacobi identity in~$\bbT_S$.  This leaves zero, with the usual Koszul signs.

Finally, substitute $\sigma+\alpha$ into~\eqref{eq:curvature-definition-cmp}.  The terms linear in~$\alpha$ are precisely $d_\sigma\alpha$, and the quadratic term is $\tfrac12[\alpha,\alpha]$.  This proves~\eqref{eq:curvature-change-cmp}.
\end{proof}

For a flat splitting~$s$, the adjoint connection is flat by~\eqref{eq:adjoint-curvature-cmp}; hence $\kpi$ is a representation of~$\bbT_S$.  Let
\begin{equation}\label{eq:def-algebra-sheaf-cmp}
 \mathcal C^\bullet_{\pi,s}
 :=C^\bullet_{\mathrm{CE}}(\bbT_S;\mathbb K_{\pi,s})
\end{equation}
be the sheaf of Chevalley--Eilenberg $L_\infty$-algebras of the base tangent algebroid with coefficients in this vertical representation, and define the global deformation algebra by
\begin{equation}\label{eq:def-algebra-invariant-cmp}
 \mathfrak{Def}_{\pi,s}
 :=\mathbf R\Gamma(S,\mathcal C^\bullet_{\pi,s}).
\end{equation}
In a strict smooth affine chart,
\begin{equation}\label{eq:def-algebra-cmp}
 \mathfrak{Def}_{\pi,s}
 \simeq
 \Tot\,\Gamma\bigl(S,\Omega_S^\bullet\otimes\kpi\bigr),
\end{equation}
with differential equal to the sum of the internal differential, the de Rham differential, and the adjoint connection.  Before taking derived global sections, $\mathcal C^\bullet_{\pi,s}$ controls the corresponding local formal moduli sheaf.

\begin{theorem}[Curvature obstruction and formal moduli]\label{thm:curvature-formal-moduli-cmp}
Let $(X/S,\pi)$ be globally controller-admissible.
\begin{enumerate}[label=\textup{(\roman*)},leftmargin=2.2em]
\item The class $\kappa_\pi$ is null-homotopic if and only if the anchor admits a transverse connection.  Such a connection is a Poisson unfolding if and only if its full Lie or $L_\infty$ curvature vanishes.
\item Let $s$ be a flat unfolding.  For every Artinian local dg algebra $R$ with nilpotent maximal ideal $\mathfrak m_R$, the formal completion of the unfolding space at~$s$ is naturally equivalent to the Deligne--Hinich--Getzler Maurer--Cartan space
\begin{equation}\label{eq:formal-completion-cmp}
 \widehat{\Unf}^{\tr}_{\Pois_n,(s)}(R)
 \simeq
 \MC_\infty\bigl(\mathfrak{Def}_{\pi,s}\otimes\mathfrak m_R\bigr).
\end{equation}
Consequently,
\(
 H^0(\mathfrak{Def}_{\pi,s}),\,
 H^1(\mathfrak{Def}_{\pi,s}),\,
 H^2(\mathfrak{Def}_{\pi,s})
\)
are, respectively, the infinitesimal automorphism, first-order deformation, and primary obstruction spaces.
\item Assume that a transverse connection exists, the vertical controller is abelian, and the induced $\bbT_S$-action on it is fixed.  For any transverse connection~$\sigma$, the Bianchi identity makes
\begin{equation}\label{eq:abelian-obstruction-cmp}
 o_\pi:=[F_\sigma]
 \in H^2\mathbf R\Gamma\bigl(S,C^\bullet_{\mathrm{CE}}(\bbT_S;\kpi)\bigr)
\end{equation}
independent of~$\sigma$.  A flat splitting exists if and only if $o_\pi=0$.  When it exists, its gauge-equivalence classes form a torsor under
\(
 H^1\mathbf R\Gamma\bigl(S,C^\bullet_{\mathrm{CE}}(\bbT_S;\kpi)\bigr)
\),
and its infinitesimal stabilizers are the corresponding $H^0$.
\end{enumerate}
\end{theorem}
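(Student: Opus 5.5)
The proof has three parts, matching the three items of Theorem~\ref{thm:curvature-formal-moduli-cmp}. Throughout, Theorem~\ref{thm:classification-cmp} identifies $\Unf^{\tr}_{\Pois_n}(X/S,\pi)$ with $\Flat^{\der}_S(\bbT_S,\uupi)$, so every statement may be proved on the splitting side.

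For~\textup{(i)}, I would use the fibre sequence~\eqref{eq:anchor-extension-cmp}. A null-homotopy of $\kappa_\pi$ is the same as a lift of $\id_{\bbT_S}$ through $a_\pi$ in the stable category of quasi-coherent complexes. In a fibrant-cofibrant model this is a chain section $\sigma$, which is exactly Definition~\ref{def:aks-class-cmp}. For the second clause, an $L_\infty$-morphism $\bbT_S\to\uupi$ over $\bbT_S$ has linear part $\sigma$. Its quadratic and higher coherence equations say that $F_\sigma$ and its higher analogues vanish, up to the homotopy components. In the strict effective case this is literally $F_\sigma=0$. In the crossed case Proposition~\ref{prop:crossed-splitting-cmp} absorbs $F_\sigma$ into $j_\pi(\Omega)$, and this is what ``full curvature'' means. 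So a transverse connection underlies a point of $\Flat^{\der}_S$ exactly when its full curvature vanishes.

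For~\textup{(ii)}, I would write an $R$-point near $s$ as $s+\alpha$ with $\alpha\in\Omega^1_S\otimes\kpi\otimes\mathfrak m_R$, plus higher $L_\infty$ components. Lemma~\ref{lem:curvature-identities-cmp}\textup{(iii)} with $F_s=0$ then gives the flatness condition
\[
 d_s\alpha+\tfrac12[\alpha,\alpha]=0 .
\]
This is the Maurer--Cartan equation in $\Tot(\Omega^\bullet_S\otimes\kpi)\otimes\mathfrak m_R$, i.e.\ in~\eqref{eq:def-algebra-cmp}. Gauge equivalences of splittings are vertical homotopies with values in $\kpi$, and they match the degree-zero gauge action. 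Higher simplices match via the Hinich--Getzler simplicial model. This gives~\eqref{eq:formal-completion-cmp} locally. The global statement follows by hyperdescent: both sides are limits over a smooth hypercover, and $\mathbf R\Gamma$ commutes with $\MC_\infty$ for pronilpotent coefficients. The identification of $H^0,H^1,H^2$ is the standard reading of tangent and obstruction spaces of a formal moduli problem \cite{Pridham}.

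For~\textup{(iii)}, the vertical controller is abelian and the $\bbT_S$-action on it is fixed, so $d_\sigma$ is independent of $\sigma$. Lemma~\ref{lem:curvature-identities-cmp}\textup{(i)} gives $R_{\nabla^\sigma}=\operatorname{ad}_{F_\sigma}=0$, so $d_\sigma^2=0$. Bianchi~\eqref{eq:bianchi-cmp} makes $F_\sigma$ a cocycle. Formula~\eqref{eq:curvature-change-cmp} reduces to $F_{\sigma+\alpha}=F_\sigma+d\alpha$, so the class $o_\pi$ is independent of $\sigma$. Moreover $o_\pi=0$ exactly when some $\alpha$ kills $F_\sigma$, which by~\textup{(i)} means a flat splitting exists. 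Two flat splittings differ by a $d$-closed $\alpha$. Gauge equivalence is modification by $d\beta$ with $\beta\in\Gamma(\kpi)$, giving the $H^1$-torsor, and the infinitesimal stabilizers are the closed $\beta$, i.e.\ $H^0$. Global sections are passed to by the \v{C}ech/derived-section version of the same argument, replacing local choices of $\sigma$ by a twisting cochain.

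The main obstacle is the global part of~\textup{(iii)}, and relatedly of~\textup{(ii)}. Transverse connections exist only locally, so $F_\sigma$ must be replaced by a total cocycle in the \v{C}ech--CE double complex. I would handle this through the null-homotopy of $\kappa_\pi$ already produced in~\textup{(i)}. The remaining steps are essentially formal from Lemma~\ref{lem:curvature-identities-cmp}.
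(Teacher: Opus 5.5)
Your proposal is correct and follows the paper's proof essentially step for step. In \textup{(i)} you use the splitting criterion for the anchor fibre sequence together with the $L_\infty$-morphism coherences; in \textup{(ii)} you perturb to $s+\alpha$ and use the curvature-change formula with vertical gauge transformations; in \textup{(iii)} abelianness kills $\tfrac12[\alpha,\alpha]$ and $\operatorname{ad}_{F_\sigma}$, so the Bianchi identity yields a $\sigma$-independent class and an $H^1$-torsor. The only difference is that you add a hyperdescent step in \textup{(ii)} and a \v{C}ech/twisting-cochain globalization in \textup{(iii)}, whereas the paper works in a strict chart with the single transverse connection supplied by the hypothesis of \textup{(iii)} and does not discuss globalization; your extra care is consistent with its argument.
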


\begin{proof}
The first statement is the standard splitting criterion for the fibre sequence~\eqref{eq:anchor-extension-cmp}: the connecting morphism is null-homotopic exactly when the identity of~$\bbT_S$ lifts through~$a_\pi$ in the derived category.  Once a chain-level splitting has been chosen, one must in an $L_\infty$ model also choose its higher components; the resulting datum is a morphism of derived Lie algebroids exactly when its full Maurer--Cartan curvature vanishes.

For~\textup{(ii)}, a deformation of~$s$ over~$R$ is represented, after choosing a local strict model, by
\[
 s_R=s+\alpha,
 \qquad
 \alpha\in
 \bigl(\mathfrak{Def}_{\pi,s}\otimes\mathfrak m_R\bigr)^1.
\]
Because $\mathfrak m_R$ is nilpotent, the Maurer--Cartan series is finite in every Artinian test algebra.  Formula~\eqref{eq:curvature-change-cmp} gives in a dg-Lie chart
\[
 F_{s_R}
 =d_{\pi,s}\alpha+\frac12[\alpha,\alpha],
\]
and the transferred higher brackets give the full $L_\infty$ Maurer--Cartan equation in an arbitrary model.  Vertical degree-zero gauge transformations give precisely the Deligne--Hinich--Getzler equivalence relation.  This proves~\eqref{eq:formal-completion-cmp}; the interpretation of $H^0,H^1,H^2$ is the standard tangent-obstruction theory of formal moduli problems in characteristic zero \cite{Getzler,Pridham}.

For~\textup{(iii)}, fix one transverse connection.  Abelianness implies that the adjoint action of a vertical one-form is zero.  Hence the induced base representation and its Chevalley--Eilenberg differential are independent of the chosen transverse connection, and $d_\sigma^2=0$.  Lemma~\ref{lem:curvature-identities-cmp} gives $d_\sigma F_\sigma=0$ and
\(
 F_{\sigma+\alpha}=F_\sigma+d_\sigma\alpha
\).
Therefore~\eqref{eq:abelian-obstruction-cmp} is independent of~$\sigma$.  It vanishes exactly when one can choose~$\alpha$ with $F_{\sigma+\alpha}=0$.  The difference of two flat splittings is a closed vertical one-form, and a vertical gauge translation changes it by an exact one.  This proves the torsor and stabilizer statements.
\end{proof}

\subsection{Classical Gauss--Manin realization}\label{subsec:gauss-manin-cmp}

The abstract transport becomes familiar in the classical smooth proper case.  A projectable lift of a base vector field differentiates relative forms by Lie derivative; changing the lift by a vertical vector field changes that operator by a Cartan homotopy.  This is the classical mechanism behind the Gauss--Manin connection of Katz and Oda \cite{KatzOda}.

\begin{theorem}[Gauss--Manin realization]\label{thm:gauss-manin-realization-cmp}
Let $p:X\to S$ be either a smooth proper morphism of smooth complex algebraic varieties or a smooth proper holomorphic submersion of complex manifolds.  Suppose that the classical realization of a non-shifted Poisson or symplectic unfolding acts on the relative de Rham complex by Lie derivatives along projectable lifts of base vector fields.

For a local vector field $\xi$ on~$S$, choose any local projectable lift $\widetilde\xi$ on~$X$.  Then, for a relative closed form representative $\alpha$,
\begin{equation}\label{eq:gauss-manin-formula-cmp}
 \nabla^{\GM}_\xi[\alpha]
 :=[\Lieder_{\widetilde\xi}\alpha]
\end{equation}
defines a connection on the relative de Rham cohomology sheaves
\[
 \mathcal H^q_{\mathrm{dR}}(X/S)
 :=R^q p_*\Omega^\bullet_{X/S}.
\]
It is independent of the chosen projectable lift, is flat, and agrees with the classical Gauss--Manin connection.  

If the unfolding lies in the symplectic stabilizer, then
\begin{equation}\label{eq:symplectic-class-horizontal-cmp}
 \nabla^{\GM}[\omega]=0.
\end{equation}
%Thus the classical realization of the derived stabilizer transport recovers the usual local system of fibrewise de Rham cohomology.
\end{theorem}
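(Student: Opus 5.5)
The plan is to work locally on $S$ and reduce everything to the Cartan calculus for projectable vector fields on the relative de Rham complex, then globalize through the proper pushforward. First, fix a local base vector field $\xi$ and a projectable lift $\widetilde\xi$, so that $dp(\widetilde\xi)=p^*\xi$. The projectability argument of Subsection~\ref{subsec:hamiltonian-variation} shows that $[\widetilde\xi,Z]$ is relative whenever $Z$ is relative. Hence $\Lieder_{\widetilde\xi}$ preserves the ideal generated by $p^*\Omega^1_S$ in $\Omega^\bullet_X$. It therefore descends to an operator on $\Omega^\bullet_{X/S}$ that commutes with $d_{X/S}$ and satisfies the Leibniz rule $\Lieder_{\widetilde\xi}(p^*f\,\alpha)=p^*(\xi f)\alpha+p^*f\,\Lieder_{\widetilde\xi}\alpha$. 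Applying $R^qp_*$ then gives a first-order operator on $\mathcal H^q_{\mathrm{dR}}(X/S)$. Since projectable lifts exist only locally on $X$, I would realize this at the level of a Čech--de Rham (or Dolbeault) resolution. Lifts on an open cover differ on overlaps by relative vector fields, and these differences are absorbed using the homotopy in the next step. This is the analogue of the transverse connection $\sigma$ in Definition~\ref{def:aks-class-cmp}.

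Second, prove independence of the lift. If $\widetilde\xi'=\widetilde\xi+V$ with $V$ relative, Cartan's formula on the relative complex gives $\Lieder_V=d_{X/S}\iota_V+\iota_V d_{X/S}$. So the two operators are chain homotopic and agree on cohomology. This is the classical shadow of the cone variable: changing a lift by a relative field is null-homotopic, as in Remark~\ref{rem:why-extended-cmp}. The same homotopies glue the local operators into a well-defined global connection $\nabla^{\GM}$ on $\mathcal H^q_{\mathrm{dR}}(X/S)$. $\mathcal O_S$-linearity in $\xi$ holds up to homotopy as well, because $\widetilde{f\xi}$ and $p^*f\,\widetilde\xi$ differ by zero.

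Third, flatness. For lifts $\widetilde\xi,\widetilde\zeta$, the bracket $[\widetilde\xi,\widetilde\zeta]$ is again projectable and lifts $[\xi,\zeta]$. Then $[\Lieder_{\widetilde\xi},\Lieder_{\widetilde\zeta}]=\Lieder_{[\widetilde\xi,\widetilde\zeta]}$ holds exactly, and by lift independence this represents $\nabla^{\GM}_{[\xi,\zeta]}$. Hence the curvature vanishes on cohomology. When the lifts come from a flat unfolding, this is just Theorem~\ref{thm:stabilizer-transport-cmp}(iii) applied to the de Rham action. Even without flatness, the curvature $F_\sigma$ is vertical and acts trivially on cohomology by Cartan's formula.

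Fourth, compare with the classical connection. Katz--Oda define $\nabla^{\GM}$ as the connecting map of the filtration $p^*\Omega^1_S\otimes\Omega^{\bullet-1}_{X/S}\to\Omega^\bullet_X/F^2\to\Omega^\bullet_{X/S}$. Given a relative closed $\alpha$, choose an absolute lift $\hat\alpha$. Then $d\hat\alpha$ defines a class in $\Omega^1_S\otimes\mathcal H^q$, and contracting with $\widetilde\xi$ gives $\iota_{\widetilde\xi}d\hat\alpha\equiv\Lieder_{\widetilde\xi}\hat\alpha$ modulo $d_{X/S}$-exact terms. This yields \eqref{eq:gauss-manin-formula-cmp}. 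I expect this comparison, done compatibly with the Čech gluing and with signs, to be the main obstacle. I would first verify it on a cover where the lifts exist, then check that the cocycle corrections match the Čech components of the connecting map.

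Finally, for the symplectic statement, a point of the stabilizer gives $\Lieder_{D_\xi}\omega=d_{\mathrm{cl}}\lambda_\xi$ as in \eqref{eq:symplectic-cone-equation-cmp}. For $n=0$ its image in the relative de Rham complex says that $\Lieder_{\widetilde\xi}\omega$ is $d_{X/S}$-exact, so $\nabla^{\GM}[\omega]=0$, which is \eqref{eq:symplectic-class-horizontal-cmp}. This is also the last assertion of Theorem~\ref{thm:poisson-symplectic-cmp}.
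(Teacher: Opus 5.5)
Your route is the paper's. You descend $\Lieder_{\widetilde\xi}$ to $\Omega^\bullet_{X/S}$, use $\Lieder_V=[d_{X/S},\iota_V]$ for lift independence, note that the curvature of the lifted operators is vertical, identify the result with Katz--Oda, and read off horizontality of $[\omega]$ from the stabilizer primitive. Your check that contracting $d\hat\alpha$ with $\widetilde\xi$ gives $\Lieder_{\widetilde\xi}\hat\alpha$ modulo $d_{X/S}$-exact terms fills in the Katz--Oda comparison, which the paper only asserts.

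The genuine gap is in your last step, and your own \v{C}ech set-up creates it. If the lifts $\widetilde\xi_i$ exist only on an open cover of $X$, then in the \v{C}ech--de Rham model $\nabla^{\GM}_\xi[\omega]$ has two parts. One is $(\Lieder_{\widetilde\xi_i}\omega)_i$. The other is the overlap components $\pm\iota_{V_{ij}}\omega$, with $V_{ij}=\widetilde\xi_j-\widetilde\xi_i$. Local equations $\Lieder_{\widetilde\xi_i}\omega=d_{X/S}\lambda_i$ control only the first part.

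This is not a sign-chasing issue. By relative holomorphic Darboux, every family of holomorphic symplectic surfaces admits local lifts with $\Lieder_{\widetilde\xi_i}\omega=0$. Yet for a non-isotrivial family of K3 surfaces $\nabla^{\GM}[\omega]\neq0$. Its $H^{1,1}$-component is the Kodaira--Spencer class contracted with $\omega$, which is exactly the class of $(\iota_{V_{ij}}\omega)$. So local stabilizer data alone cannot give the conclusion.

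To close the gap, use that the flat splitting of $\uuomega$ over $U\subset S$ provides the stabilizer pair $(D_\xi,\lambda_\xi)$ on all of $p^{-1}(U)$. In a crossed model, you would instead need \v{C}ech data whose gluing terms are themselves stabilized, so that the whole \v{C}ech--de Rham cocycle is exact. Then take $\widetilde\xi:=D_\xi$, which your second step allows. The overlap terms disappear, and $\Lieder_{D_\xi}\omega=d_{X/S}\lambda_\xi$ becomes an identity of relative forms on $p^{-1}(U)$. This is what the paper's proof tacitly does, and with such a global lift your \v{C}ech machinery is unnecessary.

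Relatedly, in the \v{C}ech setting your flatness step is not literally lift independence. The commutator of the corrected operators $\Lieder_{\widetilde\xi_{i_0}}\pm\iota_{V^\xi}$ and $\Lieder_{\widetilde\zeta_{i_0}}\pm\iota_{V^\zeta}$ has cross terms that need a further homotopy computation. Alternatively, deduce flatness from Katz--Oda integrability once your fourth step is done. With a global lift, your third step reduces to the paper's observation: $[\widetilde\xi,\widetilde\zeta]-\widetilde{[\xi,\zeta]}$ is vertical, so it acts by a Cartan null-homotopic operator.
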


\begin{proof}
A projectable vector field preserves the differential ideal generated by $p^*\Omega_S^1$, so its Lie derivative descends to an operator on $\Omega^\bullet_{X/S}$.  It commutes with the relative de Rham differential.  Hence~\eqref{eq:gauss-manin-formula-cmp} is defined on relative de Rham cohomology and satisfies the Leibniz rule
\[
 \nabla^{\GM}_\xi(f[\alpha])
 =\xi(f)[\alpha]+f\nabla^{\GM}_\xi[\alpha].
\]

If $\widetilde\xi'$ is another projectable lift, then
\(
 V:=\widetilde\xi'-\widetilde\xi
\)
is vertical.  On relative forms, Cartan's formula gives
\[
 \Lieder_V=[d_{X/S},\iota_V].
\]
Thus the two operators induce the same endomorphism of relative de Rham cohomology.  This proves independence of the lift.

For two base vector fields, the curvature of the chosen lifted operators is
\[
 [\Lieder_{\widetilde\xi},\Lieder_{\widetilde\zeta}]
 -\Lieder_{\widetilde{[\xi,\zeta]}}
 =\Lieder_{F(\xi,\zeta)},
\]
where
\(
 F(\xi,\zeta)=[\widetilde\xi,\widetilde\zeta]-\widetilde{[\xi,\zeta]}
\)
is vertical.  Cartan's formula again makes this curvature null-homotopic on the relative de Rham complex.  Hence the induced cohomological connection is flat; for a strict flat unfolding the vector field~$F(\xi,\zeta)$ already vanishes.  Formula~\eqref{eq:gauss-manin-formula-cmp} is the Lie-derivative description of the canonical Gauss--Manin connection constructed by Katz--Oda, so the two coincide \cite{KatzOda}.

Finally, the symplectic stabilizer supplies local forms~$\lambda_\xi$ with
\(
 \Lieder_{\widetilde\xi}\omega=d_{X/S}\lambda_\xi
\).
Therefore
\(
 \nabla^{\GM}_\xi[\omega]=0
\), proving~\eqref{eq:symplectic-class-horizontal-cmp}.
\end{proof}

\begin{remark}
It follows from the theorem that the classical realization of the derived stabilizer transport
recovers the usual local system of fibrewise de Rham cohomology.  In
this sense, the flat systems produced by the Poisson controller are
derived refinements of the ordinary Gauss--Manin local system.
\end{remark}

%\begin{remark}[Coefficients and the classical shadow of quantum transport]\label{rem:gauss-manin-coefficients-cmp}
%The same proof works for a coefficient object equipped with a coherent Cartan linearization: tangent-inner changes are commutators with contraction operators and hence vanish on cohomology.  This is the coefficient version of Gauss--Manin transport established for derived foliations in \cite{CorreaUnfoldings}.  In Part~II, the star-product and BV connections reduce at classical order to the transport described here.
%\end{remark}

\part{Quantization and field theory}

\section{The filtered quantum lifting principle}
\label{sec:quantum-principle}

The preceding sections are classical.  They attach to a shifted Poisson
family \((X/S,\pi)\) a transverse controller \(\uupi\), and they identify
flat Poisson unfoldings with flat splittings
\(
 s:\bbT_S\rightarrow\uupi .
\)

Quantization asks for something stronger than a fibrewise deformation
quantization.  Quantizing each fibre produces quantum objects over the
individual points of \(S\), but it does not by itself identify those
quantum objects along the base.  A quantized unfolding is precisely such
an identification: it is flat quantum parallel transport whose classical
limit is the given Poisson transport.

The results of this section are formulated relative to a chosen
\(\planck\)-adic quantization datum.  We do not need a universal
construction of shifted deformation quantization for all shifted Poisson
stacks: what is needed is a quantized object over \(k[[\planck]]\),
complete for the \(\planck\)-adic filtration, together with a filtered
object of infinitesimal quantum symmetries whose classical limit is the
Poisson transverse controller.  The output is an obstruction theory for
lifting
\(
 s:\bbT_S\to\uupi
\)
to a flat quantum splitting
\(
 \splanck:\bbT_S\to\Uplanck .
\)
The resulting obstruction classes are the transport anomalies of the
quantized system.

\smallskip

Before we start, it is useful to separate three levels.  First, a fibrewise quantization
deforms the classical Poisson object over \(k[[\planck]]\).  Second, a
quantum symmetry extension lifts the classical transverse controller to
filtered symmetries of the quantized object.  Third, a quantized
unfolding is a flat lift of a given classical splitting.  The first two
are input data; the third is the lifting problem studied below.

\subsection{Quantum symmetry extensions}
\label{subsec:quantum-extension-cmp}

We first isolate the quantum input.  The definition is
model-independent: the quantized object may be a star-product algebra, a
quantized category, a quantum BV complex, or a factorization algebra of
quantum observables.  The common feature is an \(\planck\)-adic
filtration and a classical limit.

\begin{definition}[\(\planck\)-adic quantization datum]
\label{def:hbar-quantization-datum-cmp}
Let \((X/S,\pi)\) be globally controller-admissible.  An
\(\planck\)-adic quantization datum for \((X/S,\pi)\) consists of an
object
\(
 \Qplanck(X/S,\pi)
\)
in the chosen quantum category over \(k[[\planck]]\), equipped with a
complete decreasing filtration
\[
 F^r\Qplanck=\planck^r\Qplanck ,
\]
such that
\[
 \Qplanck\simeq
 \varprojlim_r\Qplanck/F^{r+1}\Qplanck
\]
and whose classical limit is identified with the original classical
Poisson datum
\[
 \Qplanck\otimes^{\mathbf L}_{k[[\planck]]}k
 \simeq
 (X/S,\pi)
\]
in the relevant classical category.
\end{definition}

\begin{remark}
Once again, note that the notation in Definition~\ref{def:hbar-quantization-datum-cmp} is
schematic. 
In the ordinary deformation-quantization case,
\(\Qplanck(X/S,\pi)\) is a star-product algebra
\(
(\mathcal O_X[[\planck]],\star).
\)
In a BV realization it is a quantum observable complex
\(
(\Obs^{\mathrm q}_{\planck},Q_{\planck}).
\)
For a local quantum field theory it may be a factorization algebra of
quantum observables.  The formalism below uses only the
\(\planck\)-adic completeness, the classical limit, and the existence of
a compatible filtered symmetry object.
\end{remark}

\begin{definition}[Quantum symmetry extension]
\label{def:quantum-extension-cmp}
Let \((X/S,\pi)\) be globally controller-admissible and let
\(\Qplanck(X/S,\pi)\) be an \(\planck\)-adic quantization datum.  A
\emph{quantum symmetry extension} of the Poisson controller is a
complete filtered crossed or \(L_\infty\)-Lie algebroid
\(
 \Uplanck\longrightarrow\bbT_S
\)
acting on \(\Qplanck(X/S,\pi)\) by filtered infinitesimal quantum
symmetries, together with a classical-limit morphism over \(\bbT_S\)
\[
 \clplanck:\Uplanck\longrightarrow\uupi,
 \qquad
 \Uplanck/F^1\Uplanck\simeq\uupi,
\]
where \(F^r\Uplanck=\planck^r\Uplanck\).  The morphism
\(\clplanck\) is required to be compatible with anchors, brackets,
inner symmetries and the classical action on \((X/S,\pi)\).

The associated quantum vertical kernel is
\[
 \Kplanck:=\fib(\Uplanck\to\uupi).
\]
\end{definition}

\begin{remark}[Concrete meaning of quantum symmetries]
In concrete models, \(\Uplanck\) is the Lie object of filtered
infinitesimal automorphisms preserving the quantum structure, modulo the
appropriate inner quantum symmetries.  For a star-product algebra it is
represented by filtered star-product derivations, with inner quantum
Hamiltonians
\[
 a\longmapsto \planck^{-1}[a,-]_\star .
\]
For a quantum BV complex it is represented by filtered derivations
commuting with \(Q_{\planck}\), modulo inner quantum BV homotopies.  For
a quantum factorization algebra it consists of local filtered
derivations compatible with all factorization products.
\end{remark}

The lifting problem can be pictured as the diagram
\[
\begin{tikzcd}
& \Uplanck \arrow[d,"\clplanck"] \\
\bbT_S \arrow[ur,dashed,"\splanck"] \arrow[r,"s"'] &
\uupi .
\end{tikzcd}
\]
The solid arrow \(s\) is the classical flat Poisson transport.  The
dashed arrow is the desired quantum lift.  Its existence is not
automatic, even when the fibres of \((X/S,\pi)\) have been quantized.

\begin{definition}[Quantized unfolding]
\label{def:quantized-unfolding-cmp}
Let
\(
 s:\bbT_S\rightarrow\uupi
\)
be a flat classical Poisson unfolding.  A \emph{quantized unfolding} of
\(s\) is a flat morphism of filtered derived Lie algebroids over
\(\bbT_S\)
\[
 \splanck:\bbT_S\longrightarrow\Uplanck
\]
such that
\(
 \clplanck\circ\splanck=s.
\)

Equivalently, it is flat quantum parallel transport of
\(\Qplanck(X/S,\pi)\) whose associated graded transport is the given
classical Poisson unfolding.
\end{definition}

A quantized unfolding acts on \(\Qplanck(X/S,\pi)\) by a quantum
connection, denoted \(\nablaplanck\), and by definition
\[
 \gr^0_{\planck}(\nablaplanck)=s.
\]
The nontrivial issue is the existence of \(\splanck\).  We now formulate
the order-by-order obstruction theory for this dashed lift.

\subsection{Order-by-order lifting}\label{subsec:quantum-order-cmp}

For $r\geq0$ set
\[
 \mathbb U^{(r)}_{\planck,\pi}:=\Uplanck/F^{r+1}\Uplanck.
\]
A lift modulo $\planck^{r+1}$ is a flat section
\[
 s_r:\bbT_S\longrightarrow \mathbb U^{(r)}_{\planck,\pi}
\]
reducing to~$s$.  The section $s_r$ acts by the adjoint representation on the next associated-graded kernel
\[
 \gr^{r+1}_{\planck}\Kplanck=F^{r+1}\Kplanck/F^{r+2}\Kplanck.
\]
Thus one obtains the obstruction complex
\begin{equation}\label{eq:quantum-obstruction-complex-cmp}
 \mathfrak{Obs}^{(r+1)}_{s_r}
 :=\mathbf R\Gamma\!\left(
 S,
 C^\bullet_{\CE}\bigl(\bbT_S;\gr^{r+1}_{\planck}\Kplanck\bigr)_{s_r}
 \right).
\end{equation}
In the smooth de Rham model used in the rest of the paper this is represented by
\[
 \Tot\!\left(
 \Omega_S^\bullet\otimes
 \gr^{r+1}_{\planck}\Kplanck
 \right)
\]
with differential induced by the partial quantum connection~$s_r$.  Cohomological degrees in what follows are total Chevalley--Eilenberg degrees, including the internal degree of the coefficient complex.  This invariant formulation is useful because the same complex will reappear in star-product, BV, and AKSZ realizations.

\begin{theorem}[Filtered-lifting principle]\label{thm:filtered-lifting-cmp}
Assume that $\Uplanck$ is complete and pronilpotent for the $\planck$-adic filtration and that
\[
 [F^i\Uplanck,F^j\Uplanck]\subseteq F^{i+j}\Uplanck.
\]
Let $s_r$ be a flat lift modulo $F^{r+1}$.  There is a canonical obstruction class
\begin{equation}\label{eq:quantum-obstruction-class-cmp}
 \mathfrak a_{r+1}(s_r)
 \in H^2\bigl(\mathfrak{Obs}^{(r+1)}_{s_r}\bigr)
\end{equation}
such that:
\begin{enumerate}[label=\textup{(\roman*)},leftmargin=2.2em]
\item $\mathfrak a_{r+1}(s_r)=0$ if and only if $s_r$ extends to a flat lift modulo $F^{r+2}$;
\item when the obstruction vanishes, the homotopy classes of such extensions form a torsor under
\(
 H^1\bigl(\mathfrak{Obs}^{(r+1)}_{s_r}\bigr);
\)
\item infinitesimal automorphisms of an extension are governed by
\(
 H^0\bigl(\mathfrak{Obs}^{(r+1)}_{s_r}\bigr).
\)
\end{enumerate}
If all obstruction classes vanish and the resulting tower of extensions is chosen compatibly, $\planck$-adic completeness identifies the inverse limit with a flat quantum unfolding~$\splanck$.  Its action on the quantized object is a flat quantum connection with classical limit~$s$.
\end{theorem}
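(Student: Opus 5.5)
This is standard obstruction theory for a filtered Maurer--Cartan problem; I would reduce it to the curvature calculus of Lemma~\ref{lem:curvature-identities-cmp}, applied one filtration step at a time. Work first in a strict smooth de Rham chart, where $\mathfrak{Obs}^{(r+1)}_{s_r}$ is $\Tot(\Omega_S^\bullet\otimes\gr^{r+1}_{\planck}\Kplanck)$. Then globalize by the same totalization-over-hypercovers argument used in Theorem~\ref{thm:classification-cmp}; both the Maurer--Cartan space and its tangent complexes commute with these limits.

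\emph{Step 1: the obstruction cocycle.} Given the flat lift $s_r$ modulo $F^{r+1}$, choose any chain-level lift $\tilde s:\bbT_S\to\mathbb U^{(r+1)}_{\planck,\pi}$ of the anchor section. This is possible locally because $\mathbb U^{(r+1)}\to\mathbb U^{(r)}$ is surjective with kernel $\gr^{r+1}_{\planck}\Kplanck$. Its curvature $F_{\tilde s}$ from \eqref{eq:curvature-definition-cmp} reduces to $F_{s_r}=0$, so it lies in $\Omega^2_S\otimes\gr^{r+1}_{\planck}\Kplanck$. The bracket estimate $[F^i,F^j]\subseteq F^{i+j}$ shows that $\gr^{r+1}$ is abelian modulo $F^{r+2}$, since $[F^{r+1},F^{r+1}]\subseteq F^{2r+2}\subseteq F^{r+2}$ for $r\ge0$. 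It also shows that the adjoint action of $\tilde s$ on $\gr^{r+1}$ depends only on $s_r$, and in fact only on its classical part $s$, because $[F^1,F^{r+1}]\subseteq F^{r+2}$. Consequently the differential $d_{s_r}$ squares to zero on $\Omega^\bullet_S\otimes\gr^{r+1}_{\planck}\Kplanck$. The Bianchi identity \eqref{eq:bianchi-cmp} then gives $d_{s_r}F_{\tilde s}=0$. Formula \eqref{eq:curvature-change-cmp} with $\alpha\in\Omega^1_S\otimes\gr^{r+1}$ reads $F_{\tilde s+\alpha}=F_{\tilde s}+d_{s_r}\alpha$, because the quadratic term lies in $F^{2r+2}$. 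Hence the class $\mathfrak a_{r+1}(s_r):=[F_{\tilde s}]\in H^2$ is independent of $\tilde s$.

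\emph{Step 2: items (i)--(iii).} This is the argument of Theorem~\ref{thm:curvature-formal-moduli-cmp}(iii), with $\gr^{r+1}_{\planck}\Kplanck$ as the abelian coefficient. A flat extension exists exactly when some $\alpha$ kills $F_{\tilde s}$, that is, when the class vanishes. Two flat extensions differ by a $d_{s_r}$-closed $\alpha$. Vertical gauge transformations by degree-zero sections of $\gr^{r+1}$ change $\alpha$ by $d_{s_r}$-exact terms, which gives the $H^1$-torsor. Degree-zero gauge elements fixing an extension are the $d_{s_r}$-closed ones, so infinitesimal automorphisms are governed by $H^0$. Total degrees absorb the internal differential of the coefficient complex.

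\emph{Step 3: general models and the limit.} In a crossed or $L_\infty$ model I would replace the strict curvature by the full $L_\infty$ Maurer--Cartan curvature of a morphism $\bbT_S\to\mathbb U^{(r+1)}$. The fibre sequence $\gr^{r+1}\to\mathbb U^{(r+1)}\to\mathbb U^{(r)}$ is a small square-zero extension of derived Lie algebroids. By the standard theory of \cite{Hinich,Pridham}, the fibre of the map of mapping spaces $\Flat^{\der}(\bbT_S,\mathbb U^{(r+1)})\to\Flat^{\der}(\bbT_S,\mathbb U^{(r)})$ over $s_r$ is empty or a torsor under $\MC$ of the shifted abelian CE complex. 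The obstruction is a point of the CE complex in degree $2$, and this recovers (i)--(iii) with a canonical class. I expect this to be the main obstacle: one must verify that the filtration compatibility makes the extension genuinely square-zero and abelian, so that no higher brackets of $\gr^{r+1}$ survive. The condition $[F^i,F^j]\subseteq F^{i+j}$ is used here. One must also ensure the obstruction is canonical rather than chart-dependent, which I would handle by the presentation-independence of Theorem~\ref{thm:intrinsic-controller-cmp}.

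Finally, given a compatible tower $(s_r)$, completeness $\Uplanck\simeq\varprojlim\mathbb U^{(r)}$ and the fact that the mapping space into a limit is the limit of mapping spaces give $\splanck=\varprojlim s_r$. It is a flat morphism with $\clplanck\circ\splanck=s$. Composing it with the action on $\Qplanck$, as in Theorem~\ref{thm:stabilizer-transport-cmp}, yields the flat quantum connection $\nablaplanck$ with classical limit $s$.
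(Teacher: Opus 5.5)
Your proposal is correct and follows essentially the same route as the paper. The paper also chooses a non-flat graded lift and shows its curvature lands in $\gr^{r+1}_{\planck}\Kplanck$. It then uses the Bianchi identity and the curvature-change formula, where the quadratic term dies because $[F^{r+1},F^{r+1}]\subseteq F^{2r+2}$, to obtain a well-defined class, reads off (i)--(iii) from gauge corrections, and passes to the inverse limit by completeness. Your additional remarks go slightly beyond what the paper writes but do not change the argument: that the coefficient action factors through $s$ because $[F^1,F^{r+1}]\subseteq F^{r+2}$, the local-lift/hypercover globalization, and the square-zero $L_\infty$ reformulation.
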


\begin{proof}
Choose a graded lift
\[
 \widetilde s_{r+1}:\bbT_S\longrightarrow \mathbb U^{(r+1)}_{\planck,\pi}
\]
of $s_r$; no flatness is imposed on this lift.  Its curvature is
\[
 F_{\widetilde s_{r+1}}(\xi,\zeta)
 =
 [\widetilde s_{r+1}(\xi),\widetilde s_{r+1}(\zeta)]
 -\widetilde s_{r+1}([\xi,\zeta]).
\]
Because $s_r$ is flat modulo $F^{r+1}$, this curvature lies in
\[
 F^{r+1}\Kplanck/F^{r+2}\Kplanck
 =\gr^{r+1}_{\planck}\Kplanck.
\]
It is therefore a degree-two cochain in the complex~\eqref{eq:quantum-obstruction-complex-cmp}.

The Bianchi identity for the bracket in $\Uplanck$ gives
\[
 d_{s_r}F_{\widetilde s_{r+1}}=0
\]
in the associated graded.  Hence $F_{\widetilde s_{r+1}}$ determines a cohomology class in degree~$2$.
If a different graded lift is chosen, it has the form
\[
 \widetilde s_{r+1}+\alpha,
 \qquad
 \alpha\in C^1_{\CE}\bigl(\bbT_S;\gr^{r+1}_{\planck}\Kplanck\bigr).
\]
The curvature-change formula gives, in $\gr^{r+1}_{\planck}$,
\[
 F_{\widetilde s_{r+1}+\alpha}
 =F_{\widetilde s_{r+1}}+d_{s_r}\alpha.
\]
The quadratic term $\frac12[\alpha,\alpha]$ has filtration at least $2r+2$, hence vanishes in the associated-graded step from $F^{r+1}$ to $F^{r+2}$.  Therefore the cohomology class of the curvature is independent of the graded lift.  This is the class~\eqref{eq:quantum-obstruction-class-cmp}.

The same formula shows that the class vanishes precisely when one can choose $\alpha$ so that the corrected lift has zero curvature modulo $F^{r+2}$.  If two corrections kill the curvature, their difference is a closed one-cochain; changing by a gauge transformation generated by a degree-zero element changes it by an exact one-cochain.  This gives the $H^1$-torsor and the $H^0$ automorphism statement.  Finally, pronilpotent completeness identifies a compatible tower of flat lifts with a flat lift in the inverse limit.
\end{proof}

\subsection{Transport anomalies and monodromy}\label{subsec:quantum-anomaly-cmp}

The first obstruction
\(
 \mathfrak a_1(s)\in H^2\bigl(\mathfrak{Obs}^{(1)}_{s}\bigr)
\)
is the first \emph{transport anomaly}.  It should not be confused with
a fibrewise quantization anomaly: the fibrewise quantum theories may
already exist, while the class above measures the failure to make their
infinitesimal identifications along~\(S\) flat.  This distinction is
central in the field-theoretic applications below.

\begin{corollary}[Quantum monodromy]\label{cor:quantum-monodromy-cmp}
Suppose a flat quantum unfolding $\splanck$ exists and that the chosen analytic, formal, or categorical realization admits parallel transport.  Then the fundamental groupoid of~$S$ acts by automorphisms of the fibrewise quantized objects.  The associated graded monodromy is the classical monodromy induced by~$s$.
\end{corollary}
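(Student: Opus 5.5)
The plan is to combine Theorem~\ref{thm:stabilizer-transport-cmp} with the standard dictionary between flat connections and local systems. First, I will apply the stabilizer--transport principle to the quantum controller $\Uplanck$ acting on $\Qplanck(X/S,\pi)$. The flat quantum splitting $\splanck:\bbT_S\to\Uplanck$ is a morphism of filtered derived Lie algebroids over $\bbT_S$. Composing it with the filtered action of $\Uplanck$ on the quantized object gives a representation of $\bbT_S$ on $\Qplanck$, that is, a flat $\planck$-adically filtered connection $\nablaplanck$. Every operator $\nablaplanck_\xi$ is a derivation of the quantum structure, since $\Uplanck$ acts by infinitesimal quantum symmetries. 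In a strict model this means a derivation of the star-product, or a derivation commuting with $Q_\planck$, depending on the realization.

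Second, I will use the hypothesis that the realization admits parallel transport. For a path $\gamma$ in $S$, solving the transport equation $\nablaplanck_{\dot\gamma}=0$ order by order in $\planck$ gives an isomorphism $T_\gamma:\Qplanck|_{\gamma(0)}\to\Qplanck|_{\gamma(1)}$. Completeness of the filtration reduces this to solving a linear ODE, or its formal or categorical analogue, on each quotient $\Qplanck/F^{r+1}$. Flatness gives homotopy invariance, so $T_\gamma$ depends only on the class of $\gamma$ in $\Pi_1(S)$. Uniqueness of solutions gives $T_{\gamma\gamma'}=T_\gamma T_{\gamma'}$, so $\gamma\mapsto T_\gamma$ is a functor from the fundamental groupoid.

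Next I need each $T_\gamma$ to be an automorphism of the quantized structure and not merely of the underlying module. The idea is that the structure maps are horizontal: for instance, $\nablaplanck_\xi(f\star g)-\nablaplanck_\xi f\star g-f\star\nablaplanck_\xi g=0$. Then the function $t\mapsto T_t^{-1}(T_tf\star T_tg)$ has zero derivative, so it is constant. In the BV and factorization cases the same argument applies to $Q_\planck$ and to the factorization products, because each of them commutes with the connection.

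Finally, for the associated graded statement I will use that $\clplanck\circ\splanck=s$. The induced connection on $\gr^0=\Qplanck/F^1$ is therefore the classical transport of Theorem~\ref{thm:stabilizer-transport-cmp} attached to $s$. Since $T_\gamma$ preserves the filtration, it induces maps on $\gr$, and these solve the classical transport equation, so they coincide with the classical monodromy of $s$.

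The main obstacle is making the second and third steps model-independent in the $L_\infty$ or categorical setting. There the action is only coherent, so "solving the transport equation" has to be read as integrating a $\bbT_S$-representation to a $\Pi_1(S)$-representation, that is, a Riemann--Hilbert-type step. My plan is to take exactly this as the content of the hypothesis "admits parallel transport". I would then verify the structural compatibility only in strict models, and transfer it through the equivalences of models.
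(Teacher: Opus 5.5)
Your proposal is correct and follows the same route as the paper's proof. The flat lift $\splanck$, composed with the action of $\Uplanck$, gives a flat quantum connection; its parallel transport factors through the fundamental groupoid; and reduction modulo $\planck$ recovers the classical transport induced by $s$. You also make explicit two points the paper leaves implicit: homotopy invariance (not just compatibility with concatenation of paths) comes from flatness, and horizontality of the structure maps makes each transport map an automorphism of the quantized structure rather than only of the underlying module.
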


\begin{proof}
A flat quantum unfolding gives a flat connection on the quantized object.  Parallel transport of a flat connection is functorial under concatenation of paths, and hence factors through the fundamental groupoid.  Taking $\gr^0_{\planck}$ of the connection recovers the classical splitting~$s$, so the associated graded monodromy is the classical one.
\end{proof}

\section{Deformation quantization and star-product transport}\label{sec:dq-cmp}

This section realizes the abstract lifting principle in ordinary deformation quantization.  The guiding question is concrete: \emph{given a family of Poisson algebras whose Poisson tensors are transported by a flat unfolding, when do the corresponding star-product algebras form a flat family?}

We treat three levels.  First, we identify the star-product analogue of
the transverse controller.  Second, in the symplectic case, Fedosov
theory expresses the necessary cohomological constraint and the residual
curvature obstructions in terms of the Fedosov class and formal de Rham
cohomology.  Third, for general Poisson manifolds, controller-compatible
formality is recorded as one possible mechanism for producing the
required Hochschild-level lift.  The final subsection records the induced
transport on Hochschild and cyclic invariants.

\subsection{Star-product controllers}\label{subsec:star-controller-cmp}

The star-product controller is the noncommutative analogue of the Poisson transverse controller.  Its objects are projectable derivations of the star-product algebra, and its inner symmetries are quantum Hamiltonians.  Keeping the crossed-module form is important: inner derivations are not zero as operators, but they are gauge symmetries of the noncommutative algebra.

Let $X\to S$ be a smooth ordinary Poisson family and let
\[
 A_{\planck}:=(\mathcal O_X[[\planck]],\star),
 \qquad
 f\star g=fg+\sum_{r\geq1}\planck^rB_r(f,g),
\]
be a differential star-product with
\[
 \frac{1}{\planck}(f\star g-g\star f)\bmod \planck
 =\{f,g\}_{\pi}.
\]
Here each \(B_r\) is a relative bidifferential operator, equivalently a
Hochschild \(2\)-cochain
\(
 B_r:\mathcal O_X\times\mathcal O_X\rightarrow\mathcal O_X.
\) Write $\Der_\star(A_{\planck})$ for the filtered Lie algebra of $\planck$-adically continuous star-product derivations.  A derivation $\delta\in\Der_\star(A_{\planck})$ has a classical symbol
\[
 \sigma_0(\delta):=\delta\bmod \planck\in T_{X/k}.
\]
It is \emph{basic over $S$} if the image of $\sigma_0(\delta)$ in $p^*T_S$ is pulled back from a vector field on~$S$.  We denote the corresponding Lie algebra by
\(
 D^1_{\star,\bas}(A_{\planck}/S).
\)
The anchor sends such a derivation to its base vector field.

Now, endow $A_{\planck}$ with the quantum Lie bracket
\[
 \{a,b\}_{\planck}:=\frac1{\planck}[a,b]_\star.
\]
This is well defined because the star-commutator is divisible by~$\planck$, and it reduces modulo~$\planck$ to the Poisson bracket.  Inner quantum Hamiltonians define a morphism of filtered Lie algebras
\[
 \frac1{\planck}\operatorname{ad}_\star:
 (A_{\planck},\{-,-\}_{\planck})\longrightarrow\Der_\star(A_{\planck}),
 \qquad
 a\longmapsto \frac1{\planck}[a,-]_\star.
\]
The kernel is the centre of $A_{\planck}$, so the crossed module (rather than an ordinary quotient) is the natural symmetry object.  We define
\begin{equation}\label{eq:star-controller-cmp}
 \mathbb U^\star_{\planck,\pi}
 :=\left[
 A_{\planck}
 \xrightarrow{\,\planck^{-1}\operatorname{ad}_\star\,}
 D^1_{\star,\bas}(A_{\planck}/S)
 \right].
\end{equation}
The classical limit of~\eqref{eq:star-controller-cmp} maps to~$\uupi$: the leading term of a star derivation is a projectable vector field, and the leading term of an inner quantum Hamiltonian is the classical Hamiltonian vector field.

\begin{definition}[Strict and crossed star-product transport]\label{def:star-transport-cmp}
A \emph{strict star-product transport} is a connection
\[
 \nablaplanck:\bbT_S\longrightarrow D^1_{\star,\bas}(A_{\planck}/S)
\]
by star-product derivations whose curvature vanishes:
\[
 [\nablaplanck_\xi,\nablaplanck_\zeta]
 -\nablaplanck_{[\xi,\zeta]}=0.
\]
A \emph{crossed-flat star-product transport} is a flat splitting of the crossed controller~\eqref{eq:star-controller-cmp}.  In a strict representative it is a pair $(\nablaplanck,C)$ with
\[
 R_{\nablaplanck}(\xi,\zeta)
 =\frac1{\planck}\operatorname{ad}_\star C_{\xi,\zeta}
\]
and the corresponding Bianchi equation for~$C$.  It becomes a strict transport exactly when the inner curvature can be killed by a quantum Hamiltonian gauge correction.
\end{definition}

\begin{proposition}[Star-product realization]\label{prop:star-realization-cmp}
The crossed controller~\eqref{eq:star-controller-cmp} is a concrete quantum symmetry extension of the classical Poisson controller in the ordinary deformation-quantization setting.
A strict star-product transport with classical symbol~$s$ is a quantized unfolding of the classical Poisson unfolding~$s$.  

\smallskip 

 More generally, a crossed-flat transport is a flat quantum unfolding in the quotient stack of star-product algebras modulo inner quantum gauge transformations, and it is strict precisely after the inner-curvature obstruction has been removed.
\end{proposition}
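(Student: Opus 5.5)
The plan is to verify the three clauses of the proposition in turn: (a) the crossed controller \eqref{eq:star-controller-cmp} satisfies Definition~\ref{def:quantum-extension-cmp}; (b) strict star-product transports are quantized unfoldings; (c) crossed-flat transports are unfoldings in the quotient by inner quantum gauge symmetries. Nearly all of the content lies in (a). Once $\mathbb U^\star_{\planck,\pi}$ is known to be a quantum symmetry extension, (b) and (c) follow by unpacking Definition~\ref{def:quantized-unfolding-cmp} and Proposition~\ref{prop:crossed-splitting-cmp}.

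For (a), I would first check that \eqref{eq:star-controller-cmp} is a crossed module of filtered Lie algebroids over $\bbT_S$.
\begin{enumerate}[label=\textup{(\arabic*)},leftmargin=2.2em]
\item The map $\planck^{-1}\operatorname{ad}_\star$ is a Lie morphism for $\{-,-\}_\planck$, by the Jacobi identity for the star-commutator.
\item Inner derivations have zero base symbol, since their classical symbol is the relative Hamiltonian field $\pi^\sharp(da_0)$. The composite to $\bbT_S$ therefore vanishes.
\item The Peiffer identities are $\delta\cdot a:=\delta(a)$ together with $\planck^{-1}\operatorname{ad}_\star(\delta a)=[\delta,\planck^{-1}\operatorname{ad}_\star a]$. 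Both hold because $\delta$ is a star derivation.
\end{enumerate}
The filtration is $F^r=\planck^r$. Brackets respect it, and the structure is complete because everything is $\planck$-adically continuous.

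Next, I would construct $\clplanck$ by reducing modulo $\planck$. A star derivation $\delta=\sum\planck^r\delta_r$ has $\delta_0$ a derivation of the commutative product. The $\planck^1$ term of $\delta(f\star g)=\delta f\star g+f\star\delta g$, antisymmetrized, shows that $\delta_0$ preserves $\{-,-\}_\pi$, so $\Lieder_{\delta_0}\pi=0$. Basicness makes $\delta_0$ projectable. Hence $\delta\mapsto(\delta_0,\eta=0)$ is a closed point of $D^1_{\bas,\pi}$ in the cotangent Hamiltonian chart of Example~\ref{ex:cotangent-nullhomotopy-cmp}. An inner element $a$ maps to $a_0$, and in the chart I would send it to the exact form $da_0\in\Omega^1_{X/S}$, so that $j_\pi(da_0)=(X_{da_0},-X_{da_0})$.

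There is a mismatch here: the upper component should be $(X_{a_0},0)$, but $j_\pi$ gives a nonzero cone term. I would resolve it by composing with the chain homotopy that shifts $\eta$ by $X_{a_0}$. This is legitimate because the two representatives differ by $d$ of the cone element $(0,\text{weight-one})$. The upshot is that $\clplanck$ is a morphism of crossed modules only up to a specified $2$-morphism, i.e.\ a morphism in the localized category of two-term $L_\infty$-algebroids. Compatibility with the anchor is immediate, and compatibility with the classical action holds by construction.

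I expect the main obstacle to be the identification $\Uplanck/F^1\simeq\uupi$. Surjectivity on the upper term requires lifting every projectable Poisson vector field to a star derivation modulo $\planck$, which holds tautologically at order zero. The real content is that the quotient of the derivation Lie algebra by $\planck\Der_\star$ equals the classical stabilizer, which is a statement about first-order lifts.

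I would handle this by interpreting $\Uplanck/F^1$ as the reduction functor $-\otimes^{\mathbf L}_{k[[\planck]]}k$, rather than as the naive image. In that form, the reduction of the derivation complex of $A_\planck$ is the Hochschild derivation complex of $(\cO_X,\cdot)$ twisted by the first-order data. By HKR, this becomes the extended Poisson cone $F^1\mathfrak{pol}$ with $d_\pi$. The derived reduction of the crossed module then matches $[\Omega^1_{X/S}\to D^1_{\bas,\pi}]$ on underlying complexes. Here the kernel $\cO_S$-constants of $d$ and the centre match the central isotropy $\ker(j_\pi)$, consistent with Proposition~\ref{prop:controller-structure-cmp}.

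With (a) in place, (b) and (c) are short.
\begin{itemize}
\item For (b): a strict transport $\nablaplanck$ is a strictly flat section of the upper term with $\clplanck\circ\nablaplanck=s$, hence a point of $\Flat^{\der}_S(\bbT_S,\Uplanck)$ lifting $s$, which is exactly Definition~\ref{def:quantized-unfolding-cmp}.
\item For (c): a pair $(\nablaplanck,C)$ is the strict crossed representative of Proposition~\ref{prop:crossed-splitting-cmp}, with $C$ playing the role of $\Omega_{\widetilde s}$. It therefore defines a flat splitting of the crossed controller, i.e.\ of the quotient stack by inner gauge.
\item For the strictness claim: gauge-changing $\nablaplanck$ by $\planck^{-1}\operatorname{ad}_\star\alpha$ changes $C$ by $d_\nabla\alpha+\tfrac12\{\alpha,\alpha\}_\planck$, using Lemma~\ref{lem:curvature-identities-cmp}\,(iii). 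So $C$ can be set to zero exactly when this inner-curvature class is killed, which is the asserted criterion.
\end{itemize}
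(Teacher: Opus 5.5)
The parts of your argument that match the paper's proof are fine: the crossed-module axioms for $\planck^{-1}\operatorname{ad}_\star$, the antisymmetrization of the $\planck^1$ term showing that $\delta_0$ is a projectable Poisson vector field, and the unpacking of (b) and (c). The paper stops there. It works only with leading symbols and inner Hamiltonians, and never writes down cone components or proves $\Uplanck/F^1\Uplanck\simeq\uupi$. The two steps you add beyond that both fail.

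\emph{The inner mismatch.} For an ordinary smooth family, $D^1_{\bas}(\hhpi/S)$ sits in degree $0$, and $\mathfrak g^{\mathrm{ext}}_\pi=F^1\mathfrak{pol}_0$ begins with vector fields in degree $0$. So the cone $D^1_{\bas,\pi}$ is zero in degree $-1$. Hence $(0,X_{a_0})=(X_{a_0},0)-j_\pi(d_{X/S}a_0)$ is a nonzero degree-zero cocycle that is not a coboundary. Indeed, $d$ of a weight-one element is a bivector, and the natural primitive $a_0$ (with $[\pi,a_0]=\pm X_{a_0}$) has weight zero, which $F^1$ excludes. Nor does $(0,X_{a_0})$ lie in $j_\pi(\Omega^1_{X/S})$, since $j_\pi(\beta)$ has derivation component $\iota_\pi(\beta)$.

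So under $\delta\mapsto(\delta_0,0)$, inner quantum Hamiltonians are not even sent to inner elements of $\mathfrak u_\pi$. No $2$-morphism can absorb this, because the linear component of an $L_\infty$-morphism of two-term objects must be a strict chain map. You would have to redefine the upper component on inner derivations as $\planck^{-1}\operatorname{ad}_\star a\mapsto j_\pi(d_{X/S}a_0)$. This is well defined, since central elements reduce to Casimirs $c$ with $j_\pi(d_{X/S}c)=0$. But it has no uniform formula on all of $D^1_{\star,\bas}$ and in general forces higher components.

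\emph{The identification $\Uplanck/F^1\Uplanck\simeq\uupi$.} Since $A_\planck$ and $D^1_{\star,\bas}$ are $\planck$-torsion-free, $-\otimes^{\mathbf L}_{k[[\planck]]}k$ is just the naive reduction, so reinterpreting it changes nothing. Passing to the full Hochschild complex replaces the crossed module by a different object. Even there, the reduction is the Hochschild complex of $(\cO_X,\cdot)$ with the untwisted differential. HKR then gives polyvectors with zero differential; $d_\pi$ appears only after dividing by $\planck$, as the next differential of the $\planck$-adic spectral sequence.

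In fact no such equivalence holds. In degree $-1$, the reduced star controller has the Casimirs as cohomology, and these contain $p^{-1}\cO_S$. In the cotangent chart, by contrast, $\ker j_\pi=0$ for symplectic $\pi$. The cone also has the terms $\wedge^{\geq2}T_{X/S}$ in positive degrees, which the two-term star controller lacks.

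What your torsion-freeness argument does give is an injection of $D^1_{\star,\bas}/\planck D^1_{\star,\bas}$ into projectable Poisson vector fields. Surjectivity is an all-order lifting problem, not something settled at order zero, and it fails in general. Take Fedosov star products on $M\times S\to S$ whose characteristic class genuinely varies with $t$. A star derivation with base symbol $\partial_t$ would make $\partial_t\star$ a Hochschild coboundary and hence the class constant, so none exists, although $\partial_t$ is a projectable Poisson vector field. This clause must therefore be imposed as a hypothesis, or weakened to the existence of a classical-limit morphism, rather than derived.

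Finally, in (c), strictness only requires the corrected $C$ to be central, not zero. Since the gauge equation is nonabelian, there is no single inner-curvature class. The relevant obstructions are the order-by-order classes of Theorem~\ref{thm:filtered-lifting-cmp}, which is what the paper invokes.
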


\begin{proof}
A section of the anchor of $D^1_{\star,\bas}$ is exactly a derivation-valued connection with prescribed base symbol.  The identity
\[
 \nablaplanck_\xi(f\star g)
 =\nablaplanck_\xi(f)\star g+f\star\nablaplanck_\xi(g)
\]
is the Leibniz rule for a star-product derivation.  Bracket preservation of the section is the vanishing of its curvature, hence strict flatness.  Reduction modulo~$\planck$ gives a derivation of the commutative product.  Differentiating the first-order commutator shows that this leading derivation preserves the Poisson bracket, so its class is a Poisson transverse symmetry.  With the source bracket $\{a,b\}_{\planck}=\planck^{-1}[a,b]_\star$, inner quantum Hamiltonians form a crossed module and reduce to ordinary Hamiltonian vector fields; hence they recover the inner part of the classical controller.  If the curvature of the derivation connection is inner, the pair consisting of the connection and its Hamiltonian curvature is a flat object of the crossed controller; removing this inner curvature is precisely the filtered lifting problem of Section~\ref{sec:quantum-principle}.
\end{proof}

\subsection{Fedosov transport}\label{subsec:fedosov-cmp}

In the symplectic case the abstract obstruction theory becomes especially explicit.  Fedosov quantization constructs star-products from flat connections on the Weyl bundle, and equivalence classes are encoded by a formal de Rham class~\cite{Fedosov94,Fedosov,BertelsonCahenGutt}. Therefore flat transport of Fedosov star-products has an unavoidable cohomological shadow: the characteristic class must be horizontal.  Conversely, horizontal curvature data give derivations of the Fedosov algebra, and the remaining obstruction is precisely the quantum curvature obstruction from Section~\ref{sec:quantum-principle}.

Assume that $p:X\to S$ is a smooth family of symplectic manifolds or smooth symplectic varieties with relative form~$\omega$, and let~$s$ be a flat symplectic unfolding.  We use the standard Fedosov classification and derivation theory of star-products \cite{Fedosov,BertelsonCahenGutt}.  A relative Fedosov datum consists of a symplectic connection and a Weyl-bundle connection
\[
 \mathbb D
 =-\delta+\nabla^{\mathcal W}
 +\frac1{\planck}\operatorname{ad}_{\circ}(r)
\]
with central Weyl curvature.  Its characteristic class has the form
\begin{equation}\label{eq:fedosov-class-cmp}
 c_{\planck}
 =\frac{[\omega]}{\planck}
 +c_0+\planck c_1+\planck^2c_2+\cdots
 \in H^2_{\mathrm{dR}}(X/S)((\planck)).
\end{equation}
The algebra of $\mathbb D$-flat Weyl sections is the Fedosov star-product algebra.

\begin{theorem}[Fedosov transport criterion]\label{thm:fedosov-cmp}
Let $s$ be a flat symplectic unfolding.
\begin{enumerate}[label=\textup{(\roman*)},leftmargin=2.2em]
\item If a Fedosov star-product admits strict flat transport by star-product derivations with classical symbol~$s$, then
\[
 \nabla^{\mathrm{GM}}_s c_{\planck}=0.
\]
\item Conversely, suppose $c_{\planck}$ is horizontal and is represented locally on~$S$ by Fedosov curvature data invariant under~$s$ up to exact Weyl terms.  Then every base vector field~$\xi$ admits a Fedosov lift
\[
 \widehat\nabla^{\planck}_\xi
 =\Lieder^{\mathcal W}_{Y_\xi}
 +\frac1{\planck}\operatorname{ad}_{\circ}(b_\xi),
 \qquad
 [\mathbb D,\widehat\nabla^{\planck}_\xi]=0,
\]
where $Y_\xi$ is the horizontal vector field representing~$s(\xi)$.  The operator $\widehat\nabla^{\planck}_\xi$ descends to a derivation of the Fedosov star-product algebra.
\item The curvature of the descended derivations has zero classical symbol.  Its outer class lies in
\[
 \Omega_S^2\otimes H^1_{\mathrm{dR}}(X/S)[[\planck]].
\]
If this outer class vanishes, the curvature is inner.  The remaining obstruction to correcting it to zero is the filtered class~\eqref{eq:quantum-obstruction-class-cmp}.  Hence, after the vanishing of the outer and inner obstruction classes, the Fedosov product carries strict flat quantum transport lifting~$s$.
\end{enumerate}
\end{theorem}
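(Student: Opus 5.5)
The argument follows the three items in order and rests on two inputs: the Fedosov classification of star-products by the characteristic class $c_{\planck}$, with its naturality under symplectomorphisms, and the Fedosov description of derivations of Fedosov algebras \cite{Fedosov,BertelsonCahenGutt}. The filtered-lifting principle of Theorem~\ref{thm:filtered-lifting-cmp} is then used for the last step.

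For~(i), I would exponentiate a strict flat transport $\nablaplanck$ locally along a base vector field $\xi$, formally or in $\planck$-adic order. This gives a family of star-algebra isomorphisms between the fibres, covering the flow of the classical symbol $Y_\xi$. Naturality of the Fedosov class gives $\phi_t^*c_{\planck}(X_{t})=c_{\planck}(X_0)$. Differentiating at $t=0$ yields $[\Lieder_{Y_\xi}c_{\planck}]=0$ in relative de Rham cohomology. By Theorem~\ref{thm:gauss-manin-realization-cmp} this is $\nabla^{\GM}_\xi c_{\planck}=0$. The leading term $[\omega]/\planck$ is already handled by Theorem~\ref{thm:poisson-symplectic-cmp}.

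For~(ii), I would start from $Y_\xi$ and set $\Lieder^{\mathcal W}_{Y_\xi}$ to be its natural lift to the Weyl bundle, defined using the symplectic connection. Because $s$ lies in the symplectic stabilizer, $\Lieder_{Y_\xi}\omega=d\lambda_\xi$. The commutator $[\mathbb D,\Lieder^{\mathcal W}_{Y_\xi}]$ is then $\frac1\planck\operatorname{ad}_\circ$ of a Weyl one-form $\theta_\xi$, and $\theta_\xi$ is built from $\Lieder_{Y_\xi}\Gamma$, $\Lieder_{Y_\xi}r$, and the variation of the curvature. The invariance hypothesis says that $\Lieder_{Y_\xi}$ of the Fedosov curvature is exact. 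Using this, one shows $\theta_\xi$ is $\mathbb D$-closed modulo central terms. Acyclicity of $\mathbb D$ in positive form degree then gives $b_\xi$ with $\mathbb D b_\xi=\theta_\xi$, up to a central closed form absorbed by exactness. Finally, operators commuting with $\mathbb D$ preserve flat sections, so $\widehat\nabla^{\planck}_\xi$ descends to a derivation of the Fedosov algebra.

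For~(iii), the curvature $[\widehat\nabla_\xi,\widehat\nabla_\zeta]-\widehat\nabla_{[\xi,\zeta]}$ has symbol $[Y_\xi,Y_\zeta]-Y_{[\xi,\zeta]}=0$, since $s$ is flat. It is therefore a derivation with vanishing classical symbol. Fedosov's theorem identifies such derivations modulo inner ones with $H^1_{\mathrm{dR}}(X/S)[[\planck]]$, and this gives the outer class in $\Omega^2_S\otimes H^1_{\mathrm{dR}}$. When the outer class vanishes, the curvature is $\frac1\planck\operatorname{ad}_\star C$. The pair $(\widehat\nabla,C)$ is then a crossed-flat transport in the sense of Definition~\ref{def:star-transport-cmp}, and Proposition~\ref{prop:star-realization-cmp} and Theorem~\ref{thm:filtered-lifting-cmp} reduce strict flattening to the order-by-order classes $\mathfrak a_{r+1}$.

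I expect the main obstacle to be~(ii): keeping track of central terms so that the equation for $b_\xi$ is actually solvable. What I would try first is to use horizontality of $c_{\planck}$ together with the invariance hypothesis to make the central part exact, and then invoke the Fedosov contracting homotopy $\delta^{-1}$ iteratively.
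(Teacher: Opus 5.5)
Your proposal follows the paper's proof essentially step for step. For (i) you use naturality of the Fedosov class under the equivalences obtained by integrating the transport. For (ii) you make $[\mathbb D,\Lieder^{\mathcal W}_{Y_\xi}]$ inner and absorb it by $\mathbb D$-exactness, supplying the acyclicity of $\mathbb D$ in positive form degree that the paper leaves implicit. For (iii) you use the classification of derivations modulo inner ones by formal $H^1_{\mathrm{dR}}$ followed by the filtered-lifting theorem. One small point: $\Lieder^{\mathcal W}_{Y_\xi}$ is a derivation of the fibrewise Weyl product only if $Y_\xi$ is strictly symplectic, so when $\lambda_\xi\neq 0$ first replace $Y_\xi$ by $Y_\xi+V_\xi$ with $V_\xi$ vertical and $\iota_{V_\xi}\omega=-\lambda_\xi$; the paper tacitly makes this normalization when it says that $s$ preserves $\omega$.
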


\begin{proof}
For (i), flat transport identifies the star-product algebras of nearby fibres by equivalences whose classical symbols are induced by the horizontal flow of~$s$.  Fedosov equivalence classes are classified by the formal class~\eqref{eq:fedosov-class-cmp}; consequently this class is constant for the Gauss--Manin connection determined by~$s$.

For (ii), differentiate the Fedosov connection along~$Y_\xi$.  Since~$s$ preserves~$\omega$ and the formal curvature class is horizontal, the commutator
\(
 [\mathbb D,\Lieder^{\mathcal W}_{Y_\xi}]
\)
is a $\mathbb D$-closed Weyl derivation whose central component is exact.  By the assumed choice of invariant Fedosov representatives, this commutator is inner with a generator that can be written as $-[\mathbb D,\planck^{-1}\operatorname{ad}_{\circ}(b_\xi)]$.  Therefore the corrected operator displayed in the theorem commutes with~$\mathbb D$.  It preserves $\mathbb D$-flat sections and hence descends to a derivation of the associated star-product algebra.

For (iii), the curvature of the descended derivations is again a derivation of the Fedosov star-product.  Its classical symbol is the curvature of the classical symplectic unfolding, hence zero.  For symplectic star-products, derivations modulo inner derivations are classified by formal first de Rham cohomology \cite{BertelsonCahenGutt,Fedosov}; this gives the stated outer class.  When the outer class vanishes, write the curvature as
\[
 R^{\planck}_{\xi,\zeta}
 =\frac1{\planck}\operatorname{ad}_\star(C_{\xi,\zeta}).
\]
Correcting the derivations order by order is then exactly the filtered lifting problem for the star-product controller, so the residual obstruction is~\eqref{eq:quantum-obstruction-class-cmp}.  If it also vanishes, one obtains strict flat quantum transport.
\end{proof}

\begin{remark} A warning is in order here. The theorem does not say that horizontality of the formal Fedosov class alone produces a globally strict flat connection by star-product derivations.  It gives the necessary cohomological condition, constructs local quantum derivations from horizontal Fedosov data, and identifies the remaining curvature obstructions.  This is the form needed later for the Poisson sigma model, where boundary quantization produces a concrete star-product and the transport anomaly is read as a Hochschild class.
\end{remark}

\begin{remark}[Kontsevich formality and non-symplectic Poisson targets]
\label{rem:equivariant-formality-cmp}
The Fedosov discussion above treats the non-shifted symplectic case.
For a general, possibly degenerate, ordinary Poisson structure one
expects the analogous star-product transport to be obtained from
Kontsevich formality~\cite{KontsevichDQ}.  More precisely, suppose that
one has chosen a relative formality quasi-isomorphism
\[
 \mathcal U:
 T_{\mathrm{poly}}(X/S)
 \longrightarrow
 C^\bullet(\mathcal O_X,\mathcal O_X)[1]
\]
which sends the Poisson Maurer--Cartan element \(\pi\) to the Hochschild
Maurer--Cartan element defining a star-product.  If, in addition,
\(\mathcal U\) is equivariant, up to coherent homotopy, for the action
of the Poisson transverse stabilizer, then the classical flat splitting
\(
s:\bbT_S\rightarrow\uupi
\)
is carried to a crossed-flat Hochschild, hence star-product, transport.
Strict flat transport requires the resulting Hochschild curvature to be
removed by the filtered lifting procedure of
Section~\ref{sec:quantum-principle}.

Thus ordinary fibrewise formality is not by itself enough for quantum
transport along \(S\).  What is needed is formality compatible with the
transverse stabilizer action. % In this paper we use this observation only
%as a bridge to the non-symplectic case; the concrete field-theoretic
%realization is supplied in Section~\ref{sec:psm} by the Poisson sigma
%model and the Cattaneo--Felder realization of Kontsevich's star-product.
\end{remark}

\subsection{Hochschild and cyclic invariants}\label{subsec:cyclic-transport-cmp}

Flat transport of algebras should transport their noncommutative invariants.  This is immediate, but important: it is the deformation-quantization analogue of Gauss--Manin transport for cohomology.  In applications, traces, cyclic Chern characters, and index-type functionals are often the observable numerical invariants.

Every degree-zero derivation $\delta$ of $A_{\planck}$ acts on Hochschild chains by
\[
 \Lieder_\delta(a_0\otimes\cdots\otimes a_m)
 =\sum_{i=0}^m
 a_0\otimes\cdots\otimes\delta(a_i)\otimes\cdots\otimes a_m.
\]
This operator commutes with the Hochschild differential~$b$ and Connes' operator~$B$.  For graded derivations one inserts the standard Koszul signs; the transports considered here are degree-zero along the base.

\begin{proposition}[Cyclic transport]\label{prop:cyclic-transport-cmp}
A strict flat star-product transport induces flat connections on Hochschild, negative cyclic, and periodic cyclic complexes and therefore on
\(
 \HH_\bullet(A_{\planck}),\,
 \HC^-_\bullet(A_{\planck}),\,
 \HP_\bullet(A_{\planck}).
\)

If a perfect $A_{\planck}$-module carries a compatible flat connection, its periodic cyclic Chern character is horizontal.  As a consequence, compatible trace or index maps produce locally constant numerical invariants, in the deformation-quantization setting of \cite{BresslerNestTsygan}.
\end{proposition}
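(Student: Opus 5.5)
The plan is to reduce everything to the stated fact that $\Lieder_\delta$ commutes with $b$ and $B$, and then to treat the Chern character through the Cartan homotopy calculus. Let $\nablaplanck$ be a strict flat star-product transport. For a local vector field $\xi$ on $S$, $\nablaplanck_\xi$ is a degree-zero star-product derivation with base symbol $\xi$. On Hochschild chains $C_\bullet(A_{\planck})$ I will set
\[
 \nabla^{\HH}_\xi:=\Lieder_{\nablaplanck_\xi},
\]
using the tensor-product Leibniz formula displayed before the proposition. The Leibniz rule in the base direction, $\nabla^{\HH}_{f\xi}=f\nabla^{\HH}_\xi$ and $\nabla^{\HH}_\xi(f c)=\xi(f)c+f\nabla^{\HH}_\xi c$ for $f\in\cO_S$, follows from the same properties of $\nablaplanck$: base functions are central for a relative star-product and act through the $0$-th tensor factor. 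So $\nabla^{\HH}$ is a connection on the complex of sheaves over $S$.

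Flatness comes from the fact that $\delta\mapsto\Lieder_\delta$ is a Lie algebra morphism, since the commutator of two such operators is the operator of the commutator derivation. This gives
\[
 [\nabla^{\HH}_\xi,\nabla^{\HH}_\zeta]-\nabla^{\HH}_{[\xi,\zeta]}=\Lieder_{R_{\nablaplanck}(\xi,\zeta)}=0.
\]
Because $\Lieder_\delta$ commutes with $b$ and $B$ and preserves the $\planck$-adic filtration, the connection extends $u$-linearly to the negative cyclic complex $(C_\bullet[[u]],b+uB)$ and to the periodic complex $C_\bullet((u))$. Completeness in $\planck$ and $u$ is preserved, so the connection remains flat and a chain map, and it therefore descends to $\HH_\bullet$, $\HC^-_\bullet$ and $\HP_\bullet$.

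For the Chern character, let $M$ be a perfect module with a compatible flat connection $\nabla^M$ over $\nablaplanck$. Locally $M$ is a direct summand of a free module, so I can represent it by an idempotent matrix $e\in M_N(A_{\planck})$, or by a perfect complex in general. Passing through Morita invariance for $M_N(A_{\planck})$, the connection becomes $\nablaplanck_\xi+\Gamma_\xi$, where $\Gamma_\xi$ is an inner matrix correction, and compatibility yields $\nabla_\xi e=[\Gamma_\xi,e]$. The Chern character $\operatorname{ch}(e)=\tr(e)+\sum_k c_k(u)\,\tr(e-\tfrac12)\otimes e^{\otimes 2k}$ is then differentiated: $\Lieder_{\nablaplanck_\xi}\operatorname{ch}(e)$ equals the variation of $\operatorname{ch}$ along the inner derivation $[\Gamma_\xi,-]$. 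By the Cartan homotopy formula (Goodwillie, Rinehart), an inner derivation $\operatorname{ad}_a$ satisfies $\Lieder_{\operatorname{ad}_a}=[b+uB,\iota_a]$ up to the standard correction terms, so it acts trivially on $\HC^-$ and $\HP$. Hence $\nabla^{\HP}\operatorname{ch}(M)=0$.

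Finally, suppose a trace or index map $\tau:\HP_0(A_{\planck})\to\cO_S((\planck))$ is compatible, meaning it intertwines $\nabla^{\HP}$ with the de Rham differential on $S$. Then $\tau(\operatorname{ch}M)$ is killed by every $\xi$ and is therefore locally constant, which is the setting of \cite{BresslerNestTsygan}. The main obstacle is the Chern character step. The first things to try are making ``compatible flat connection'' precise for perfect complexes rather than projectives, and checking the Cartan homotopy with the correct $u$-powers so that it holds in $\HC^-$ and not only in $\HP$. If the perfect-complex case becomes delicate, I will reduce to idempotents via a resolution, using locality on $S$.
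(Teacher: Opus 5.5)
Your proposal is correct and follows essentially the same route as the paper: both use that $\Lieder_\delta$ commutes with $b$ and $B$ to obtain flat connections on the Hochschild and cyclic mixed complexes, and both show the derivative of the Chern character is a $(b+uB)$-boundary by identifying it with the action of an inner derivation. The paper compresses this last step to ``a trace of a commutator, hence a cyclic boundary''; your idempotent and Cartan-homotopy bookkeeping spells it out, and your worry about $u$-powers does not affect the statement, since it only claims horizontality in $\HP$, where $u$ is invertible.
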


\begin{proof}
The identity $[\Lieder_\delta,b]=0$ is obtained by applying the derivation rule for~$\delta$ to every multiplication appearing in the Hochschild boundary.  The identity $[\Lieder_\delta,B]=0$ follows because $B$ is built from the unit and the cyclic permutation, both preserved by a unital derivation.  Therefore a star-product connection by derivations acts on the Hochschild and cyclic mixed complexes.  If the derivation connection is flat, the induced connections on these complexes are flat as well.

For a perfect module with compatible connection, the Chern character is represented by the usual cyclic trace expression.  Differentiating it gives a trace of a commutator, hence a cyclic boundary.  Thus the cohomology class of the Chern character is horizontal.  A trace or index functional compatible with the connection sends horizontal classes to horizontal scalar functions, hence to locally constant numerical invariants on connected components.
\end{proof}

\begin{remark}[Crossed-flat variant]\label{rem:cyclic-crossed-cmp}
For crossed-flat transport with inner curvature, the induced cyclic connection is flat on homology after inserting the standard Cartan homotopies for inner derivations.  The strict statement above is the form used for the boundary star-products in Section~\ref{sec:psm}; the crossed version is the natural stacky refinement.
\end{remark}

\section{BV unfoldings and factorization observables}\label{sec:bv-cmp}

The Batalin--Vilkovisky formalism is the field-theoretic incarnation of the $(-1)$-shifted symplectic case.  A $(-1)$-shifted symplectic space is the odd symplectic phase space, but a BV theory also includes a cohomological Hamiltonian, or BV action, satisfying the classical master equation \cite{BatalinVilkovisky81,SchwarzBV}.  The point of this section is that the Poisson-controller formalism produces flat transport not merely of the odd symplectic structure, but of the BV differential and hence of classical and quantum observables.  The obstruction classes of Section~\ref{sec:quantum-principle} become transport anomalies for quantum BV theories.

\subsection{The BV stabilizer}\label{subsec:bv-stabilizer-cmp}

Let
\(
 (X/S,\omega,\IBV)
\)
be a relative BV datum: $\omega$ is a relative $(-1)$-shifted symplectic form and the BV action~$\IBV$ satisfies
\begin{equation}\label{eq:cme-cmp}
 d\IBV+\frac12\{\IBV,\IBV\}_\omega=0.
\end{equation}
The corresponding classical BV differential is
\[
 \QBV:=d+\{\IBV,-\}_\omega,
 \qquad
 \QBV^2=0.
\]
Here the bracket is the odd Poisson bracket induced by~$\omega$.  The symplectic transverse controller~$\uuomega$ acts on functions, and therefore on the Maurer--Cartan element~$\IBV$.

\begin{definition}[BV stabilizer]\label{def:bv-controller-cmp}
The BV transverse controller is the simultaneous homotopy stabilizer of the pair $(\omega,\IBV)$.  Equivalently, it is the homotopy fibre
\begin{equation}\label{eq:bv-controller-cmp}
 \UBV
 :=\hofib\!\left(
 \uuomega
 \longrightarrow
 (\Obscl_{X/S},\QBV)[1]
 \right),
\end{equation}
where the map sends a symplectic transverse derivation to the class of its infinitesimal displacement of~$\IBV$ in the BV complex.  In a strict cone model, a degree-zero element is represented by a triple
\(
 (D,\lambda,H)
\)
with
\begin{equation}\label{eq:bv-stabilizer-equations-cmp}
 d_{\mathrm{cl}}\lambda=\Lieder_D\omega,
 \qquad
 \QBV H=D(\IBV).
\end{equation}
For homogeneous higher-degree elements the same formula is understood with the Koszul signs of the shifted cone.
\end{definition}

The first equation in~\eqref{eq:bv-stabilizer-equations-cmp} is the shifted symplectic stabilizer condition.  The second says that the transverse displacement of the BV action is BV-exact.  Thus $H$ is not an auxiliary decoration; it is the homotopy which will correct the transverse derivative so that it commutes with the BV differential.

\subsection{Classical BV transport}\label{subsec:bv-classical-transport-cmp}

A flat splitting of the BV controller transports classical observables.  The formula is the BV analogue of the corrected Poisson-cohomology transport operator: the ordinary transverse derivative must be corrected by the Hamiltonian homotopy~$H$.

\begin{theorem}[BV transport]\label{thm:bv-transport-cmp}
Let
\(
 \sBV:\bbT_S\longrightarrow\UBV
\)
be a flat BV unfolding, locally represented by
\(
 \sBV(\xi)=(D_\xi,\lambda_\xi,H_\xi).
\)
Then
\begin{equation}\label{eq:bv-corrected-connection-cmp}
 \nabla^{\mathrm{BV}}_\xi
 :=D_\xi+\{H_\xi,-\}_\omega
\end{equation}
defines a flat connection on the classical BV complex
\(
 (\Obscl_{X/S},\QBV).
\)
In particular,
\[
 [\QBV,\nabla^{\mathrm{BV}}_\xi]=0,
\]
and the BV cohomology sheaves carry induced flat transport.  

The formal deformation theory of the flat BV unfolding is governed by
\(
 \mathbf R\Gamma\!\left(
 S,
 C^\bullet_{\CE}(\bbT_S;\KBV)
 \right), \)
 where \[
 \KBV:=\fib(\UBV\to\bbT_S).
\]
\end{theorem}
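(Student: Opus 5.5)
The plan is to deduce the theorem from the stabilizer--transport principle, Theorem~\ref{thm:stabilizer-transport-cmp}, applied to the complete filtered dg-Lie algebra $\mathfrak p=(\Obscl_{X/S},\{-,-\}_\omega)$ of relative classical observables with its odd Poisson bracket. The relevant Maurer--Cartan element is $m=\IBV$, whose master equation~\eqref{eq:cme-cmp} is exactly $d\IBV+\tfrac12[\IBV,\IBV]=0$. The twisted dg-Lie algebra $\mathfrak p^m$ is then $(\Obscl_{X/S},\QBV)$. The acting object is the symplectic controller $\uuomega$. Its elements $(D,\lambda)$ act on functions by derivations that preserve the odd bracket up to the homotopy $\lambda$. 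With this choice, Definition~\ref{def:bv-controller-cmp} literally presents $\UBV$ as $\Stab^{\mathrm h}_{\uuomega}(\IBV)$, in the sense of Lemma~\ref{lem:homotopy-stabilizer-cmp}.

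First I will verify the strict chain-level identity directly, mirroring the proof of Lemma~\ref{lem:stabilizer-action-cmp}. Take $\sBV(\xi)=(D_\xi,\lambda_\xi,H_\xi)$ of degree zero. Since $D_\xi$ is a derivation of $\mathfrak p$ commuting with $d$, we have $[\QBV,D_\xi]=-\{D_\xi(\IBV),-\}_\omega$. The Cartan identity $[\QBV,\{H_\xi,-\}_\omega]=\{\QBV H_\xi,-\}_\omega$ holds, and the second equation of~\eqref{eq:bv-stabilizer-equations-cmp} gives $\QBV H_\xi=D_\xi(\IBV)$. The two terms therefore cancel, yielding $[\QBV,\nabla^{\mathrm{BV}}_\xi]=0$.

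The one genuine subtlety is that $D_\xi$ need only preserve $\omega$ up to $d_{\mathrm{cl}}\lambda_\xi$, so it is a bracket derivation only up to homotopy. The cancellation above uses the strict derivation property. I would handle this first by choosing a strict symplectic chart in which the stabilizer is represented by pairs with $\Lieder_{D_\xi}\omega=0$, which is possible after absorbing $\lambda_\xi$ into a vertical correction, as in Remark~\ref{rem:why-extended-cmp}. Otherwise, I would record that the $\lambda$-term contributes exactly the higher component of the $L_\infty$-action furnished by Theorem~\ref{thm:poisson-symplectic-cmp}.

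For flatness, I would invoke Theorem~\ref{thm:stabilizer-transport-cmp}(i): since $\sBV$ is a morphism of derived Lie algebroids and the stabilizer acts on $\mathfrak p^m$, the composite is a representation of $\bbT_S$. In a strict model this amounts to checking that the curvature of $\nabla^{\mathrm{BV}}$ equals the action of $F_{\sBV}(\xi,\zeta)$, by the Jacobi computation of Lemma~\ref{lem:curvature-identities-cmp}, and hence vanishes. In the crossed case the curvature is $\{C,-\}_\omega$ for a $\QBV$-closed $C$, which is null on cohomology. Part~(iii) of the same theorem then gives flat transport on the BV cohomology sheaves.

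The deformation statement follows verbatim from Theorem~\ref{thm:curvature-formal-moduli-cmp}(ii), applied to the algebroid $\UBV\to\bbT_S$ in place of $\uupi$. Nothing in that proof used the Poisson nature of the controller: it used only the anchor fibre sequence, the adjoint representation on $\KBV$ made flat by $\sBV$, and the curvature-change formula~\eqref{eq:curvature-change-cmp}. I expect the main obstacle to be the homotopy-coherent treatment of $\lambda_\xi$ in the bracket compatibility above; the remaining steps are formal.
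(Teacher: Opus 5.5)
Your proposal is essentially the paper's proof. In a representative where $D_\xi$ is a strict derivation of the odd bracket, $[\QBV,D_\xi]=-\{D_\xi(\IBV),-\}_\omega$ cancels against $[\QBV,\{H_\xi,-\}_\omega]=\{\QBV H_\xi,-\}_\omega$ via $\QBV H_\xi=D_\xi(\IBV)$; flatness is the action on observables of the vanishing curvature of $\sBV$; and the deformation statement is Theorem~\ref{thm:curvature-formal-moduli-cmp} applied to $\KBV$. Routing the argument through Theorem~\ref{thm:stabilizer-transport-cmp} is the same cancellation the paper flags in Remark~\ref{rem:brst-pattern-cmp}.

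The paper simply computes in such a strict representative, and you should too, because your two side arguments are shaky as written. Absorbing $\lambda_\xi$ into a vertical $V_\xi$ changes the operator. It also forces you to find a new primitive for $(D_\xi-V_\xi)(\IBV)$, which the stabilizer data do not supply, and to recheck flatness via \eqref{eq:curvature-change-cmp}. Separately, bracketing with a merely $\QBV$-closed $C$ need not act trivially on cohomology, since it acts there by the bracket with $[C]$.
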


\begin{proof}
The operator $D_\xi$ is a symplectic transverse derivation.  In a strict Darboux or strict stabilizer representative it acts as a derivation of the odd Poisson bracket; in the invariant formulation this statement is replaced by the corresponding homotopy encoded by the symplectic stabilizer.  We compute in such a strict representative.  Since $D_\xi$ commutes with the internal differential and differentiates the bracket,
\[
 [\QBV,D_\xi]
 =-\{D_\xi(\IBV),-\}_\omega.
\]
On the other hand, the Jacobi identity and the master equation give
\[
 [\QBV,\{H_\xi,-\}_\omega]
 =\{\QBV H_\xi,-\}_\omega.
\]
The stabilizer equation $\QBV H_\xi=D_\xi(\IBV)$ cancels the two terms, proving that $\nabla^{\mathrm{BV}}_\xi$ is a cochain operator.

Flatness follows from the fact that $\sBV$ is a flat splitting of the simultaneous stabilizer.  Explicitly, the curvature of $D+\{H,-\}$ is the action on observables of the curvature of the splitting in~$\UBV$, and this curvature vanishes, with the higher homotopies in an $L_\infty$ model supplying the same cancellation invariantly.  The final deformation statement is the general curvature-and-formal-moduli theorem of Section~\ref{sec:transport-principle} applied to the vertical kernel of~$\UBV$.
\end{proof}

\begin{corollary}[Classical BV local systems]\label{cor:bv-local-systems-cmp}
If $S$ is complex analytic and the BV cohomology sheaves
\(
 \mathcal H^i(\Obscl_{X/S},\QBV)
\)
are locally free of finite rank, then the horizontal sections of the connection of Theorem~\ref{thm:bv-transport-cmp} form local systems on~$S$.
\end{corollary}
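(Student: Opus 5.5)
The plan is to reduce the corollary to Theorem~\ref{thm:bv-transport-cmp} and then apply the standard correspondence between flat holomorphic connections on locally free sheaves and local systems, in the same way as the final step of Corollary~\ref{cor:poisson-cohomology-transport-cmp}.

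First, Theorem~\ref{thm:bv-transport-cmp} gives the operators $\nabla^{\mathrm{BV}}_\xi=D_\xi+\{H_\xi,-\}_\omega$, and they satisfy $[\QBV,\nabla^{\mathrm{BV}}_\xi]=0$. Because of this, each $\nabla^{\mathrm{BV}}_\xi$ descends to an operator on every cohomology sheaf $\mathcal H^i(\Obscl_{X/S},\QBV)$. I will check two things about the descended operator.

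\begin{itemize}
\item \emph{Leibniz rule over $\cO_S$.} The derivation $D_\xi$ has base symbol $\xi$, while $\{H_\xi,-\}_\omega$ is $\cO_S$-linear because the odd bracket is relative. Hence $\nabla_\xi(f\,c)=\xi(f)c+f\nabla_\xi c$ for $f\in\cO_S$.
\item \emph{Linearity in $\xi$.} On cohomology, $\nabla_\xi$ is $\cO_S$-linear in $\xi$. In a strict model this holds because the splitting $\sBV$ is $\cO_S$-linear. In an $L_\infty$ model it only holds up to homotopy, and that homotopy vanishes on cohomology.
\end{itemize}

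Together these make $\nabla$ a holomorphic connection on each $\mathcal H^i$.

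Second, I will show the connection is flat. The curvature on cohomology is induced by the chain-level curvature, which vanishes by Theorem~\ref{thm:bv-transport-cmp}. In an $L_\infty$ presentation the chain-level curvature is instead $\QBV$-null-homotopic, with the homotopy supplied by the higher components of $\sBV$, so it again vanishes on cohomology.

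Third, with a flat holomorphic connection on a locally free sheaf of finite rank over a complex manifold, the holomorphic Frobenius theorem applies. It produces local frames of horizontal sections, and horizontal sections satisfy a linear first-order system with holomorphic coefficients, so they extend uniquely along paths. Hence $\ker\nabla$ is a locally constant sheaf of $\mathbb C$-vector spaces of the same rank, and $\mathcal H^i\simeq\ker\nabla\otimes_{\mathbb C}\cO_S$. This is the desired local system.

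The main obstacle is the second step in the genuinely $L_\infty$ setting. There one must make sure that the higher homotopies only contribute $\QBV$-exact terms, so that the descended curvature is strictly zero rather than zero up to homotopy. I would handle this by passing to a strict local representative, which is available by Definition~\ref{def:bv-controller-cmp} together with the stated strict-chart hypotheses, and computing there.
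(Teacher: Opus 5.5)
Your proposal is correct and follows the same route as the paper: descend the flat connection of Theorem~\ref{thm:bv-transport-cmp} to the cohomology sheaves, then invoke the standard fact that a flat holomorphic connection on a locally free sheaf of finite rank has a local system of horizontal sections, with $\mathcal{H}^i\simeq\ker\nabla\otimes_{\mathbb C}\cO_S$. You simply spell out the Leibniz and flatness checks that the paper leaves implicit, and the $L_\infty$ ``obstacle'' you flag is harmless, since a $\QBV$-null-homotopic curvature already induces zero on cohomology.
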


\begin{proof}
A flat holomorphic connection on a locally free sheaf is integrable.  Its horizontal sections form a local system, and the original sheaf is recovered by tensoring that local system with~$\mathcal O_S$.
\end{proof}

\subsection{Quantum BV transport and anomalies}\label{subsec:bv-quantum-cmp}

We now pass from classical BV observables to quantum BV observables. The quantum BV theory is an instance of the \(\planck\)-adic quantum
input of Section~\ref{sec:quantum-principle}.  Thus
\((\Obs^{\mathrm q}_{\planck, X/S},Q_{\planck})\) denotes a complete filtered
complex of quantum observables whose reduction modulo \(\planck\) is the
classical BV observable complex
\((\Obs^{\mathrm{cl}}_{X/S},Q_{\mathrm{BV}})\).  Its quantum symmetries are
filtered derivations commuting with \(Q_{\planck}\), modulo the relevant
inner quantum BV homotopies. The point here is to ask whether this differential and the factorization products can be transported flatly in the parameter direction.
%The following hypothesis is the BV realization of a quantum symmetry extension.  It is standard in perturbative BV theory that a quantum theory is encoded by a differential on quantum observables; the point here is to ask whether this differential and the factorization products can be transported flatly in the parameter direction.

Assume that the family has an $\planck$-adically complete quantum BV complex
\(
 (\Obs^{\mathrm q}_{\planck,X/S},Q_{\planck})
\)
whose classical limit is $(\Obscl_{X/S},\QBV)$, together with a filtered quantum BV symmetry extension
\(
 \UplanckBV\rightarrow \UBV.
\)
Let
\[
 \KplanckBV:=\fib(\UplanckBV\to\UBV).
\]

\begin{definition}[Quantum BV unfolding]\label{def:quantum-bv-unfolding-cmp}
A quantum BV unfolding of a classical BV unfolding~$\sBV$ is a flat lift
\[
 \splanckBV:\bbT_S\longrightarrow\UplanckBV
\]
whose classical limit is~$\sBV$.  Its action on quantum observables is denoted
\(
 \nabla^{\planck,\mathrm{BV}}_\xi.
\)
\end{definition}

\begin{proposition}[Quantum BV transport]\label{prop:quantum-bv-transport-cmp}
A quantum BV unfolding induces flat transport on quantum BV observables:
\[
 [Q_{\planck},\nabla^{\planck,\mathrm{BV}}_\xi]=0,
\qquad
 [\nabla^{\planck,\mathrm{BV}}_\xi,
 \nabla^{\planck,\mathrm{BV}}_\zeta]
 =\nabla^{\planck,\mathrm{BV}}_{[\xi,\zeta]}.
\]
Its associated graded connection is the classical BV transport of Theorem~\ref{thm:bv-transport-cmp}.
\end{proposition}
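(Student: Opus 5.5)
The argument will mirror the proof of Theorem~\ref{thm:stabilizer-transport-cmp}, now applied to the quantum symmetry extension $\UplanckBV$ acting on $(\Obs^{\mathrm q}_{\planck,X/S},Q_{\planck})$. The plan has three steps: construct the action, verify the chain-map property and flatness, and then reduce modulo $\planck$.

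\emph{Step 1: the action.} By Definition~\ref{def:quantum-extension-cmp}, $\UplanckBV$ acts on the quantum observable complex by filtered infinitesimal quantum symmetries. Concretely, these are filtered derivations commuting with $Q_{\planck}$, taken modulo inner quantum BV homotopies. In a strict representative, an element of the upper term is a pair $(D^{\planck},H^{\planck})$. Its operator is the quantum analogue of \eqref{eq:bv-corrected-connection-cmp}: the derivation $D^{\planck}$ corrected by the inner term generated by $H^{\planck}$, for instance through the quantum BV bracket.

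\emph{Step 2: cochain property and flatness.} For a closed degree-zero element, the quantum stabilizer equation $Q_{\planck}H^{\planck}=D^{\planck}(I_{\planck})$ plays the role of \eqref{eq:bv-stabilizer-equations-cmp}. The same cancellation as in Lemma~\ref{lem:stabilizer-action-cmp} and Theorem~\ref{thm:bv-transport-cmp} should then give $[Q_{\planck},\nabla^{\planck,\mathrm{BV}}_\xi]=0$. Flatness is formal. A flat splitting $\splanckBV$ is a morphism of derived Lie algebroids, and composing it with the action representation produces a representation of $\bbT_S$. In a strict model, the curvature of $\nabla^{\planck,\mathrm{BV}}$ is the image under the action of the bracket defect $[\splanckBV(\xi),\splanckBV(\zeta)]-\splanckBV([\xi,\zeta])$, and this defect vanishes by flatness. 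In the crossed or $L_\infty$ case, the inner curvature component acts by a $Q_{\planck}$-homotopy. The higher components of the $L_\infty$-morphism then supply the coherent null-homotopy, exactly as in Theorem~\ref{thm:stabilizer-transport-cmp}.

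\emph{Step 3: classical limit.} The action is filtered, so $\nabla^{\planck,\mathrm{BV}}$ preserves $F^r=\planck^r\Obs^{\mathrm q}_{\planck}$. Its associated graded in degree zero is therefore the action of $\clplanck\circ\splanckBV=\sBV$ on $\Obscl_{X/S}$. Since $\clplanck$ is compatible with the classical action, this is the operator $D_\xi+\{H_\xi,-\}_\omega$ of Theorem~\ref{thm:bv-transport-cmp}.

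\emph{Main obstacle.} I expect the hardest point to be the strict statement of flatness when $\UplanckBV$ is only a crossed object. In that case the curvature is inner, and it vanishes on observables only up to $Q_{\planck}$-homotopy. To deal with this, I would first read the displayed identities at the level of the strict representative in which inner quantum homotopies act trivially, or equivalently on cohomology. I would then invoke the $L_\infty$ coherences for the chain-level statement.
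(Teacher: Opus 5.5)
Your proposal is correct and follows the paper's argument. The quantum symmetry object acts by filtered operators that commute with $Q_{\planck}$ by definition, flatness of $\splanckBV$ kills the curvature of the induced representation, and $\gr^0_{\planck}$ recovers the action of $\clplanck\circ\splanckBV=\sBV$ from Theorem~\ref{thm:bv-transport-cmp}.

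Two side remarks. First, your Step~2 rederivation of $[Q_{\planck},\nabla^{\planck,\mathrm{BV}}_\xi]=0$ through a quantum stabilizer equation is superfluous, and it is not automatic either: it would require $D^{\planck}$ to commute with the $\planck\Delta$ part of $Q_{\planck}$, which the dg-Lie cancellation of Lemma~\ref{lem:stabilizer-action-cmp} does not provide. Second, your caveat that in a crossed model the curvature is only inner, and so vanishes only up to $Q_{\planck}$-homotopy or on cohomology, is a refinement the paper's short proof leaves implicit.
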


\begin{proof}
The quantum BV symmetry object acts by filtered endomorphisms of
\(
 (\Obs^{\mathrm q}_{\planck,X/S},Q_{\planck}).
\)
Therefore every element in the image of~$\splanckBV$ commutes with~$Q_{\planck}$.  Since~$\splanckBV$ is a morphism of derived Lie algebroids, its curvature vanishes.  Applying the representation on quantum observables gives the bracket identity.  Passing to the associated graded recovers the action of~$\sBV$ on the classical BV complex.
\end{proof}

\begin{theorem}[BV transport anomaly]\label{thm:bv-anomaly-cmp}
Suppose a lift of~$\sBV$ has been constructed modulo $\planck^{r+1}$.  The obstruction to extending it modulo $\planck^{r+2}$ is
\begin{equation}\label{eq:bv-anomaly-cmp}
 \mathfrak a^{\mathrm{BV}}_{r+1}
 \in
 H^2\!\left(
 \mathbf R\Gamma\!\left(
 S,
 C^\bullet_{\CE}\bigl(\bbT_S;\gr^{r+1}_{\planck}\KplanckBV\bigr)
 \right)
 \right).
\end{equation}
If this class vanishes, extensions form a torsor under the corresponding $H^1$, and infinitesimal automorphisms are governed by $H^0$.
\end{theorem}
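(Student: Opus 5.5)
The plan is to deduce the statement directly from the filtered-lifting principle, Theorem~\ref{thm:filtered-lifting-cmp}, applied to the quantum BV symmetry extension $\UplanckBV\to\UBV$ in place of $\Uplanck\to\uupi$. The first step is to check its hypotheses in the BV realization. $\UplanckBV$ is assumed to be an $\planck$-adically complete filtered extension with $F^r\UplanckBV=\planck^r\UplanckBV$. Because its elements are filtered derivations of $(\Obs^{\mathrm q}_{\planck,X/S},Q_{\planck})$ modulo inner quantum BV homotopies, and commutators of derivations are additive in $\planck$-order, we get $[F^i,F^j]\subseteq F^{i+j}$. Pronilpotence follows from completeness together with $\gr^0=\UBV$. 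The quantum vertical kernel of Definition~\ref{def:quantum-extension-cmp} is then exactly $\KplanckBV=\fib(\UplanckBV\to\UBV)$, so the obstruction complex \eqref{eq:quantum-obstruction-complex-cmp} becomes the complex appearing in \eqref{eq:bv-anomaly-cmp}.

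Next, fix a flat lift $s_r$ of $\sBV$ modulo $\planck^{r+1}$ and choose an arbitrary graded lift $\widetilde s_{r+1}$ to $\UplanckBV/F^{r+2}$. Following the proof of Theorem~\ref{thm:filtered-lifting-cmp}, the curvature of $\widetilde s_{r+1}$ vanishes modulo $F^{r+1}$ and projects trivially to $\bbT_S$. It therefore defines a two-cochain in $\gr^{r+1}_{\planck}\KplanckBV$. The Bianchi identity (Lemma~\ref{lem:curvature-identities-cmp}) makes this cochain closed for the differential $d_{s_r}$. Changing the lift by $\alpha$ changes the curvature by $d_{s_r}\alpha$, because the quadratic term lies in $F^{2r+2}\subseteq F^{r+2}$. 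This yields $\mathfrak a^{\mathrm{BV}}_{r+1}$, and the class vanishes exactly when an extension exists. Differences of flat extensions are closed one-cochains, and vertical gauge changes alter them by exact ones, which gives the $H^1$-torsor. Infinitesimal automorphisms are the degree-zero closed elements, which gives $H^0$.

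The main obstacle I expect is the first step: justifying that the action of $\gr^{r+1}_{\planck}\KplanckBV$ through $s_r$ really is a representation of $\bbT_S$. This must hold even though $\UplanckBV$ is only a crossed or $L_\infty$ object modulo inner BV homotopies, so the adjoint action on the graded piece could a priori pick up curvature from $s_r$. My first attempt would be to note that $s_r$ is flat modulo $F^{r+1}$, so its curvature lies in $F^{r+1}$. Its adjoint action on $F^{r+1}\KplanckBV$ then lands in $F^{2r+2}$ (or at worst $F^{r+2}$ when $r=0$, using $\gr^0$-centrality of the kernel's action on $\gr^1$ via the Peiffer identities), so it vanishes on $\gr^{r+1}$. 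In an $L_\infty$ model I would instead invoke the homotopy-transferred brackets and rely on the invariance statement already built into Theorem~\ref{thm:filtered-lifting-cmp}.
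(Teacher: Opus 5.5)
Your proposal is correct and follows the paper's route: the paper's proof simply applies Theorem~\ref{thm:filtered-lifting-cmp} to $\UplanckBV\to\UBV$, with coefficients $\gr^{r+1}_{\planck}\KplanckBV$ and with the partial lift supplying the $\bbT_S$-action, so your curvature/Bianchi/torsor argument is that theorem's proof written out. Your anticipated obstacle is easier than you make it. Since $[F^1,F^{r+1}]\subseteq F^{r+2}$, the adjoint action on $\gr^{r+1}$ factors through $\UBV$, and it is therefore the representation induced by the flat $\sBV$; no Peiffer argument is needed at $r=0$. Likewise, the pronilpotence you need is that of $F^1$, which comes from completeness together with the bracket estimate, not from $\gr^0=\UBV$.
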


\begin{proof}
This is Theorem~\ref{thm:filtered-lifting-cmp} applied to the filtered quantum BV symmetry extension $\UplanckBV\to\UBV$.  The associated graded vertical kernel is $\gr^{r+1}_{\planck}\KplanckBV$, and the partial lift supplies the coefficient action of~$\bbT_S$ on that kernel.
\end{proof}

%\begin{remark}[Meaning of the anomaly]\label{rem:bv-anomaly-meaning-cmp}
%Once again, the class~\eqref{eq:bv-anomaly-cmp} is a \emph{transport anomaly}.  It measures the obstruction to making a chosen family of quantum BV theories flat over~$S$, and it is logically distinct from an anomaly obstructing the construction of the quantum BV differential on each individual fibre.
%\end{remark}

\subsection{Factorization algebras}\label{subsec:factorization-cmp}

For a local perturbative field theory, observables are not only complexes; they assemble into a factorization algebra.  Costello--Gwilliam's formalism packages the locality and operator-product structure of classical and quantum observables in precisely this way \cite{CostelloRenormalization,CostelloGwilliam}.  The controller formalism is compatible with that structure provided the unfolding acts locally on fields.

\smallskip

Suppose that the BV family is realized by a perturbative local field theory on a spacetime~$M$.  Its classical observables form a factorization algebra
\[
 U\longmapsto \Obscl(U),
\]
and a chosen quantization gives a quantum factorization algebra
\[
 U\longmapsto \Obsq(U).
\]
We assume the BV unfolding is \emph{local}: the operators $D_\xi$ and $H_\xi$ are local operators on fields and therefore induce cochain operators on observables compatible with restriction maps. 

\begin{theorem}[Factorization transport]
\label{thm:factorization-transport-cmp}
Let the classical BV family over \(S\) be realised by a local perturbative
field theory on a spacetime \(M\), and let
\(
 U\mapsto \Obscl(U)
\)
be its classical factorization algebra of observables.  Suppose that a
flat BV unfolding
\(
 \sBV:\bbT_S\longrightarrow\UBV
\)
is represented locally by stabilizer data
\[
 \sBV(\xi)=(D_\xi,\lambda_\xi,H_\xi),
 \qquad
 \QBV H_\xi=D_\xi(\IBV),
\]
and assume that \(D_\xi\) and \(H_\xi\) act by local operators on fields
and observables, compatibly with restrictions to open subsets.

Then \(\Obscl\) carries a flat \(S\)-connection by factorization-algebra
endomorphisms.  More explicitly, for every open set \(U\subset M\) there
is a cochain endomorphism
\[
 \nabla^{\mathrm{BV}}_{\xi,U}:
 \Obscl(U)\longrightarrow \Obscl(U)
\]
such that
\[
 [\QBV,\nabla^{\mathrm{BV}}_{\xi,U}]=0.
\]
Moreover, for every factorization product
\[
 \mu_{U_1,\ldots,U_r;V}:
 \Obscl(U_1)\otimes\cdots\otimes\Obscl(U_r)
 \longrightarrow
 \Obscl(V),
\]
with \(U_1,\ldots,U_r\) pairwise disjoint in \(V\), the product map is
horizontal:
\[
 \nabla^{\mathrm{BV}}_{\xi,V}\circ\mu_{U_1,\ldots,U_r;V}
 =
 \sum_{i=1}^r
 \mu_{U_1,\ldots,U_r;V}
 \circ
 \bigl(
  \mathrm{id}\otimes\cdots\otimes
  \nabla^{\mathrm{BV}}_{\xi,U_i}
  \otimes\cdots\otimes\mathrm{id}
 \bigr),
\]
with the usual Koszul signs when homogeneous observables are written
out.  The curvature of this connection vanishes in the derived
factorization-algebra sense.
\end{theorem}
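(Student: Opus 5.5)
The plan is to build the connection open by open from the BV transport of Theorem~\ref{thm:bv-transport-cmp}, using locality to make it compatible with the factorization structure. For each open $U\subset M$ I will define
\[
 \nabla^{\mathrm{BV}}_{\xi,U}:=D_\xi|_U+\{H_\xi,-\}_\omega\big|_U
\]
on $\Obscl(U)$. Locality of $D_\xi$ and $H_\xi$ makes this well defined: a local operator on fields induces an operator on observables supported in $U$, and restriction (extension-by-zero) maps intertwine these operators. The cochain identity $[\QBV,\nabla^{\mathrm{BV}}_{\xi,U}]=0$ is the same cancellation as in the proof of Theorem~\ref{thm:bv-transport-cmp}. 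The term $[\QBV,D_\xi]=-\{D_\xi(\IBV),-\}$ is cancelled by $[\QBV,\{H_\xi,-\}]=\{\QBV H_\xi,-\}$, via the stabilizer equation $\QBV H_\xi=D_\xi(\IBV)$. Here $\QBV$ on $\Obscl(U)$ is the restriction of the local BV differential, and Jacobi holds because the local odd bracket on $\Obscl(U)$ is a restriction of a global one.

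The next step is horizontality of the factorization products. Classically, $\mu_{U_1,\dots,U_r;V}$ is the commutative product of observables with disjoint supports, pushed forward into $V$. Since $D_\xi$ is a derivation of the commutative product, it satisfies the Leibniz rule across $\mu$. The same holds for $\{H_\xi,-\}_\omega$, because the odd bracket is a biderivation. Combining these with compatibility with the extension maps $\Obscl(U_i)\to\Obscl(V)$ gives the displayed Leibniz identity, with Koszul signs since $\nabla_\xi$ has degree zero. I will phrase this as $\nabla_\xi$ being a derivation of the prefactorization algebra.

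Flatness is where I expect the main obstacle. Strictly, the curvature $[\nabla_\xi,\nabla_\zeta]-\nabla_{[\xi,\zeta]}$ is the action of the curvature of $\sBV$ in $\UBV$. This vanishes only up to the higher components in an $L_\infty$ or crossed model, for example an inner term $\{C_{\xi,\zeta},-\}$ together with a $\QBV$-homotopy. I will handle it as follows:
\begin{itemize}
\item[(a)] represent $\sBV$ by its components and observe that every component is a local operator, so the action of $\UBV$ on $\Obscl$ is an $L_\infty$-action by local derivations compatible with restriction and products;
\item[(b)] conclude that composing with the flat splitting gives an $L_\infty$-representation of $\bbT_S$ on each $\Obscl(U)$, natural in $U$ and multiplicative;
\item[(c)] note that the homotopies correcting the curvature are themselves inner bracket operators, hence derivations compatible with $\mu$.
\end{itemize}
This is exactly flatness in the derived factorization-algebra sense. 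The delicate point is checking that locality of $H_\xi$ persists for the higher homotopies; I would assume, or derive from the locality hypothesis on the whole stabilizer representative, that the local model of $\UBV$ consists of local functionals.
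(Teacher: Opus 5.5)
Your proposal is correct and follows the paper's proof essentially step for step. You use the same operator \(D_{\xi,U}+\{H_{\xi,U},-\}\), the same stabilizer cancellation via \(\QBV H_\xi=D_\xi(\IBV)\), the same derivation/biderivation argument for horizontality of the factorization products, and the same identification of the curvature with the action of the (vanishing) curvature of \(\sBV\). Your explicit handling of the higher coherences, including the remark that the higher homotopy components must also be local, is more careful than the paper, which simply says these coherences are already encoded in the BV stabilizer.
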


\begin{proof}
The locality assumption means that the infinitesimal BV stabilizer data
\((D_\xi,H_\xi)\) restrict to every open set \(U\subset M\).  On
classical observables over \(U\) define
\[
 \nabla^{\mathrm{BV}}_{\xi,U}
 :=
 D_{\xi,U}+\{H_{\xi,U},-\}_{\mathrm{BV}}.
\]
This is the local version of the BV transport operator constructed in
Theorem~\ref{thm:bv-transport-cmp}.

We first check compatibility with the differential.  Since \(D_\xi\)
acts as a derivation of the odd Poisson bracket, the computation in the
proof of Theorem~\ref{thm:bv-transport-cmp} gives
\[
 [\QBV,D_{\xi,U}]
 =
 -\{D_\xi(\IBV),-\}_{\mathrm{BV}},
\]
whereas
\[
 [\QBV,\{H_{\xi,U},-\}_{\mathrm{BV}}]
 =
 \{\QBV H_{\xi,U},-\}_{\mathrm{BV}}.
\]
The stabilizer equation
\(
 \QBV H_\xi=D_\xi(\IBV)
\)
makes these two terms cancel.  Hence
\[
 [\QBV,\nabla^{\mathrm{BV}}_{\xi,U}]=0
\]
for every open set \(U\).

The same locality assumption gives compatibility with restriction maps.
Furthermore, \(D_\xi\) acts by derivations of local observables, and the
Hamiltonian operator \(\{H_\xi,-\}_{\mathrm{BV}}\) is also a derivation
of the observable product.  Their sum therefore satisfies the displayed
Leibniz rule for all factorization products.

Finally, the curvature of the family
\(\nabla^{\mathrm{BV}}_{\xi,U}\) is the action on observables of the
curvature of the BV splitting \(\sBV\).  Since \(\sBV\) is flat, this
curvature vanishes, with the coherent homotopies already encoded in the
BV stabilizer.  Thus the induced connection is flat as a connection on
the factorization algebra of classical observables.
\end{proof}

\begin{corollary}[Quantum factorization transport]
\label{cor:quantum-factorization-transport-cmp}
In the hypotheses of Theorem \ref{thm:factorization-transport-cmp}, assume, in addition, that the classical BV unfolding admits a quantum
lift
\(
 \splanckBV:\bbT_S\rightarrow\UplanckBV
\)
acting locally on a quantum factorization algebra
\(
 U\mapsto \Obsq(U).
\)

Then \(\Obsq\) carries a flat \(S\)-connection by quantum
factorization-algebra endomorphisms.  If the lift is constructed only up
to order \(\planck^{r+1}\), the obstruction to extending the horizontal
quantum factorization structure to the next order is the image of the BV
transport anomaly in the complex of local factorization derivations.
\end{corollary}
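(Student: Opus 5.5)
The plan is to run the proof of Theorem~\ref{thm:factorization-transport-cmp} again with the quantum lift in place of the classical splitting, and then get the obstruction statement from Theorem~\ref{thm:bv-anomaly-cmp}.

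\emph{Existence and horizontality.} Write $\splanckBV(\xi)$ locally as a filtered quantum BV symmetry whose classical limit is $(D_\xi,\lambda_\xi,H_\xi)$. By hypothesis it acts locally on $\Obsq$, so for each open $U\subset M$ it gives a filtered operator $\nabla^{\planck,\mathrm{BV}}_{\xi,U}$ on $\Obsq(U)$, compatible with restrictions. Proposition~\ref{prop:quantum-bv-transport-cmp}, applied to each $U$, gives $[Q_{\planck},\nabla^{\planck,\mathrm{BV}}_{\xi,U}]=0$. It also gives $[\nabla^{\planck,\mathrm{BV}}_{\xi,U},\nabla^{\planck,\mathrm{BV}}_{\zeta,U}]=\nabla^{\planck,\mathrm{BV}}_{\{[\xi,\zeta]\},U}$ up to the coherent homotopies of the $L_\infty$ model, which is flatness in the derived sense. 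For horizontality of the factorization products $\mu_{U_1,\dots,U_r;V}$ I will not try to argue via the commutative product, because the quantum observables are not a commutative algebra and $\{H,-\}$ is no longer a derivation of any product. Instead I will use the definition of the quantum symmetry extension for factorization algebras: elements of $\UplanckBV$ are local filtered derivations compatible with all factorization products. Horizontality is then this defining property evaluated on the image of $\splanckBV$. The reduction of the whole connection modulo $\planck$ is the classical connection of Theorem~\ref{thm:factorization-transport-cmp}, because $\clplanck\circ\splanckBV=\sBV$.

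\emph{Obstruction at finite order.} Suppose only a lift $s_r$ modulo $\planck^{r+1}$ is given. Theorem~\ref{thm:bv-anomaly-cmp} (that is, Theorem~\ref{thm:filtered-lifting-cmp}) says the obstruction to extending it is $\mathfrak a^{\mathrm{BV}}_{r+1}$, represented by the curvature $F_{\widetilde s_{r+1}}\in\Omega^2_S\otimes\gr^{r+1}_{\planck}\KplanckBV$ of any graded lift. Locality gives a morphism of filtered Lie objects from $\UplanckBV$ to the sheaf of local factorization derivations of $\Obsq$. This morphism induces a chain map on Chevalley--Eilenberg complexes with coefficients in $\gr^{r+1}_{\planck}$. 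The action of the curvature is exactly the defect in flatness of the partially horizontal factorization structure at order $\planck^{r+1}$. Its image class is therefore the obstruction on the factorization side, and it is independent of the chosen graded lift by the same curvature-change formula.

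\emph{Main obstacle.} The delicate step is ensuring that the map from the symmetry object to local factorization derivations is genuinely a filtered Lie morphism compatible with restrictions, rather than just a collection of operators indexed by the opens $U$. Only then does the obstruction complex map over, with horizontality and flatness holding coherently. I would build this into the locality hypothesis and check it chart by chart, using the restriction compatibility of the local operators.
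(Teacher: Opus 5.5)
Your proposal is correct and follows essentially the same route as the paper. Compatibility with $Q_{\planck}$ and horizontality of the factorization products come from the lift acting locally by filtered quantum factorization derivations, with flatness inherited from $\splanckBV$. The finite-order obstruction is the class of Theorem~\ref{thm:filtered-lifting-cmp}, pushed forward along the local action on factorization observables. Two of your points are left implicit in the paper's locality hypothesis: that horizontality must come from the defining property of the quantum symmetry object rather than a Leibniz argument for $\{H,-\}$, and that one needs a genuine filtered Lie morphism to local factorization derivations.
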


\begin{proof}
A quantum lift is, by definition, a flat filtered symmetry of the
quantum BV observable object.  Its action on each \(\Obsq(U)\) commutes
with the quantum differential and is compatible with factorization
products because the lift is local and acts by quantum factorization
derivations.  Flatness is the flatness of the morphism \(\splanckBV\).

If the lift exists only modulo \(\planck^{r+1}\), the filtered lifting
principle of Theorem~\ref{thm:filtered-lifting-cmp} gives an obstruction
class in the associated graded quantum vertical kernel.  Applying the
local action on factorization observables sends this class to the
corresponding obstruction in local factorization derivations.  This is
precisely the obstruction to extending horizontal quantum factorization
transport by one more order.
\end{proof}

\section{AKSZ transgression}\label{sec:aksz-cmp}

The AKSZ construction is the functorial mechanism by which shifted symplectic geometry produces classical BV theories.  In the present paper it has a second role: it transports unfoldings.  A flat target-side symmetry which preserves a shifted symplectic form, and possibly a Hamiltonian, should induce a flat symmetry of the mapping-stack theory obtained by transgression.  This section makes that assertion precise.

\smallskip

We keep the hypotheses explicit.  We do not prove representability of all mapping stacks, nor do we construct orientation integration in full generality.  Instead we work with an admissible AKSZ datum for which the standard Pantev--To\"en--Vaqui\'e--Vezzosi transgression operations are defined.  Under those hypotheses the statements below are formal consequences of three identities: integration commutes with the closed de Rham differential, with Lie derivatives of projectable vector fields, and with brackets.  These identities are what allow the controller formalism to pass from a target to the corresponding field theory.

\subsection{Oriented sources and transgression identities}\label{subsec:aksz-source-cmp}

Let $M$ be a compact $d$-oriented source in the sense relevant to shifted symplectic transgression: for every target considered below, the mapping stack
\[
 \mathcal M_Y:=\Map_S(M\times S,Y)
\]
exists, and the projection
\[
 p_M:M\times\mathcal M_Y\longrightarrow\mathcal M_Y
\]
admits an integration morphism
\[
 \int_M=(p_M)_*
\]
on closed-form complexes, of cohomological degree $-d$, compatible with pullback and homotopy.  We write
\[
 \ev:M\times\mathcal M_Y\longrightarrow Y
\]
for the evaluation map.  For a relative closed form or function-like Hamiltonian datum $\alpha$ on $Y/S$, define
\begin{equation}\label{eq:transgression-definition-cmp}
 \Tr_M(\alpha):=\int_M\ev^*\alpha.
\end{equation}
Thus, if
\(
 \alpha\in\mathcal A^{p,\mathrm{cl}}(Y/S,r)
\), then
\[
 \Tr_M(\alpha)\in\mathcal A^{p,\mathrm{cl}}(\mathcal M_Y/S,r-d).
\]
This is the shifted symplectic transgression of Pantev--To\"en--Vaqui\'e--Vezzosi \cite{PTVV}; the AKSZ construction is the corresponding field-theoretic interpretation \cite{AKSZ,CattaneoFelderAKSZ}.

A projectable derivation $D$ of $Y/S$ induces a derivation $D_M$ of the mapping stack by postcomposition.  Concretely, if a point of $\mathcal M_Y$ is a map $\varphi:M\to Y$, then the infinitesimal variation $D_M(\varphi)$ is the section of $\varphi^*T_Y$ obtained by applying $D$ along~$\varphi$.

\begin{lemma}[Transgression identities]\label{lem:transgression-identities-cmp}
For every admissible source $M$, every projectable target derivation $D$, and every transgressible closed-form or Hamiltonian datum $\alpha$, one has
\begin{align}
 d_{\mathrm{cl}}\Tr_M(\alpha)
 &=\Tr_M(d_{\mathrm{cl}}\alpha),
 \label{eq:transgression-differential-cmp}\\
 \Lieder_{D_M}\Tr_M(\alpha)
 &=\Tr_M(\Lieder_D\alpha).
 \label{eq:transgression-lie-cmp}
\end{align}
Moreover the assignment $D\mapsto D_M$ preserves brackets:
\begin{equation}\label{eq:transgression-bracket-cmp}
 [D_M,E_M]=[D,E]_M.
\end{equation}
If $\alpha$ is a Hamiltonian and the source has no boundary, the transgression also intertwines Hamiltonian differentials:
\begin{equation}\label{eq:transgression-hamiltonian-cmp}
 Q_{\Tr_M(\Theta)}\Tr_M(H)=\Tr_M(Q_\Theta H),
\end{equation}
where the bracket on the left is the transgressed shifted Poisson bracket.
\end{lemma}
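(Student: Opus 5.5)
The plan is to reduce all four identities to structural properties of the two operations $\ev^*$ and $\int_M=(p_M)_*$, each of which is compatible with the relevant differential and with flows along $S$. I would work in a strict model in which $\mathcal M_Y$ is presented by a cofibrant relative model and $\int_M$ is the fibre-integration map of the admissibility hypothesis. I proceed in four steps, one per identity.

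\emph{Identity \eqref{eq:transgression-differential-cmp}.} Pullback along any morphism, in particular $\ev$, is a morphism of graded mixed complexes, so it commutes with $d_{\mathrm{cl}}$ (internal differential plus de Rham). The integration morphism is by hypothesis a morphism of closed-form complexes of degree $-d$. Because $M$ is compact and oriented with no boundary, the de Rham part of $d_{\mathrm{cl}}$ along $M$ integrates to zero (Stokes), and the remaining part is exactly $d_{\mathrm{cl}}$ on $\mathcal M_Y$. Composing the two compatibilities gives the identity, with the sign $(-1)^d$ absorbed into the degree convention for $\int_M$.

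\emph{Identities \eqref{eq:transgression-lie-cmp} and \eqref{eq:transgression-bracket-cmp}.} Let $\widetilde D:=0\oplus D_M$ denote the vector field on $M\times\mathcal M_Y$ acting trivially along $M$.
\begin{itemize}
\item By construction of $D_M$ through postcomposition, $\widetilde D$ is $\ev$-related to $D$: $d\ev(\widetilde D)=D\circ\ev$. This is the universal property of the mapping stack applied to the infinitesimal automorphism $\exp(\epsilon D)$ of $Y$ over $S[\epsilon]$.
\item Related vector fields give $\Lieder_{\widetilde D}\ev^*=\ev^*\Lieder_D$. Since $D$ is projectable, the base directions match those of $M\times S$.
\item Since $\widetilde D$ is tangent to the $\mathcal M_Y$-factor and trivial along the fibres $M$, fibre integration commutes with its Lie derivative: $\int_M\Lieder_{\widetilde D}=\Lieder_{D_M}\int_M$. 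This follows from base-change compatibility of $(p_M)_*$ in the first-order family, or equivalently from Cartan's formula together with $\int_M\iota_{\widetilde D}=\iota_{D_M}\int_M$.
\item The bracket identity follows the same way. Postcomposition $\Map(M,-)$ is functorial in automorphisms over $S[\epsilon]$, hence over $S[\epsilon_1,\epsilon_2]$. Differentiating the group commutator of $\exp(\epsilon_1D)$ and $\exp(\epsilon_2E)$ therefore gives $[D_M,E_M]=[D,E]_M$.
\end{itemize}
In an $L_\infty$ setting this strict bracket statement upgrades to a morphism of derived Lie objects by naturality of the mapping-stack construction.

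\emph{Identity \eqref{eq:transgression-hamiltonian-cmp}, the main obstacle.} The key point is that the transgressed bracket $\{\Tr_M\Theta,\Tr_M H\}$ localizes. By PTVV the transgressed form $\omega_M$ is non-degenerate, and its inverse is built from the fibrewise pairing of $\varphi^*T_Y$ with itself via $\omega$ and Poincaré duality on $M$. I would therefore first compute the Hamiltonian vector field of $\Tr_M(H)$ and check that it is $(X_H)_M$, where $X_H$ is the Hamiltonian vector field of $H$. This is where boundarylessness enters: the variation of $\int_M\ev^*H$ produces no boundary term. It yields $\iota_{(X_H)_M}\omega_M=\Tr_M(\iota_{X_H}\omega)=\Tr_M(dH)=d\Tr_M(H)$, using contraction and differential compatibility as above. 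Then
\[
\{\Tr_M\Theta,\Tr_M H\}=\Lieder_{(X_\Theta)_M}\Tr_M H=\Tr_M(\Lieder_{X_\Theta}H)=\Tr_M\{\Theta,H\}.
\]
Adding \eqref{eq:transgression-differential-cmp} for the internal differential gives $Q_{\Tr_M\Theta}\Tr_M H=\Tr_M(Q_\Theta H)$. The delicate parts are the degree and sign bookkeeping with the shift $r\mapsto r-d$, and ensuring that the AKSZ differential includes the de Rham differential of $M$ acting on the mapping space. I would treat the latter as the internal differential of $\mathcal M_Y$, where it is already accounted for by the first identity.
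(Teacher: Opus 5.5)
Your proposal is correct and follows essentially the paper's route. Identity \eqref{eq:transgression-differential-cmp} comes from $\ev^*$ and $\int_M$ being compatible with the differentials. Identities \eqref{eq:transgression-lie-cmp} and \eqref{eq:transgression-bracket-cmp} come from $\ev$-relatedness of the postcomposition field, commuting Lie derivatives through fibre integration (via Cartan), and functoriality of postcomposition. The one difference is in \eqref{eq:transgression-hamiltonian-cmp}: the paper simply cites the PTVV identification of the transgressed bracket with the fibre-integrated target bracket, whereas you derive it from $X_{\Tr_M H}=(X_H)_M$ and \eqref{eq:transgression-lie-cmp}, which makes explicit where non-degeneracy of $\omega_M$ and the absence of boundary are used.
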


\begin{proof}
The first identity is the defining compatibility of the orientation integration map with the closed de Rham differential.  The second follows from naturality of evaluation.  Indeed, the vector field $D_M$ on the mapping stack is characterized by the identity
\[
 (1_M\times D_M)^*\ev^*(-)=\ev^*(D(-))
\]
at the infinitesimal level.  Applying this to the Cartan formula for the Lie derivative and then integrating along~$M$ gives~\eqref{eq:transgression-lie-cmp}.  Bracket preservation follows because postcomposition is functorial: the commutator of the two infinitesimal postcomposition actions is postcomposition by the commutator.

For Hamiltonian data, the Pantev--To\"en--Vaqui\'e--Vezzosi transgressed symplectic form identifies the Poisson bracket on the mapping stack with fibre integration of the target bracket, up to the standard orientation sign.  With the sign convention fixed by~\eqref{eq:transgression-definition-cmp}, this gives
\[
 \{\Tr_M(\Theta),\Tr_M(H)\}_{\mathcal M_Y}
 =\Tr_M(\{\Theta,H\}_Y),
\]
which is~\eqref{eq:transgression-hamiltonian-cmp}.  If the source has boundary, Stokes' formula contributes a boundary term; this is the reason for the boundary stabilizer introduced in Subsection~\ref{subsec:aksz-boundary-quantum-cmp}.
\end{proof}

\subsection{Symplectic and Hamiltonian target unfoldings}\label{subsec:aksz-target-unfoldings-cmp}

We now apply the transgression identities to stabilizers.  The slogan is that stabilizers commute with AKSZ transgression.  A target symmetry preserving a shifted symplectic form up to a coherent primitive transgresses to a mapping-stack symmetry preserving the transgressed shifted symplectic form up to the transgressed primitive.  If the target symmetry also preserves a Hamiltonian up to Hamiltonian homotopy, the induced mapping-stack symmetry preserves the AKSZ BV action.

Let $(Y/S,\omega)$ be a relative $n$-shifted symplectic target and set
\begin{equation}\label{eq:transgressed-symplectic-form-cmp}
 \omega_M:=\Tr_M(\omega)
 =\int_M\ev^*\omega
 \in\mathcal A^{2,\mathrm{cl}}(\mathcal M_Y/S,n-d).
\end{equation}

\begin{proposition}[Symplectic stabilizers transgress]\label{prop:symplectic-stabilizers-transgress-cmp}
Let $(D,\lambda)$ be a local strict element of the shifted symplectic stabilizer of $(Y/S,\omega)$, so that
\begin{equation}\label{eq:target-symplectic-stabilizer-cmp}
 d_{\mathrm{cl}}\lambda=\Lieder_D\omega.
\end{equation}
Then
\(
 (D_M,\Tr_M\lambda)
\)
is a local strict element of the shifted symplectic stabilizer of $(\mathcal M_Y/S,\omega_M)$:
\begin{equation}\label{eq:mapping-symplectic-stabilizer-cmp}
 d_{\mathrm{cl}}\Tr_M\lambda=\Lieder_{D_M}\omega_M.
\end{equation}
The assignment is compatible with brackets and therefore extends to a morphism of the corresponding stabilizer controllers.
\end{proposition}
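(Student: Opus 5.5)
The plan is to reduce everything to the transgression identities of Lemma~\ref{lem:transgression-identities-cmp}, applied termwise to the stabilizer equation~\eqref{eq:target-symplectic-stabilizer-cmp}. The key steps are as follows.

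First, check that $\Tr_M\lambda$ lies in the right complex. The primitive $\lambda$ lives in the tangent complex $\bbT_\omega\mathcal A^{2,\mathrm{cl}}(Y/S,n)$, which is a closed-form complex shifted by one. Since $\Tr_M=\int_M\ev^*$ has degree $-d$, the element $\Tr_M\lambda$ lands in the corresponding shifted closed-form complex for weight $n-d$. This is exactly the target of the displacement map defining the stabilizer of $\omega_M$ in~\eqref{eq:symplectic-stabilizer-cmp}.

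Second, apply $\Tr_M$ to both sides of~\eqref{eq:target-symplectic-stabilizer-cmp}:
\[
 d_{\mathrm{cl}}\Tr_M\lambda=\Tr_M(d_{\mathrm{cl}}\lambda)=\Tr_M(\Lieder_D\omega)=\Lieder_{D_M}\Tr_M(\omega)=\Lieder_{D_M}\omega_M .
\]
The first equality uses~\eqref{eq:transgression-differential-cmp}, the third uses~\eqref{eq:transgression-lie-cmp}, and the last is the definition~\eqref{eq:transgressed-symplectic-form-cmp}. This gives~\eqref{eq:mapping-symplectic-stabilizer-cmp}. For homogeneous elements of nonzero degree, the same computation goes through with the cone differential of Lemma~\ref{lem:homotopy-stabilizer-cmp}. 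The only point is that the suspension sign $(-1)^{|D|}$ is preserved, because $D\mapsto D_M$ is degree-preserving and $\Tr_M$ shifts every degree uniformly by $d$. I would record the sign $(-1)^{d|D|}$ if one appears and absorb it into the convention for $\Tr_M$ on the cone.

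Third, establish bracket compatibility. In a strict cone model, the bracket of two stabilizer pairs has the form
\[
 [(D,\lambda),(E,\mu)]=\bigl([D,E],\ \Lieder_D\mu-(-1)^{|D||E|}\Lieder_E\lambda\bigr),
\]
possibly plus a transferred correction. The first component is handled by~\eqref{eq:transgression-bracket-cmp}, and the second by~\eqref{eq:transgression-lie-cmp} applied to $\mu$ and $\lambda$. Hence $(D,\lambda)\mapsto(D_M,\Tr_M\lambda)$ is a strict morphism of cone dg-Lie models. The map also preserves the anchor, since postcomposition does not change the base symbol. It preserves the inner part as well, since inner derivations go to inner derivations of the mapping stack.

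The main obstacle is upgrading this strict statement to a morphism of the invariant $L_\infty$ stabilizer controllers. The cone does not generally carry the naive semidirect bracket (see the proof of Lemma~\ref{lem:homotopy-stabilizer-cmp}), so bracket compatibility in the strict model is not automatic at the invariant level. My first attempt would be to avoid comparing brackets directly. Instead I would use the universal property: the pair $D\mapsto D_M$, $\Tr_M$ forms a commutative square of arrows from $\bigl(D^1_{\bas}\to\bbT_\omega\mathcal A^{2,\mathrm{cl}}\bigr)$ to the corresponding arrow on $\mathcal M_Y$, equivariant for the actions. Homotopy fibres are functorial in such squares of actions, so the square induces a morphism of homotopy stabilizers. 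The strict formula above is then the linear component of this induced morphism.
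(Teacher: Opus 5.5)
Your proposal is correct and follows the paper's proof. The stabilizer equation comes from the same chain $d_{\mathrm{cl}}\Tr_M\lambda=\Tr_M(d_{\mathrm{cl}}\lambda)=\Tr_M(\Lieder_D\omega)=\Lieder_{D_M}\omega_M$ via Lemma~\ref{lem:transgression-identities-cmp}. Bracket compatibility likewise comes from \eqref{eq:transgression-bracket-cmp} together with the Lie-derivative identity on the cone component. Your use of functoriality of homotopy fibres under the commuting square of equivariant arrows is a somewhat more careful version of the paper's one-line remark that the higher $L_\infty$ components are transported by functoriality and multilinearity.
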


\begin{proof}
Using Lemma~\ref{lem:transgression-identities-cmp},
\[
 d_{\mathrm{cl}}\Tr_M\lambda
 =\Tr_M(d_{\mathrm{cl}}\lambda)
 =\Tr_M(\Lieder_D\omega)
 =\Lieder_{D_M}\Tr_M\omega
 =\Lieder_{D_M}\omega_M.
\]
This proves the stabilizer equation.  Bracket compatibility follows from~\eqref{eq:transgression-bracket-cmp} and from the bilinearity of integration.  The higher $L_\infty$ components, when present, are transported by the same functoriality of the mapping stack and by multilinearity of the brackets.
\end{proof}

\begin{theorem}[AKSZ transgression of symplectic unfoldings]\label{thm:aksz-symplectic-unfoldings-cmp}
Let $(Y/S,\omega)$ be an admissible relative shifted symplectic target and let
\[
 s_Y:\bbT_S\longrightarrow\mathbb U_\omega(Y/S)
\]
be a flat shifted symplectic unfolding.  Then $(\mathcal M_Y/S,\omega_M)$ carries a canonical flat shifted symplectic unfolding
\[
 s_M:\bbT_S\longrightarrow\mathbb U_{\omega_M}(\mathcal M_Y/S).
\]
If locally
\(
 s_Y(\xi)=(D_\xi,\lambda_\xi),
 \) with \(
 d_{\mathrm{cl}}\lambda_\xi=\Lieder_{D_\xi}\omega,
\)
then
\begin{equation}\label{eq:transgressed-splitting-cmp}
 s_M(\xi)=\bigl((D_\xi)_M,\Tr_M\lambda_\xi\bigr).
\end{equation}
For any non-flat transverse connection $\sigma_Y$, its transgression satisfies
\begin{equation}\label{eq:transgressed-curvature-cmp}
 F_{\sigma_M}=\Tr_M(F_{\sigma_Y}).
\end{equation}
As a consequence, a flat target unfolding transgresses to a flat mapping-stack unfolding, and the induced map of vertical kernels gives a morphism of deformation complexes
\begin{equation}\label{eq:transgressed-deformation-complex-cmp}
 \Tr_M:
 \mathbf{R}\Gamma\bigl(S,C^\bullet_{\CE}(\bbT_S;\mathbb K_{\omega,Y})\bigr)
 \longrightarrow
 \mathbf{R}\Gamma\bigl(S,C^\bullet_{\CE}(\bbT_S;\mathbb K_{\omega_M,\mathcal M_Y})\bigr).
\end{equation}
\end{theorem}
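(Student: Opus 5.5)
The proof will combine Proposition~\ref{prop:symplectic-stabilizers-transgress-cmp} with the transgression identities of Lemma~\ref{lem:transgression-identities-cmp}. It proceeds in four steps: build a morphism of controllers, compose it with $s_Y$, check the curvature formula directly, and then deduce the statement about deformation complexes. The key point is that the induced morphism is a morphism of derived Lie algebroids over $\bbT_S$. Once that is known, flatness and the Chevalley--Eilenberg map follow formally.

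\emph{Step 1: a transgression morphism of controllers.} Work first in a strict Hamiltonian (Darboux) chart. Define
\[
 \Tr_M:\mathbb U_\omega(Y/S)\longrightarrow\mathbb U_{\omega_M}(\mathcal M_Y/S),
 \qquad
 (D,\lambda)\longmapsto\bigl(D_M,\Tr_M\lambda\bigr).
\]
Proposition~\ref{prop:symplectic-stabilizers-transgress-cmp} shows that this preserves the stabilizer equation. Compatibility with the cone differential for homogeneous elements of arbitrary degree follows from \eqref{eq:transgression-differential-cmp} and \eqref{eq:transgression-lie-cmp}, using the Koszul signs of the shifted cone. Compatibility with brackets follows from \eqref{eq:transgression-bracket-cmp}.

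The anchor is also preserved. A projectable $D$ over $\xi$ induces $D_M$, which is projectable over the same $\xi$, because postcomposition does not touch the base of $\Map_S(M\times S,Y)\to S$. The inner part is handled the same way: a vertical Hamiltonian generator on $Y$ is sent to its transgression. These Hamiltonians are function-like data, so \eqref{eq:transgression-hamiltonian-cmp} shows that the inner map on the target is carried to the inner map on the mapping stack.

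Globally, this defines a morphism of crossed or $L_\infty$ algebroids over $\bbT_S$. The higher components are obtained by functoriality of $\ev$ and multilinearity of integration, and the construction descends by Theorem~\ref{thm:intrinsic-controller-cmp}.

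\emph{Step 2: the transgressed splitting.} Set $s_M:=\Tr_M\circ s_Y$. A composite of morphisms of derived Lie algebroids over $\bbT_S$ is again such a morphism. So $s_M$ is a point of $\Flat^{\der}_S(\bbT_S,\mathbb U_{\omega_M})$, and its local formula is \eqref{eq:transgressed-splitting-cmp}.

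\emph{Step 3: the curvature formula.} For a non-flat transverse connection $\sigma_Y$, set $\sigma_M=\Tr_M\circ\sigma_Y$. Expanding $F_{\sigma_M}(\xi,\zeta)$ using \eqref{eq:transgression-bracket-cmp} on the $D$-components gives $\Tr_M F_{\sigma_Y}$. On the $\lambda$-components one needs the transgressed bracket of cone elements. In the strict model these brackets are built from $\Lieder_D\lambda'$ terms, so \eqref{eq:transgression-lie-cmp} applies. Together these give \eqref{eq:transgressed-curvature-cmp}, and the claim ``flat transgresses to flat'' follows.

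\emph{Step 4: deformation complexes.} Because the morphism of Step~1 covers $\id_{\bbT_S}$, it restricts to a morphism of vertical fibres $\mathbb K_{\omega,Y}\to\mathbb K_{\omega_M,\mathcal M_Y}$. This morphism intertwines the adjoint actions of $s_Y$ and $s_M$, since it preserves brackets. By functoriality of $C^\bullet_{\CE}(\bbT_S;-)$ in the coefficient representation, and then applying $\mathbf R\Gamma(S,-)$, we obtain \eqref{eq:transgressed-deformation-complex-cmp}.

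\emph{Main obstacle.} I expect the hardest part to be Step~1 beyond the strict level. One must show that $\Tr_M$ respects the transferred $L_\infty$ brackets on the homotopy fibre, not just the naive cone brackets. My plan is to avoid transfer altogether. Transgression gives a map of the underlying action groupoids: postcomposition acting on closed forms, intertwined by $\int_M\ev^*$. Homotopy stabilizers are functorial for equivariant maps of formal moduli problems, so the tangent map at $\omega\mapsto\omega_M$ is automatically an $L_\infty$ morphism. The strict formulas above are then just its linear part.
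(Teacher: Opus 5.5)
Your proposal is correct and is essentially the paper's proof. The local formula \eqref{eq:transgressed-splitting-cmp} lands in the stabilizer by Proposition~\ref{prop:symplectic-stabilizers-transgress-cmp}; the curvature identity comes from bracket preservation \eqref{eq:transgression-bracket-cmp} on the derivation component together with transgression of the cone component; and the deformation-complex map is restriction to vertical kernels followed by Chevalley--Eilenberg functoriality. The only difference is organizational: you get flatness of $s_M$ by composing with a controller morphism, whereas the paper reads it off from $F_{\sigma_M}=\Tr_M(F_{\sigma_Y})$.

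One minor correction to your Step~1 concerns the inner part of $\mathbb U_\omega$. In the paper's Hamiltonian charts it is the Hamiltonian algebroid, i.e.\ relative one-forms or vector fields with inner cone homotopies as in Example~\ref{ex:cotangent-nullhomotopy-cmp}, not function-like Hamiltonians. The identity needed there is therefore compatibility of postcomposition and $\Tr_M$ with anchors and contractions, which follows from the same naturality of $\ev$ as \eqref{eq:transgression-lie-cmp}, rather than \eqref{eq:transgression-hamiltonian-cmp}. The paper leaves this step implicit as well.
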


\begin{proof}
The formula~\eqref{eq:transgressed-splitting-cmp} defines a splitting because the induced vector field $(D_\xi)_M$ projects to the same base vector field~$\xi$.  Proposition~\ref{prop:symplectic-stabilizers-transgress-cmp} proves that it lands in the symplectic stabilizer.

For curvature, compute in a strict chart.  The derivation component is
\[
 [(D_\xi)_M,(D_\zeta)_M]-(D_{[\xi,\zeta]})_M
 =([D_\xi,D_\zeta]-D_{[\xi,\zeta]})_M,
\]
by bracket preservation~\eqref{eq:transgression-bracket-cmp}.  The cone component is obtained by applying $\Tr_M$ to the target cone-curvature component, by Proposition~\ref{prop:symplectic-stabilizers-transgress-cmp}.  Thus the full stabilizer curvature satisfies
\[
 F_{\sigma_M}=\Tr_M(F_{\sigma_Y}).
\]
In particular, flatness of $\sigma_Y$ implies flatness of $\sigma_M$.  The vertical-kernel map is the same transgression restricted to vertical stabilizer elements; applying the Chevalley--Eilenberg functor of the base tangent algebroid gives~\eqref{eq:transgressed-deformation-complex-cmp}.
\end{proof}

Now suppose that the target has a Hamiltonian
\(
 \Theta\in\Gamma(Y,\mathcal O_Y[n+1])
\)
satisfying the classical master equation
\begin{equation}\label{eq:target-cme-cmp}
 d\Theta+\frac12\{\Theta,\Theta\}_\omega=0.
\end{equation}
Let
\(
 Q_\Theta=d+\{\Theta,-\}_\omega.
\)
The AKSZ action on $\mathcal M_Y$ is of the form
\begin{equation}\label{eq:aksz-action-general-cmp}
 I_{\mathrm{AKSZ}}=I_{\mathrm{src}}+\Tr_M(\Theta),
\end{equation}
where $I_{\mathrm{src}}$ denotes the source de Rham term.  In the critical degree
\(
 n-d=-1,
\)
the transgressed form is $(-1)$-shifted symplectic and~\eqref{eq:aksz-action-general-cmp} is a classical BV action.

\begin{definition}[Hamiltonian AKSZ stabilizer]\label{def:hamiltonian-aksz-stabilizer-cmp}
The Hamiltonian target stabilizer $\mathbb U_{\omega,\Theta}(Y/S)$ is the simultaneous homotopy stabilizer of $(\omega,\Theta)$.  In a strict cone model, a local element is a triple
\(
 (D,\lambda,H)
\)
satisfying
\begin{equation}\label{eq:hamiltonian-stabilizer-target-cmp}
 d_{\mathrm{cl}}\lambda=\Lieder_D\omega,
 \qquad
 Q_\Theta H=D(\Theta).
\end{equation}
\end{definition}

\begin{theorem}[Hamiltonian AKSZ transport]\label{thm:hamiltonian-aksz-transport-cmp}
Let
\(
 s_{Y,\Theta}:\bbT_S\rightarrow\mathbb U_{\omega,\Theta}(Y/S)
\)
be a flat Hamiltonian target unfolding.  Then transgression induces a flat Hamiltonian unfolding of the AKSZ mapping-stack theory.  

\smallskip

In a strict chart,
\(
 s_{Y,\Theta}(\xi)=(D_\xi,\lambda_\xi,H_\xi)
\)
transgresses to
\begin{equation}\label{eq:aksz-hamiltonian-transgressed-splitting-cmp}
 s_{\mathrm{AKSZ}}(\xi)=
 \bigl((D_\xi)_M,\Tr_M\lambda_\xi,\Tr_M H_\xi\bigr).
\end{equation}
If $n-d=-1$, this is a flat BV unfolding of
\(
 (\mathcal M_Y/S,\omega_M,I_{\mathrm{AKSZ}}).
\)
The induced connection on classical AKSZ BV observables is the BV transport connection of Theorem~\ref{thm:bv-transport-cmp}.
\end{theorem}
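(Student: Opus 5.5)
We follow the pattern of Theorem~\ref{thm:aksz-symplectic-unfoldings-cmp}: verify the stabilizer equations for the transgressed triple, then transgress curvature, and finally identify the induced connection using Theorem~\ref{thm:bv-transport-cmp}.

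\emph{Symplectic part.} The first two components $\bigl((D_\xi)_M,\Tr_M\lambda_\xi\bigr)$ form a flat shifted symplectic unfolding of $(\mathcal M_Y/S,\omega_M)$ by Theorem~\ref{thm:aksz-symplectic-unfoldings-cmp}. This uses Proposition~\ref{prop:symplectic-stabilizers-transgress-cmp} and the transgression identities \eqref{eq:transgression-differential-cmp}--\eqref{eq:transgression-bracket-cmp}.

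\emph{Hamiltonian part.} We must show that $Q_{I_{\mathrm{AKSZ}}}\Tr_M H_\xi=(D_\xi)_M(I_{\mathrm{AKSZ}})$. Split $I_{\mathrm{AKSZ}}=I_{\mathrm{src}}+\Tr_M(\Theta)$.
\begin{enumerate}[label=\textup{(\alph*)},leftmargin=2.2em]
\item For the transgressed term, \eqref{eq:transgression-hamiltonian-cmp} together with \eqref{eq:transgression-lie-cmp} gives
\[
 Q_{\Tr_M\Theta}\Tr_M H_\xi=\Tr_M(Q_\Theta H_\xi)=\Tr_M(D_\xi\Theta)=(D_\xi)_M\Tr_M\Theta .
\]
\item For the source term, we need
\[
 \{I_{\mathrm{src}},\Tr_M H_\xi\}_{\omega_M}=(D_\xi)_M I_{\mathrm{src}}
\]
modulo the appropriate cone correction. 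The bracket with $I_{\mathrm{src}}$ is the source de Rham differential $d_M$ acting on fields. Hence $\{I_{\mathrm{src}},\Tr_M H\}$ is $\int_M d_M(\ev^*H)$, which vanishes by Stokes because $M$ has no boundary. On the other side, $(D_\xi)_M I_{\mathrm{src}}$ is the variation of the kinetic term $\int\langle\varphi,d_M\varphi\rangle$ under postcomposition. Using $d_{\mathrm{cl}}\lambda_\xi=\Lieder_{D_\xi}\omega$, this variation equals $\int_M d_M(\ev^*\iota\lambda_\xi)$ plus a term absorbed by the symplectic component. It therefore also vanishes up to the cone homotopy already supplied by $\Tr_M\lambda_\xi$.
\end{enumerate}
We will phrase this cleanly as follows: $I_{\mathrm{src}}$ is the Hamiltonian of the vector field $d_M$ lifted to the mapping stack, and $(D_\xi)_M$ commutes with this lift because postcomposition commutes with precomposition. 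The mismatch is then exactly the Hamiltonian ambiguity, controlled by $\lambda_\xi$.

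\emph{Curvature.} As in \eqref{eq:transgressed-curvature-cmp}, the curvature of a transverse connection transgresses componentwise. This follows from bracket preservation and the compatibility of $\Tr_M$ with odd Poisson brackets, i.e.\ the $H$-component of the bracket in the simultaneous stabilizer. Flatness of $s_{Y,\Theta}$ then gives flatness of $s_{\mathrm{AKSZ}}$. Higher $L_\infty$ components are transported by multilinearity of integration.

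\emph{Critical degree.} When $n-d=-1$, $\omega_M$ is $(-1)$-shifted and $I_{\mathrm{AKSZ}}$ is a BV action. The triple \eqref{eq:aksz-hamiltonian-transgressed-splitting-cmp} satisfies \eqref{eq:bv-stabilizer-equations-cmp}, so it is a flat BV unfolding in the sense of Definition~\ref{def:bv-controller-cmp}. The induced operator $(D_\xi)_M+\{\Tr_M H_\xi,-\}_{\omega_M}$ is then literally the connection \eqref{eq:bv-corrected-connection-cmp}.

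The main obstacle is step~(b), the interaction of $(D_\xi)_M$ with the source term $I_{\mathrm{src}}$. I would first try to dispose of it by the commuting-vector-fields argument above and Stokes' theorem. I would keep the boundary term explicit only to note that it vanishes for closed $M$.
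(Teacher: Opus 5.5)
Your argument follows the paper's proof. The symplectic components come from Theorem~\ref{thm:aksz-symplectic-unfoldings-cmp}. The $\Theta$-term is handled by \eqref{eq:transgression-lie-cmp} and \eqref{eq:transgression-hamiltonian-cmp}, and the bracket of $I_{\mathrm{src}}$ with $\Tr_M H_\xi$ is killed by Stokes. Flatness is transported by bracket compatibility, and the critical-degree statement is read off from Theorem~\ref{thm:bv-transport-cmp}. The one difference is your step~(b). The paper only proves $(D_\xi)_M\Tr_M\Theta=Q_{I_{\mathrm{AKSZ}}}\Tr_M H_\xi$ and then asserts the stabilizer equation for $I_{\mathrm{AKSZ}}$; it never mentions $(D_\xi)_M I_{\mathrm{src}}$. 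So you are right that this term needs an argument.

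Your commuting-vector-fields idea settles it, but state its outcome precisely rather than saying the term vanishes ``up to the cone homotopy supplied by $\Tr_M\lambda_\xi$''. Let $V$ be the vector field on $\mathcal M_Y$ induced by the source differential, so that $\iota_V\omega_M=d_{\mathrm{cl}}I_{\mathrm{src}}$ up to sign. From $[(D_\xi)_M,V]=0$ and $\Lieder_{(D_\xi)_M}\omega_M=d_{\mathrm{cl}}\Tr_M\lambda_\xi$ you get
\[
 d_{\mathrm{cl}}\bigl((D_\xi)_M I_{\mathrm{src}}\bigr)=\iota_V\, d_{\mathrm{cl}}\Tr_M\lambda_\xi .
\]
Since $\Lieder_V\Tr_M\lambda_\xi=0$ for a closed source (again by Stokes), $(D_\xi)_M I_{\mathrm{src}}=\pm\iota_V\Tr_M\lambda_\xi$ up to a locally constant function.

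If $\lambda_\xi=0$, the term vanishes and your proof is complete. For a target of non-zero degree, $\Lieder_{D_\xi}$ of the primitive defining $I_{\mathrm{src}}$ is then closed, hence exact, and Stokes kills it. For the cotangent lifts of Section~\ref{sec:psm} this is just invariance of the Liouville form.

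If $\lambda_\xi\neq0$, the residual $\pm\iota_V\Tr_M\lambda_\xi$ is in general non-zero, and nothing in the strict equations \eqref{eq:bv-stabilizer-equations-cmp} absorbs it. The displayed triple \eqref{eq:aksz-hamiltonian-transgressed-splitting-cmp} then does not satisfy $\QBV H=D(\IBV)$ on the nose. You should therefore carry out~(b) in strict representatives with $\lambda_\xi=0$. The paper already uses this convention in the proof of Theorem~\ref{thm:bv-transport-cmp}, where $D_\xi$ is taken to be a strict derivation of the odd bracket.
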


\begin{proof}
The symplectic part is Theorem~\ref{thm:aksz-symplectic-unfoldings-cmp}.  For the Hamiltonian part, Lemma~\ref{lem:transgression-identities-cmp} gives
\[
 (D_\xi)_M\Tr_M(\Theta)
 =\Tr_M(D_\xi\Theta)
 =\Tr_M(Q_\Theta H_\xi).
\]
For a closed source, the source differential term in~\eqref{eq:aksz-action-general-cmp} acts on $\Tr_M(H_\xi)$ by the integral of a total source differential, hence vanishes by Stokes' theorem.  Therefore
\[
 \Tr_M(Q_\Theta H_\xi)=Q_{I_{\mathrm{AKSZ}}}\Tr_M(H_\xi).
\]
If the source has boundary, the missing Stokes term is the boundary contribution treated in Subsection~\ref{subsec:aksz-boundary-quantum-cmp}.  Hence the transgressed triple satisfies the Hamiltonian stabilizer equation in the closed case and, with boundary stabilizer data, in the boundary case.  Brackets and higher coherences are transported by the same identities, so flatness is preserved.  In critical degree, the target of the construction is a $(-1)$-shifted symplectic BV phase space, and Theorem~\ref{thm:bv-transport-cmp} gives the asserted observable transport.
\end{proof}

\subsection{Boundary data and quantum transgression}\label{subsec:aksz-boundary-quantum-cmp}

For a source with boundary, Stokes' formula produces a boundary term.  In AKSZ theory this term is cancelled by choosing a Lagrangian, or more generally compatible brane, in the target.  The controller statement is that the unfolding must stabilize this boundary datum as well.

\smallskip

Let $i:L\to Y$ be a relative Lagrangian boundary condition with Lagrangian structure~$\ell$.  If the target also carries the Hamiltonian~$\Theta$, define
\begin{equation}\label{eq:hamiltonian-lagrangian-stabilizer-cmp}
 \mathbb U_{\omega,\Theta,L}
 :=\Stab^{\mathrm h}(\omega,\Theta,\ell),
\end{equation}
the simultaneous homotopy stabilizer of the symplectic form, the Hamiltonian, and the Lagrangian structure.  In a strict chart its elements are tuples
\(
 (D,\lambda,H,K),
\)
where $K$ is the cone homotopy cancelling the variation of the Lagrangian boundary datum.

\begin{proposition}[Boundary and quantum AKSZ transgression]\label{prop:boundary-quantum-aksz-cmp}
Let $(M,\partial M)$ be an admissible oriented source with boundary and let
\(
 \Map_S((M,\partial M)\times S,(Y,L))
\)
be the constrained mapping stack.  A flat splitting
\[
 s_{Y,\Theta,L}:\bbT_S\longrightarrow\mathbb U_{\omega,\Theta,L}
\]
induces a flat AKSZ/BV unfolding of the constrained mapping-stack theory preserving the boundary condition.

\smallskip

Assume in addition that the target and AKSZ theories carry compatible filtered quantum symmetry extensions and that AKSZ transgression lifts to a filtered morphism of quantum vertical kernels
\[
 \Tr_M^{\planck}:
 \gr^r_{\planck}\mathbb K^{Y}_{\planck,\omega,\Theta}
 \longrightarrow
 \gr^r_{\planck}\mathbb K_{\planck,\mathrm{AKSZ}}
\]
for every~$r$.  Then the AKSZ transport anomaly is the transgression of the target anomaly:
\begin{equation}\label{eq:aksz-anomaly-transgression-cmp}
 \mathfrak a^{\mathrm{AKSZ}}_{r+1}
 =\Tr_M^{\planck}(\mathfrak a^Y_{r+1}).
\end{equation}
In particular, vanishing of the target transport anomaly implies vanishing of the induced AKSZ transport anomaly.  %If quantum observables form a factorization algebra, the resulting quantum transport preserves all factorization products.

Moreover, suppose that the quantum AKSZ BV theory is realised by a
quantum factorization algebra of observables
\(
 U\mapsto \operatorname{Obs}^{\mathrm q}_{\planck,\mathrm{AKSZ}}(U),
\)
and that the quantum AKSZ symmetry acts locally, by filtered
factorization derivations.  Then the induced quantum transport is a flat
connection by factorization-algebra endomorphisms; equivalently, all
factorization products are horizontal.
\end{proposition}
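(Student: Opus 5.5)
The proof has three parts, in the order of the statement: the classical boundary stabilizer, naturality of the filtered obstruction class, and the factorization statement.

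\emph{Classical part.} I would reduce to Theorem~\ref{thm:hamiltonian-aksz-transport-cmp}, whose only gap in the boundary case is the missing Stokes term, and use the extra component $K$ to close that gap. Fix a local strict representative $s_{Y,\Theta,L}(\xi)=(D_\xi,\lambda_\xi,H_\xi,K_\xi)$. First, the derivation $D_\xi$ is tangent to $L$ up to the homotopy $K_\xi$: the Lagrangian stabilizer equation says that $i^*\lambda_\xi$ differs from the variation of $\ell$ by $d_{\mathrm{cl}}K_\xi$. Hence postcomposition $(D_\xi)_M$ preserves the constrained mapping stack $\Map_S((M,\partial M)\times S,(Y,L))$ up to the transgressed homotopy. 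Next, repeat the computation of Theorem~\ref{thm:hamiltonian-aksz-transport-cmp}, keeping the Stokes term. Under $Q_{I_{\mathrm{AKSZ}}}$, the source de Rham term applied to $\Tr_M H_\xi$ produces $\int_{\partial M}\ev^*H_\xi$. By the standard AKSZ boundary analysis, the boundary values lie in $L$ and $\Theta|_L$ and $\ell$ satisfy the Lagrangian compatibility, so this boundary integral is cancelled by the transgression $\int_{\partial M}\ev^*K_\xi$ of the boundary homotopy. One must therefore define the transgressed splitting as
\[
 \bigl((D_\xi)_M,\ \Tr_M\lambda_\xi+\textstyle\int_{\partial M}\ev^*K_\xi,\ \Tr_M H_\xi\bigr)
\]
or similar, so that it satisfies the BV stabilizer equations~\eqref{eq:bv-stabilizer-equations-cmp} on the constrained mapping stack. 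Flatness then follows exactly as in Theorem~\ref{thm:aksz-symplectic-unfoldings-cmp}: the bracket identity~\eqref{eq:transgression-bracket-cmp} together with linearity of bulk and boundary integration gives $F_{\sigma_M}=\Tr_M(F_{\sigma_Y})$, boundary component included.

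\emph{Anomaly part.} This is naturality of the obstruction class of Theorem~\ref{thm:filtered-lifting-cmp} under filtered morphisms of quantum symmetry extensions. Take a target lift $s_r$ modulo $\planck^{r+1}$ and a graded lift $\widetilde s_{r+1}$. I would assume that $\Tr^{\planck}_M$ is induced by a filtered morphism of the full extensions $\mathbb U^Y_{\planck}\to\mathbb U_{\planck,\mathrm{AKSZ}}$ over $\bbT_S$; this is the reading of ``compatible'' I adopt. Then $\Tr^{\planck}_M\widetilde s_{r+1}$ is a graded lift of the transgressed partial lift. Since the morphism preserves brackets and anchors, its curvature is $\Tr^{\planck}_M(F_{\widetilde s_{r+1}})$. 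The morphism also intertwines the coefficient actions on $\gr^{r+1}_\planck$ kernels, so it induces a chain map of obstruction complexes~\eqref{eq:quantum-obstruction-complex-cmp}. Passing to $H^2$ gives~\eqref{eq:aksz-anomaly-transgression-cmp}. The vanishing statement is then immediate, and applying this order by order shows that a target quantum unfolding transgresses to an AKSZ one.

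\emph{Factorization part.} Given the quantum AKSZ unfolding, I would argue as in Corollary~\ref{cor:quantum-factorization-transport-cmp}. Locality of the action gives operators on each $\Obs^{\mathrm q}_{\planck,\mathrm{AKSZ}}(U)$ that commute with restrictions. Because these operators are factorization derivations, every factorization product is horizontal. Flatness of $\splanck$ gives vanishing curvature.

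I expect the main obstacle to be the boundary cancellation in the classical part. It requires fixing the correct sign and degree conventions for $K$ relative to $\ell$ and $\lambda$, and checking that the boundary term from $\{I_{\mathrm{src}},\Tr_MH_\xi\}$ is exactly the pullback of the Lagrangian homotopy. The rest of the argument is formal functoriality.
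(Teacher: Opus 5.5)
Your proposal is correct and follows the paper's proof in all three parts. These are: Stokes' formula with the Lagrangian component $K$ absorbing the $\partial M$ contribution, plus the bracket-compatibility argument of Theorem~\ref{thm:aksz-symplectic-unfoldings-cmp} for flatness; naturality of the associated-graded curvature class under the filtered transgression morphism; and reduction of the factorization claim to Corollary~\ref{cor:quantum-factorization-transport-cmp}. Your explicit reading of ``compatible'' as a filtered morphism of the full extensions over $\bbT_S$ is what the paper's one-line naturality argument tacitly uses. The only loose end is the boundary bookkeeping you already flag: you derive the Stokes term from the Hamiltonian equation but place $\int_{\partial M}\ev^*K_\xi$ in the $\lambda$-slot, whereas the paper attributes it to $d_{\mathrm{cl}}$ failing to commute with $\int_M$; the paper leaves this equally schematic.
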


\begin{proof}
For the classical boundary statement, apply Stokes' formula to the transgression of the target stabilizer equations.  The failure of $d_{\mathrm{cl}}$ to commute with integration over a manifold with boundary is precisely the integral over~$\partial M$.  The Lagrangian stabilizer component~$K$ is defined so that this boundary contribution is null-homotopic.  Thus the constrained mapping-stack action preserves both the bulk BV data and the boundary condition.  Flatness follows from the same bracket-compatibility argument as in Theorem~\ref{thm:aksz-symplectic-unfoldings-cmp}.

For the quantum statement, the obstruction class at order $r+1$ is represented by the associated-graded curvature of a partial quantum lift.  A filtered morphism of quantum kernels sends the partial target curvature to the partial AKSZ curvature and commutes with the Bianchi differential.  Therefore it sends the cohomology class representing the target anomaly to the class representing the AKSZ anomaly, giving~\eqref{eq:aksz-anomaly-transgression-cmp}.  %The factorization-algebra assertion is then Theorem~\ref{thm:factorization-transport-cmp} applied to the AKSZ quantum BV theory.

Under the additional factorization-algebra hypothesis, the quantum AKSZ
transport constructed above is a local quantum BV transport.  It acts on
each open-set observable complex, commutes with the quantum BV
differential, and is compatible with all factorization products because
the action is by filtered factorization derivations.  Hence the
factorization-algebra assertion is precisely
Corollary~\ref{cor:quantum-factorization-transport-cmp}, applied to the
AKSZ quantum BV theory.
\end{proof}

\section{The Poisson sigma model}\label{sec:psm}

The Poisson sigma model is the basic two-dimensional AKSZ theory attached to an ordinary Poisson manifold.  It is also the field theory whose perturbative quantization on the disk yields Kontsevich's star-product formula \cite{CattaneoFelderPath,KontsevichDQ}.  For the purposes of this paper it is the decisive example: every layer of the controller formalism appears explicitly.  The target-side Poisson controller becomes a Hamiltonian AKSZ stabilizer on $T^*[1]N$; transgression gives BV transport on fields; the quantum lifting obstruction becomes a transport anomaly; and the boundary observables recover deformation quantization.

We use $N$ for the Poisson target in this section, to avoid confusion with the AKSZ source used in Section~\ref{sec:aksz-cmp}.  The discussion is written in the smooth perturbative category.  The same formulas have formal or algebraic analogues when the corresponding mapping stacks, BV complexes, and integration maps are replaced by their derived models.

\subsection{Classical action and AKSZ target}\label{subsec:psm-classical-aksz-cmp}

Let $(N,\pi)$ be a finite-dimensional Poisson manifold and let $\Sigma$ be an oriented surface.  The ghost-number-zero fields are a map
\(
 X:\Sigma\rightarrow N
\)
and a one-form
\(
 \eta\in\Omega^1(\Sigma,X^*T^*N).
\)
In local coordinates on~$N$, the classical action is
\begin{equation}\label{eq:psm-classical-action}
 I^{\mathrm{cl}}_{\PSM}(X,\eta)
 =\int_\Sigma
 \left(
 \eta_i\wedge dX^i
 +\frac12\pi^{ij}(X)\eta_i\wedge\eta_j
 \right).
\end{equation}
The Euler--Lagrange equations are
\begin{equation}\label{eq:psm-eom}
 dX^i+\pi^{ij}(X)\eta_j=0,
 \qquad
 d\eta_i+\frac12\partial_i\pi^{jk}(X)\eta_j\wedge\eta_k=0.
\end{equation}
The infinitesimal gauge transformations, with parameter $\epsilon\in\Gamma(X^*T^*N)$, are
\begin{equation}\label{eq:psm-gauge}
 \delta_\epsilon X^i=\pi^{ij}(X)\epsilon_j,
 \qquad
 \delta_\epsilon\eta_i
 =-d\epsilon_i-\partial_i\pi^{jk}(X)\eta_j\epsilon_k.
\end{equation}
Their algebra closes on shell in general, which is why the BV--AKSZ formulation is the natural one \cite{SchallerStrobl,IkedaPSM,CattaneoFelderAKSZ}.

The AKSZ target is
\(
 \mathcal T_N:=T^*[1]N.
\)
With coordinates $(x^i,p_i)$ of degrees $(0,1)$, it has the canonical degree-one symplectic form
\[
 \omega_{\mathrm{can}}=\delta x^i\,\delta p_i.
\]
The Poisson tensor is encoded by the degree-two Hamiltonian
\begin{equation}\label{eq:psm-target-hamiltonian}
 \Theta_\pi=\frac12\pi^{ij}(x)p_ip_j.
\end{equation}
Under the standard big-bracket identification between fibrewise-polynomial functions on $T^*[1]N$ and polyvector fields on~$N$,
\begin{equation}\label{eq:poisson-master-equivalence-cmp}
 \{\Theta_\pi,\Theta_\pi\}_{\mathrm{can}}=0
 \quad\Longleftrightarrow\quad
 [\pi,\pi]_{\mathrm{Sch}}=0.
\end{equation}
Thus $Q_\pi:=\{\Theta_\pi,-\}_{\mathrm{can}}$ is the cohomological vector field corresponding to the cotangent Lie algebroid of~$N$.

The BV field space is the mapping space of graded manifolds
\begin{equation}\label{eq:psm-fields-cmp}
 \Fields_\Sigma(N):=\Map\bigl(T[1]\Sigma,T^*[1]N\bigr).
\end{equation}
Let \(u^\alpha\), \(\alpha=1,2\), be local oriented coordinates on
\(\Sigma\), and let \(\theta^\alpha\) be the corresponding odd fibre
coordinates on \(T[1]\Sigma\).  Thus
\[
 C^\infty(T[1]\Sigma)\simeq\Omega^\bullet(\Sigma),
 \qquad
 d_\Sigma=\theta^\alpha\frac{\partial}{\partial u^\alpha}
\]
is the de Rham vector field on the source.  A map
\(
 \Phi:T[1]\Sigma\rightarrow T^*[1]N
\)
is equivalently given, in local coordinates on \(N\), by superfields
\[
 \mathbf X^i:=\Phi^*(x^i),
 \qquad
 \mathbf A_i:=\Phi^*(p_i),
\]
of total degrees \(0\) and \(1\), respectively.  Under the identification
\(C^\infty(T[1]\Sigma)=\Omega^\bullet(\Sigma)\), these superfields are
inhomogeneous differential forms on \(\Sigma\), with ghost numbers
arranged so that
\[
 |\mathbf X^i|=0,\qquad |\mathbf A_i|=1.
\]
For a two-dimensional source, one may write locally
\[
 \mathbf X^i
 =
 X^i
 +\theta^\alpha \eta^{+\,i}_{\alpha}
 +\frac12\theta^\alpha\theta^\beta c^{+\,i}_{\alpha\beta}, \qquad 
 \mathbf A_i
 =
 c_i
 +\theta^\alpha \eta_{i,\alpha}
 +\frac12\theta^\alpha\theta^\beta X^+_{i,\alpha\beta}.
\]
Here \(X^i\) and \(\eta_i=\eta_{i,\alpha}du^\alpha\) are the classical
ghost-number-zero fields, \(c_i\) is the ghost, and the remaining
components are antifields.  The notation is local and only serves to
display the AKSZ degree convention.

AKSZ transgression of the canonical degree-one symplectic form on
\(T^*[1]N\) gives the degree \(-1\) BV symplectic form
\[
 \Omega_{\PSM}
 =
 \int_{T[1]\Sigma}
 \delta\mathbf X^i\,\delta\mathbf A_i ,
\]
where the integral is a Berezin integral. The corresponding AKSZ action is
\begin{equation}\label{eq:psm-bv-action}
 I_{\PSM}
 =
 \int_{T[1]\Sigma}
 \left(
 \mathbf A_i\,d_\Sigma\mathbf X^i
 +\frac12\pi^{ij}(\mathbf X)\mathbf A_i\mathbf A_j
 \right).
\end{equation}
Products in the integrand are products of functions on \(T[1]\Sigma\),
hence become wedge products of differential forms after the identification
with \(\Omega^\bullet(\Sigma)\), with the usual Koszul signs.

For a closed source, the AKSZ master equation for
\(I_{\PSM}\) follows from
\[
 d_\Sigma^2=0,
 \qquad
 \{\Theta_\pi,\Theta_\pi\}_{\mathrm{can}}=0,
\]
and from Stokes' theorem.  If \(\Sigma\) has boundary, the same local
formula is used, but the boundary term produced by Stokes' theorem must
be cancelled by a compatible boundary condition, equivalently by the
BV--BFV/Lagrangian boundary datum discussed in
Section~\ref{sec:aksz-cmp}.  The ghost-number-zero truncation of
\eqref{eq:psm-bv-action} is exactly the classical action
\eqref{eq:psm-classical-action}.  This is the Poisson sigma model in
AKSZ form~\cite{AKSZ,CattaneoFelderAKSZ}.

\subsection{Flat Poisson families and BV transport}\label{subsec:psm-flat-families-cmp}

Let now $q:N\to S$ be a smooth family of finite-dimensional Poisson manifolds with relative Poisson tensor~$\pi$.  Let
\(
 s:\bbT_S\longrightarrow\mathbb U_\pi
\)
be a flat Poisson unfolding.  In the ordinary smooth case, a local strict representative of $s(\xi)$ consists of a projectable vector field $Y_\xi$ on~$N$, lifting $\xi$, and a relative vector field $Z_\xi$ such that
\begin{equation}\label{eq:psm-target-splitting}
 [\pi,Z_\xi]_{\mathrm{Sch}}=\Lieder_{Y_\xi}\pi.
\end{equation}
With the convention $d_\pi=[\pi,-]$, this is the cone equation $d_\pi Z_\xi=\Lieder_{Y_\xi}\pi$.  Since $[\pi,Z]=-\Lieder_Z\pi$ for a vector field~$Z$ in this convention, the condition is equivalently
\[
 \Lieder_{Y_\xi+Z_\xi}\pi=0;
\]
that is, $Y_\xi+Z_\xi$ is a projectable Poisson vector field.

For an ordinary vector field $Y=Y^i\partial_i$ on~$N$, let $Y^{\mathrm{cot}}$ denote its cotangent lift to $T^*[1]N$, and put
\[
 \ell_Y:=Y^i(x)p_i.
\]
The following coordinate calculation fixes the sign convention used below.

\begin{lemma}[Cotangent lift and Hamiltonian cone identity]\label{lem:psm-cotangent-identities-cmp}
With the big-bracket convention in which $\{x^i,p_j\}=\delta^i_j$, one has
\begin{align}
 \Lieder_{Y^{\mathrm{cot}}}\omega_{\mathrm{can}}&=0,
 \label{eq:cot-lift-symplectic-cmp}\\
 \Lieder_{Y^{\mathrm{cot}}}\Theta_\pi&=\Theta_{\Lieder_Y\pi},
 \label{eq:cot-lift-theta-cmp}\\
 \{\Theta_\pi,\ell_Z\}_{\mathrm{can}}&=\Theta_{[\pi,Z]_{\mathrm{Sch}}}.
 \label{eq:cot-lift-cone-cmp}
\end{align}
Consequently, whenever~\eqref{eq:psm-target-splitting} holds,
\begin{equation}\label{eq:psm-hamiltonian-stabilizer-eq-cmp}
 \Lieder_{Y_\xi^{\mathrm{cot}}}\Theta_\pi
 =Q_\pi\ell_{Z_\xi}.
\end{equation}
\end{lemma}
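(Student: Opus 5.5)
The proof is a direct coordinate computation on $T^*[1]N$ using the coordinates $(x^i,p_i)$, $|x^i|=0$, $|p_i|=1$, and the bracket convention $\{x^i,p_j\}=\delta^i_j$. We record the cotangent lift explicitly as
\[
 Y^{\mathrm{cot}}=Y^i\partial_{x^i}-\partial_iY^j\,p_j\,\partial_{p_i}.
\]
This is the Hamiltonian vector field of $\ell_Y=Y^ip_i$, up to the sign fixed by the convention. Three features of the setup will be used throughout: every function in play is at most quadratic in the $p$'s, the degrees are low, and so the Koszul signs are few and checkable by hand.

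For \eqref{eq:cot-lift-symplectic-cmp}, we use that $Y^{\mathrm{cot}}$ is Hamiltonian, namely $Y^{\mathrm{cot}}=\pm\{\ell_Y,-\}_{\mathrm{can}}$. Cartan's formula then gives $\Lieder_{Y^{\mathrm{cot}}}\omega_{\mathrm{can}}=\delta\iota_{Y^{\mathrm{cot}}}\omega_{\mathrm{can}}=\pm\delta\delta\ell_Y=0$. Alternatively, one can check directly that $\Lieder(\delta x^i\,\delta p_i)$ produces the two terms $\delta Y^i\,\delta p_i$ and $-\delta x^i\,\delta(\partial_iY^jp_j)$, which cancel.

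For \eqref{eq:cot-lift-theta-cmp}, we apply $Y^{\mathrm{cot}}$ to $\tfrac12\pi^{ij}p_ip_j$. This yields
\[
 \tfrac12 Y^k\partial_k\pi^{ij}p_ip_j-\tfrac12\pi^{ij}\bigl(\partial_iY^kp_kp_j+p_i\,\partial_jY^kp_k\bigr).
\]
After relabelling and using antisymmetry of $\pi$, the result is $\tfrac12(\Lieder_Y\pi)^{ij}p_ip_j$, where
\[
 (\Lieder_Y\pi)^{ij}=Y^k\partial_k\pi^{ij}-\pi^{kj}\partial_kY^i-\pi^{ik}\partial_kY^j.
\]
The only care needed is the ordering of the odd variables $p_i$ when moving $\partial_{p_i}$ past $p$'s.

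For \eqref{eq:cot-lift-cone-cmp}, we expand $\{\tfrac12\pi^{ij}p_ip_j,\,Z^kp_k\}$ with the Leibniz rule into an $x$-derivative-of-$\pi$ term and an $x$-derivative-of-$Z$ term. We then compare with the coordinate formula for the Schouten bracket $[\pi,Z]$ under the big-bracket identification, which is the same identification used in \eqref{eq:poisson-master-equivalence-cmp}. Since that identification is itself \emph{defined} by $\{\Theta_\pi,-\}$ on linear functions, the identity reduces to matching with $[\pi,Z]=-\Lieder_Z\pi$, the sign recorded in \eqref{eq:vector-field-poisson-variation-cmp}. Equivalently, it suffices to show $\{\Theta_\pi,\ell_Z\}=-\Lieder_{Z^{\mathrm{cot}}}\Theta_\pi$, which again reduces to the computation already done for \eqref{eq:cot-lift-theta-cmp}. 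I expect this sign bookkeeping to be the only real obstacle: the signs of the Hamiltonian vector field of a degree-one function and of the derived bracket must be fixed once and used consistently. I would calibrate them on the test case $\pi=\partial_1\wedge\partial_2$, $Z=x^1\partial_1$.

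Finally, \eqref{eq:psm-hamiltonian-stabilizer-eq-cmp} follows by combining the pieces. By \eqref{eq:cot-lift-theta-cmp} and \eqref{eq:psm-target-splitting}, we have $\Lieder_{Y_\xi^{\mathrm{cot}}}\Theta_\pi=\Theta_{\Lieder_{Y_\xi}\pi}=\Theta_{[\pi,Z_\xi]}$. By \eqref{eq:cot-lift-cone-cmp} this equals $\{\Theta_\pi,\ell_{Z_\xi}\}=Q_\pi\ell_{Z_\xi}$, using linearity of $\pi\mapsto\Theta_\pi$ in the bivector.
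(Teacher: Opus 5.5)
Your treatment of \eqref{eq:cot-lift-symplectic-cmp}, \eqref{eq:cot-lift-theta-cmp} and the final chain is fine. In fact \eqref{eq:cot-lift-theta-cmp} does not depend on the bracket normalization at all, because it only uses the explicit vector field $Y^{\mathrm{cot}}=Y^i\partial_{x^i}-\partial_iY^jp_j\,\partial_{p_i}$.

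The gap is in \eqref{eq:cot-lift-cone-cmp}, exactly at the sign you defer to a ``calibration''. Your reduction to $\{\Theta_\pi,\ell_Z\}=-\Lieder_{Z^{\mathrm{cot}}}\Theta_\pi$ needs $\{\ell_Z,-\}=Z^{\mathrm{cot}}$, and that fails under the stated normalization:
\begin{itemize}
\item With $\{x^i,p_j\}=\delta^i_j$, graded antisymmetry of a degree $-1$ bracket forces $\{p_j,x^i\}=-\delta^i_j$.
\item Hence $\{\ell_Z,x^i\}=-Z^i$ and $\{\ell_Z,p_j\}=\partial_jZ^kp_k$, i.e.\ $\{\ell_Z,-\}=-Z^{\mathrm{cot}}$.
\item Since $|\ell_Z|=1$, this gives $\{F,\ell_Z\}=-\{\ell_Z,F\}=Z^{\mathrm{cot}}F$ for every $F$.
\item Therefore $\{\Theta_\pi,\ell_Z\}=+\Lieder_{Z^{\mathrm{cot}}}\Theta_\pi=\Theta_{\Lieder_Z\pi}$. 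With the convention $[\pi,Z]_{\mathrm{Sch}}=-\Lieder_Z\pi$ that you invoke, this equals $-\Theta_{[\pi,Z]_{\mathrm{Sch}}}$.
\end{itemize}
Your own test case shows this. For $\pi=\partial_1\wedge\partial_2$ and $Z=x^1\partial_1$ one gets $\{p_1p_2,x^1p_1\}=-p_1p_2$. On the other hand $[\pi,Z]_{\mathrm{Sch}}=-\Lieder_Z\pi=\pi$, so $\Theta_{[\pi,Z]}=p_1p_2$. Fed into your last paragraph, this gives $\Lieder_{Y_\xi^{\mathrm{cot}}}\Theta_\pi=-Q_\pi\ell_{Z_\xi}$, not the claimed identity.

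Saying the identification is ``defined by'' the big bracket does not rescue the step. If $[\cdot,\cdot]_{\mathrm{Sch}}$ is defined through $\{\Theta_P,\Theta_Q\}$, then \eqref{eq:cot-lift-cone-cmp} becomes a tautology. But with this normalization it then forces $[\pi,Z]_{\mathrm{Sch}}=+\Lieder_Z\pi$. In that case \eqref{eq:psm-target-splitting} no longer means $\Lieder_{Y_\xi+Z_\xi}\pi=0$, and the sign you want to match against has changed.

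The paper's proof avoids this only because it never computes. It quotes $\{\Theta_P,\Theta_Q\}=\Theta_{[P,Q]_{\mathrm{Sch}}}$ and takes $Y^{\mathrm{cot}}$ to be the Hamiltonian vector field $\{\ell_Y,-\}$ (Appendix~\ref{app:psm-cotangent-identities}), then reads off \eqref{eq:cot-lift-theta-cmp} and \eqref{eq:cot-lift-cone-cmp} as special cases. Those quoted facts, together with $[Y,\pi]_{\mathrm{Sch}}=\Lieder_Y\pi$, hold for the opposite normalization $\{p_j,x^i\}=\delta^i_j$ (equivalently $\{x^i,p_j\}=-\delta^i_j$), under which $\{\ell_Y,f\}=Y(f)$.

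With that normalization your coordinate route works verbatim and is a legitimate, more explicit alternative to the paper's. You get $\{\ell_Z,-\}=Z^{\mathrm{cot}}$, hence $\{\Theta_\pi,\ell_Z\}=-\Lieder_{Z^{\mathrm{cot}}}\Theta_\pi=-\Theta_{\Lieder_Z\pi}=\Theta_{[\pi,Z]_{\mathrm{Sch}}}$, and your final chain then yields \eqref{eq:psm-hamiltonian-stabilizer-eq-cmp}. So the decisive step cannot be left as deferred bookkeeping. You must fix the normalization explicitly, or replace $\ell_{Z_\xi}$ by $-\ell_{Z_\xi}$, because with the normalization printed in the statement the identity you reduce to is false.
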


\begin{proof}
With the convention $\{x^i,p_j\}=\delta^i_j$, the cotangent lift is the Hamiltonian vector field generated by the linear function~$\ell_Y$ with the corresponding big-bracket sign convention; in particular it preserves the canonical symplectic form.  The assignment
\[
 P\in\Gamma(\wedge^r TN)\longmapsto
 \Theta_P:=\frac1{r!}P^{i_1\cdots i_r}(x)p_{i_1}\cdots p_{i_r}
\]
identifies the Schouten bracket with the canonical big bracket, using the convention stated in the lemma.  Under this identification, the infinitesimal action of the cotangent lift of~$Y$ on fibrewise-polynomial functions is the Lie derivative of polyvectors, which gives~\eqref{eq:cot-lift-theta-cmp}.  Similarly, the bracket of $\Theta_\pi$ with the linear Hamiltonian~$\ell_Z$ corresponds to $[\pi,Z]_{\mathrm{Sch}}$, giving~\eqref{eq:cot-lift-cone-cmp}.  Combining~\eqref{eq:cot-lift-theta-cmp},~\eqref{eq:cot-lift-cone-cmp}, and~\eqref{eq:psm-target-splitting} proves~\eqref{eq:psm-hamiltonian-stabilizer-eq-cmp}.
\end{proof}

Thus the triple
\begin{equation}\label{eq:psm-target-stabilizer}
 \bigl(Y_\xi^{\mathrm{cot}},0,\ell_{Z_\xi}\bigr)
\end{equation}
is a local element of the simultaneous symplectic-Hamiltonian stabilizer of the PSM target.  Flatness of~$s$ implies that these elements satisfy the Hamiltonian target-controller flatness equations, including the coherent cone homotopies.

\begin{theorem}[Flat BV transport for the Poisson sigma model]\label{thm:psm-classical-transport}
Let $(N/S,\pi,s)$ be a smooth controller-admissible Poisson family equipped with a flat unfolding.  Let $\Sigma$ be a compact oriented surface, possibly with boundary.  Then:
\begin{enumerate}[label=\textup{(\roman*)},leftmargin=2.2em]
\item the assignment~\eqref{eq:psm-target-stabilizer} defines a flat Hamiltonian unfolding of the AKSZ target
\[
 (T^*[1](N/S),\omega_{\mathrm{can}},\Theta_\pi);
\]
\item the family of BV field spaces
\[
 \Fields_\Sigma(N/S):=\Map_S(T[1]\Sigma\times S,T^*[1](N/S))
\]
carries a flat BV unfolding;
\item on classical BV observables the connection is locally
\begin{equation}\label{eq:psm-bv-connection}
 \nabla^{\PSM}_\xi
 =\Lieder_{(Y_\xi^{\mathrm{cot}})_\Sigma}
 +\left\{
 \int_{T[1]\Sigma}\ell_{Z_\xi}(\mathbf X,\mathbf A),-
 \right\}_{\!\mathrm{BV}},
\end{equation}
and satisfies
\[
 [Q_{\PSM},\nabla^{\PSM}_\xi]=0,
 \qquad
 [\nabla^{\PSM}_\xi,\nabla^{\PSM}_\zeta]
 =\nabla^{\PSM}_{[\xi,\zeta]};
\]
\item if $\Sigma$ has boundary and the standard zero-section brane $N\subset T^*[1]N$ is imposed, the unfolding preserves the boundary condition.  A more general brane is preserved precisely when the target splitting lies in the simultaneous stabilizer of that brane.
\end{enumerate}
\end{theorem}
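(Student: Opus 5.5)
The plan is to reduce the theorem to the general AKSZ machinery of Section~\ref{sec:aksz-cmp}, using Lemma~\ref{lem:psm-cotangent-identities-cmp} as the only model-specific input. For (i), I will define the target splitting by $\xi\mapsto(Y_\xi^{\mathrm{cot}},0,\ell_{Z_\xi})$. Equations \eqref{eq:cot-lift-symplectic-cmp} and \eqref{eq:psm-hamiltonian-stabilizer-eq-cmp} show that this triple satisfies the stabilizer equations \eqref{eq:hamiltonian-stabilizer-target-cmp} with $\lambda=0$. The anchor condition holds because $Y_\xi$ is projectable onto $\xi$ and the cotangent lift preserves projectability.

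Flatness is the step I expect to be the main obstacle, and I will handle it by functoriality of the cotangent-lift map $(Y,Z)\mapsto(Y^{\mathrm{cot}},\ell_Z)$. The cotangent lift is a Lie algebra morphism, $[Y^{\mathrm{cot}},W^{\mathrm{cot}}]=[Y,W]^{\mathrm{cot}}$. The linear Hamiltonians satisfy $\{\ell_Y,\ell_W\}=\ell_{[Y,W]}$ up to the big-bracket sign, and $Y^{\mathrm{cot}}(\ell_Z)=\ell_{[Y,Z]}$. Hence the strict cone bracket on pairs $(Y,Z)$ in $D^1_{\bas,\pi}$ is carried to the bracket of the Hamiltonian stabilizer. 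This is the cone-level bracket $[(Y,Z),(Y',Z')]=([Y,Y'],[Y,Z']-[Y',Z]+[Z,Z'])$, or its transferred $L_\infty$ version. I will therefore check that this assignment is a morphism of crossed algebroids: it sends the inner cotangent part $\alpha\mapsto(\pi^\sharp\alpha,-\pi^\sharp\alpha)$ to Hamiltonian inner symmetries, using Example~\ref{ex:cotangent-nullhomotopy-cmp}. Composing with $s$ then gives a flat splitting. The curvature of the composite is the image of $F_s$, and it vanishes, or is inner with the image of $\Omega_s$ in the crossed case of Proposition~\ref{prop:crossed-splitting-cmp}. If strict bracket preservation fails on the cone component by a sign, I would absorb it into the choice of the homotopy component in the strict cone model.

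For (ii) and (iii), I will apply Theorem~\ref{thm:hamiltonian-aksz-transport-cmp} with source $M=T[1]\Sigma$, $d=2$ and $n=1$, so that $n-d=-1$ and the critical BV degree is reached. The transgressed splitting is $((Y_\xi^{\mathrm{cot}})_\Sigma,0,\int_{T[1]\Sigma}\ell_{Z_\xi}(\mathbf X,\mathbf A))$. The BV connection of Theorem~\ref{thm:bv-transport-cmp} is then exactly \eqref{eq:psm-bv-connection}. Its closedness $[Q_{\PSM},\nabla^{\PSM}_\xi]=0$ is the cancellation computed in that proof, with Stokes killing the $d_\Sigma$ term when $\partial\Sigma=\emptyset$. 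Flatness follows from \eqref{eq:transgressed-curvature-cmp}, extended to the Hamiltonian component as in the proof of Theorem~\ref{thm:hamiltonian-aksz-transport-cmp}.

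For (iv), I will invoke Proposition~\ref{prop:boundary-quantum-aksz-cmp}, using the zero-section brane $p_i|_{\partial\Sigma}=0$, that is, $\mathbf A$ vanishing on the boundary. The cotangent lift $Y^{\mathrm{cot}}$ is linear in $p$, so it preserves $\{p=0\}$. The functions $\ell_Z$ and $\Theta_\pi$ vanish on the zero section, so the boundary term from Stokes, $\int_{\partial\Sigma}\ell_{Z_\xi}(\mathbf X,\mathbf A)$ together with the $\mathbf A\,\delta\mathbf X$ term, vanishes there. This gives $K=0$ as the Lagrangian stabilizer component. For a general brane, the statement is tautological from the definition \eqref{eq:hamiltonian-lagrangian-stabilizer-cmp}: it holds exactly when the splitting lifts through that stabilizer.
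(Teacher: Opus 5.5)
Your proposal is correct and follows the same route as the paper. Part (i) comes from Lemma~\ref{lem:psm-cotangent-identities-cmp} together with the observation that $(Y,Z)\mapsto(Y^{\mathrm{cot}},\ell_Z)$ is a morphism of crossed controllers; parts (ii)--(iii) come from Theorem~\ref{thm:hamiltonian-aksz-transport-cmp} with source $T[1]\Sigma$ (so $n-d=-1$) and transgressed homotopy $\int_{T[1]\Sigma}\ell_{Z_\xi}$; and part (iv) uses that cotangent lifts preserve the zero section and $\ell_Z$ vanishes on it, with general branes referred to the boundary stabilizer of Proposition~\ref{prop:boundary-quantum-aksz-cmp}. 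Your explicit checks $[Y^{\mathrm{cot}},W^{\mathrm{cot}}]=[Y,W]^{\mathrm{cot}}$, $Y^{\mathrm{cot}}(\ell_Z)=\ell_{[Y,Z]}$ and $\{\ell_Z,\ell_W\}=\ell_{[Z,W]}$ spell out what the paper compresses into ``the big-bracket identification carries Schouten brackets to target Hamiltonian brackets''; with them, the sign hedge at the end of your flatness step is unnecessary.
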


\begin{proof}
By Lemma~\ref{lem:psm-cotangent-identities-cmp}, the triple~\eqref{eq:psm-target-stabilizer} stabilizes the canonical symplectic form and the Hamiltonian~$\Theta_\pi$.  The map from the ordinary Poisson controller to this Hamiltonian stabilizer is a morphism of crossed or $L_\infty$ controllers: it sends projectable vector fields to cotangent lifts and cone homotopies $Z$ to linear Hamiltonians~$\ell_Z$, and the big-bracket identification carries Schouten brackets to target Hamiltonian brackets.  Hence flatness of the Poisson unfolding gives flatness of the Hamiltonian target unfolding.  This proves~(i).

Apply Theorem~\ref{thm:hamiltonian-aksz-transport-cmp} with source $T[1]\Sigma$.  The transgressed Hamiltonian homotopy is
\[
 H_{\xi,\Sigma}
 =\int_{T[1]\Sigma}\ell_{Z_\xi}(\mathbf X,\mathbf A).
\]
The BV transport theorem gives exactly~\eqref{eq:psm-bv-connection} and the two commutator identities, proving~(ii) and~(iii).  For the standard boundary condition, the zero section is Lagrangian, cotangent lifts preserve it, and the linear Hamiltonians~$\ell_Z$ vanish on it.  Therefore the Lagrangian structure is horizontal.  The general brane statement is the boundary stabilizer statement of Proposition~\ref{prop:boundary-quantum-aksz-cmp}.
\end{proof}

On ghost-number-zero fields, the theorem says that the equations of motion~\eqref{eq:psm-eom}, their gauge symmetries~\eqref{eq:psm-gauge}, and the derived BV complexes controlling their solutions are parallel over~$S$.  In particular, whenever the relevant cohomology sheaves are locally finite, the classical BV cohomology of the PSM forms a flat system.

\subsection{Disk quantization and boundary star-products}
\label{subsec:psm-disk-quantization-cmp}

We now pass from the classical AKSZ model to its boundary deformation
quantization.  The source is the disk \(D\), and the standard AKSZ
boundary condition is the zero-section brane
\[
 N\hookrightarrow T^*[1]N .
\]
Equivalently, the boundary restriction of the momentum superfield
vanishes,
\(
 \mathbf A_i|_{T[1]\partial D}=0 .
\)
On ghost-number-zero fields this says that the one-form field
\(\eta_i\) has no component along the tangent direction of the boundary.
This is the boundary condition used in the Cattaneo--Felder
perturbative quantization of the Poisson sigma model, where boundary
observables are functions of the boundary value of \(X\)
\cite{CattaneoFelderPath,CattaneoFelderAKSZ}.  The BV--BFV
interpretation and the treatment of boundary terms are developed in the recent
\cite{CattaneoMnevReshetikhin,CattaneoMoshayediWernli}.

\smallskip

Perturbatively, one expands around the constant classical solution
\[
 X\equiv x\in N,
 \qquad
 \eta=0 .
\]
For cyclically ordered boundary points \(t_1,t_2,t_\infty\), the
Cattaneo--Felder prescription defines the boundary product by the
correlator
\begin{equation}\label{eq:cf-star-correlation}
 (f\star_\pi g)(x)
 =
 \left\langle
 f\bigl(X(t_1)\bigr)
 g\bigl(X(t_2)\bigr)
 \delta_x\bigl(X(t_\infty)\bigr)
 \right\rangle_{\!\PSM}.
\end{equation}
In a coordinate chart this perturbative expansion is Kontsevich's
star-product formula
\begin{equation}\label{eq:kontsevich-star-psm}
 f\star_\pi g
 =
 fg+
 \sum_{r\geq1}\planck^r
 \sum_{\Gamma\in G_{r,2}}
 w_\Gamma B_{\Gamma,\pi}(f,g),
\end{equation}
where \(w_\Gamma\) are configuration-space weights and
\(B_{\Gamma,\pi}\) are the bidifferential operators associated with
admissible graphs \cite{CattaneoFelderPath,KontsevichDQ}.  Associativity
is the Ward identity associated with the quantum master equation.  The
globalisation of this perturbative construction is treated in
\cite{CattaneoFelderGlobal,CattaneoMoshayediWernli}.

For a family over \(S\), the fibrewise products
\(\star_{\pi_s}\) need not form a horizontal family.  Horizontality is an
additional quantum transport condition: the perturbative BV/BFV
quantization must be chosen compatibly with the flat Poisson unfolding.
The following hypothesis records exactly the input needed for the
transport theorem.

\begin{hypothesis}[Quantum PSM transport datum]
\label{hyp:quantum-psm-transport-cmp}
For the flat Poisson family \((N/S,\pi,s)\), assume:
\begin{enumerate}[label=\textup{(\roman*)},leftmargin=2.2em]
\item the disk Poisson sigma model with the zero-section boundary
condition has been perturbatively quantized over \(S\) in a BV--BFV (or
equivalent) formalism, producing a filtered quantum observable theory
and the boundary star-products~\eqref{eq:kontsevich-star-psm};
\item the classical PSM unfolding of
Theorem~\ref{thm:psm-classical-transport} has a partial or full lift to
the corresponding quantum symmetry extension;
\item the boundary-observable map sends the associated graded quantum
vertical kernel of the PSM transport problem to Hochschild cochains of
the boundary star-product algebra and intertwines the Bianchi
differential with the Hochschild differential;
\item the BV Ward identities hold with the usual boundary correction.
\end{enumerate}
\end{hypothesis}

Let
\(
 \mathbb K_{\planck,\PSM}
\)
be the quantum vertical kernel of the PSM transport problem.  The
filtered lifting theorem gives obstruction classes
\begin{equation}\label{eq:psm-anomaly-class-cmp}
 \mathfrak a^{\PSM}_{r+1}
 \in
 H^2\!\left(
 \mathbf R\Gamma\!\bigl(
 S,
 C^\bullet_{\CE}
 (\bbT_S;\gr^{r+1}_{\planck}\mathbb K_{\planck,\PSM})
 \bigr)
 \right).
\end{equation}
These are the PSM transport anomalies.

\begin{theorem}[Horizontal Kontsevich product]
\label{thm:psm-quantum-transport}
Assume Hypothesis~\ref{hyp:quantum-psm-transport-cmp}.  If the anomaly
classes~\eqref{eq:psm-anomaly-class-cmp} vanish successively, so that a
flat quantum PSM transport exists, then:
\begin{enumerate}[label=\textup{(\roman*)},leftmargin=2.2em]
\item the quantum BV observables and the boundary factorization algebra
carry flat transport;
\item the boundary product, equivalently the Kontsevich star product in
local coordinates, is horizontal:
\begin{equation}\label{eq:horizontal-kontsevich-cmp}
 \nabla^{\planck}_\xi(f\star_\pi g)
 =
 \nabla^{\planck}_\xi(f)\star_\pi g
 +
 f\star_\pi\nabla^{\planck}_\xi(g);
\end{equation}
\item if a partial lift has residual curvature at order
\(\planck^{r+1}\), then the defect of
\eqref{eq:horizontal-kontsevich-cmp} at that order is a Hochschild
two-cocycle.  Its Hochschild cohomology class is the image of the first
nonzero PSM transport-anomaly class
\eqref{eq:psm-anomaly-class-cmp} under the boundary-observable map.
\end{enumerate}
\end{theorem}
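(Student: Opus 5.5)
The proof assembles results already established in the paper. The transport statements (i)--(ii) follow from the quantum lifting machinery applied to the PSM controller. The obstruction statement (iii) follows from the boundary-observable map of Hypothesis~\ref{hyp:quantum-psm-transport-cmp}(iii).

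\emph{Part (i).} We start from the classical flat BV unfolding $s_{\mathrm{BV}}$ of Theorem~\ref{thm:psm-classical-transport}. By Hypothesis~\ref{hyp:quantum-psm-transport-cmp}(ii) there is a filtered quantum symmetry extension $\UplanckBV\to\UBV$ for the disk theory. The assumption that the classes \eqref{eq:psm-anomaly-class-cmp} vanish successively lets us apply Theorem~\ref{thm:filtered-lifting-cmp} order by order. We choose compatible extensions at each stage, which is possible since vanishing makes the extensions a nonempty $H^1$-torsor. $\planck$-adic completeness then yields a flat quantum unfolding $\splanckBV$. Proposition~\ref{prop:quantum-bv-transport-cmp} gives flat transport on quantum BV observables. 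For the boundary factorization algebra we invoke Proposition~\ref{prop:boundary-quantum-aksz-cmp}, or equivalently Corollary~\ref{cor:quantum-factorization-transport-cmp}. This requires checking that the quantum lift acts locally. We argue that it does because the classical data $(Y_\xi^{\mathrm{cot}})_\Sigma$ and $\int\ell_{Z_\xi}$ are local functionals, and the quantum corrections are built from local counterterms in the BV--BFV formalism.

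\emph{Part (ii).} We define $\nablaplanck_\xi$ on boundary observables $f\mapsto f(X(t))$ as the induced action, restricted through the boundary-observable map. By Theorem~\ref{thm:psm-classical-transport}(iv), the zero-section brane is preserved: cotangent lifts preserve it and $\ell_Z$ vanishes on it. The transport therefore sends boundary observables to boundary observables.

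We then apply $\nablaplanck_\xi$ to the correlator \eqref{eq:cf-star-correlation}. Since the connection commutes with $Q_\planck$ and acts as a derivation of factorization products, the Ward identity of Hypothesis~\ref{hyp:quantum-psm-transport-cmp}(iv) distributes it over the three insertions. The insertion $\delta_x(X(t_\infty))$ is the one we must control. Its variation should reproduce exactly the transport of the output function, reading the correlator as a function of $x$. Concretely, $\nabla_\xi$ acting on the evaluation delta is the dual of $\nabla_\xi$ acting on the output. This gives \eqref{eq:horizontal-kontsevich-cmp}, i.e.\ that $\nablaplanck_\xi$ is a star-derivation. Equivalently, the lift lands in $D^1_{\star,\bas}$ of the star controller \eqref{eq:star-controller-cmp}, and Proposition~\ref{prop:star-realization-cmp} identifies it as a strict star-product transport.

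\emph{Part (iii).} We take a partial lift $s_r$ and an arbitrary graded extension $\widetilde s_{r+1}$. Its curvature represents $\mathfrak a^{\PSM}_{r+1}$, as in the proof of Theorem~\ref{thm:filtered-lifting-cmp}. We write the defect $\Delta_\xi(f,g)=\nablaplanck_\xi(f\star g)-\nablaplanck_\xi f\star g-f\star\nablaplanck_\xi g$ at order $\planck^{r+1}$. Lower orders vanish, so $\Delta_\xi$ satisfies the Hochschild cocycle condition with respect to the undeformed product, by associativity. By Hypothesis~\ref{hyp:quantum-psm-transport-cmp}(iii), the boundary-observable map is a chain map from $\gr^{r+1}_\planck\mathbb K_{\planck,\PSM}$ to Hochschild cochains, intertwining the Bianchi differential with $b$. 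It therefore sends the curvature cocycle to a Hochschild cocycle. Changing the lift by $\alpha$ changes both sides by coboundaries, so the class is well defined and equals the image of the anomaly class.

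The main obstacle is the precise identification in part (iii). We must match the defect of the Leibniz identity (a $1$-form in $\xi$ with values in Hochschild $2$-cochains) with the image of a degree-two CE class (a $2$-form in $\xi,\zeta$ with values in the kernel). The plan is to use total degree: a Hochschild $2$-cochain component together with CE degree $0$ or $1$ combines appropriately under the shift in Hypothesis~\ref{hyp:quantum-psm-transport-cmp}(iii). We then read the ``first nonzero'' class as the component with vanishing lower-order pieces, so that the mixed terms are cocycles individually. If this bookkeeping fails, the fallback is to state (iii) for the component of the anomaly in $\Omega^1_S\otimes\HH^2$, with curvature treated separately via Theorem~\ref{thm:fedosov-cmp}(iii)-type outer/inner decomposition.
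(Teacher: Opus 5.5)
Your treatment of (i) and (ii) follows the paper's route. Successive vanishing plus Theorem~\ref{thm:filtered-lifting-cmp} gives $\splanckBV$, Proposition~\ref{prop:quantum-bv-transport-cmp} gives flat transport of quantum observables, factorization transport handles boundary products, and the binary product \eqref{eq:cf-star-correlation} gives \eqref{eq:horizontal-kontsevich-cmp}. One correction: do not derive locality of the quantum lift from ``local counterterms.'' The extensions you pick at each order are arbitrary representatives in an $H^1$-torsor valued in $\gr^{r+1}_{\planck}\KplanckBV$, and they need not be local. The paper takes locality as input, as in Corollary~\ref{cor:quantum-factorization-transport-cmp}, and so should you.

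The genuine gap is in (iii). You correctly establish three facts. First, $\Delta_\xi$ is a Hochschild $2$-cocycle for the undeformed product; writing $\Delta_\xi=\pm[\mu_\star,\nablaplanck_\xi]$ and using $[\mu_\star,[\mu_\star,-]]=0$ is sound, and cleaner than the paper's appeal to the master equation. Second, the boundary image of $F_{\widetilde s_{r+1}}$ is a cocycle. Third, both change by coboundaries when the lift changes. But two separately well-defined classes need not coincide. ``Changing the lift changes both sides by coboundaries, so the class equals the image of the anomaly'' is a non sequitur. Nothing in your argument relates $\Delta_\xi$ to the curvature at the level of cochains.

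The paper supplies exactly that bridge. It differentiates the correlator \eqref{eq:cf-star-correlation} covariantly and uses the Ward identity with boundary correction. The result is the two Leibniz terms plus a boundary insertion of the order-$\planck^{r+1}$ curvature $R^{(r+1)}$, so the defect \emph{is} the boundary image of the curvature as a cochain. Closedness then comes from the master equation and the Ward and Bianchi identities, and independence of the lift from Hypothesis~\ref{hyp:quantum-psm-transport-cmp}(iii).

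The degree mismatch you flag is real, and the paper's ``insertion of $R^{(r+1)}$'' passes over it. Your total-degree idea is the right way to make the bridge precise, but it must be run on cochains and on the lift itself, not only on its curvature. View $\mu_\star+\nablaplanck$ in $\Tot(\Omega_S^\bullet\otimes C^{\bullet+1}(A_{\planck},A_{\planck}))$. Its Maurer--Cartan defect has $\Omega^1_S\otimes C^2$-component $\Delta_\xi$ and $\Omega^2_S\otimes C^1$-component the ordinary curvature. The form-degree quotient $F^1\to F^1/F^2$ is a chain map, since the product is fixed, and it extracts the $\Omega^1_S\otimes\HH^2$ class.

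This needs two further things. The boundary map must be compatible with the action of the whole truncated controller, not just the kernel as in Hypothesis~\ref{hyp:quantum-psm-transport-cmp}(iii). And the lift must be allowed to take non-derivation values. If $\widetilde s_{r+1}$ acts by star derivations, as in \eqref{eq:star-controller-cmp}, then $\Delta_\xi\equiv 0$ and the anomaly is pure curvature. In that case restricting to the $\Omega^1_S\otimes\HH^2$ component, as in your fallback, would discard the whole anomaly rather than weaken the statement.
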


\begin{proof}
A flat quantum PSM transport is a flat lift of the classical BV
unfolding to the quantum BV symmetry object.  By
Proposition~\ref{prop:quantum-bv-transport-cmp}, it commutes with the
quantum BV differential.  By the locality assumption in
Hypothesis~\ref{hyp:quantum-psm-transport-cmp}, it acts on the boundary
factorization algebra.  The factorization transport theorem,
Theorem~\ref{thm:factorization-transport-cmp}, then gives flat
transport of all boundary products.  Applied to the binary product
\eqref{eq:cf-star-correlation}, this gives precisely
\eqref{eq:horizontal-kontsevich-cmp}.

Now suppose only a partial lift has been chosen, and let
\(R^{(r+1)}\) be its associated-graded curvature at order
\(\planck^{r+1}\).  Covariantly differentiating the boundary correlator
produces the derivatives of the boundary insertions, together with an
insertion of \(R^{(r+1)}\).  The quantum master equation, the Ward
identity with boundary correction, and the Bianchi identity imply that
the resulting boundary cochain is closed for the Hochschild
differential.  If the partial lift is changed, \(R^{(r+1)}\) changes by
a Bianchi coboundary; by the compatibility assumed in
Hypothesis~\ref{hyp:quantum-psm-transport-cmp}, its boundary image
changes by a Hochschild coboundary.  Hence the Hochschild class of the
defect is exactly the image of the controller anomaly class.  If all
transport anomalies vanish, the horizontality identity holds to all
orders in \(\planck\).
\end{proof}

\begin{remark}[Relation with equivariant formality]
\label{rem:psm-equivariant-formality}
A controller-compatible formality morphism gives one mechanism for
producing the quantum lift required in
Theorem~\ref{thm:psm-quantum-transport}.  More precisely, if the
Kontsevich formality morphism used in the perturbative expansion is
equivariant, up to coherent homotopy, for the flat Poisson unfolding
\(s\), then it sends the classical Poisson stabilizer splitting to a
splitting of the Hochschild stabilizer.  This is the situation described
abstractly in Remark~\ref{rem:equivariant-formality-cmp}.  If the image
splitting is represented by strictly flat Hochschild derivations, the
boundary star-products form a strict flat family.  Otherwise one obtains
the crossed-flat version, and the remaining obstruction to strict
flatness is measured by the filtered transport-anomaly classes above.

In the strictly invariant case
\(
 \Lieder_{Y_\xi}\pi=0
\)
one may choose the cone homotopy to be zero.  Then the induced quantum
connection is the quantization of the projectable Poisson vector fields
\(Y_\xi\).
\end{remark}

\subsection{An anomaly-free linear model}
\label{subsec:psm-linear-model-cmp}

We end with a class of examples in which the transport-anomaly classes
of Theorem~\ref{thm:psm-quantum-transport} vanish for elementary
reasons.  Let
\(
 \mathfrak g\rightarrow S
\)
be a vector bundle of finite-dimensional Lie algebras equipped with a
flat connection \(\nabla^{\mathfrak g}\) preserving the bracket:
\[
 \nabla^{\mathfrak g}_\xi[x,y]
 =
 [\nabla^{\mathfrak g}_\xi x,y]
 +
 [x,\nabla^{\mathfrak g}_\xi y].
\]
Put
\(
 N:=\mathfrak g^\vee .
\)
The fibrewise linear Poisson structure on \(N/S\) is characterized by
\[
 \{\ell_x,\ell_y\}_{\pi_{\mathrm{lin}}}
 =
 \ell_{[x,y]},
 \qquad x,y\in\Gamma(S,\mathfrak g),
\]
where \(\ell_x\) denotes the fibrewise linear function on
\(\mathfrak g^\vee\) associated with \(x\).

The dual connection on \(N=\mathfrak g^\vee\) gives horizontal lifts
\(Y_\xi\) of vector fields \(\xi\) on \(S\).  Since
\(\nabla^{\mathfrak g}\) preserves the Lie bracket, the structure
constants of \(\mathfrak g\) are constant along horizontal directions.
Equivalently,
\[
 \mathcal L_{Y_\xi}\pi_{\mathrm{lin}}=0.
\]
Since the connection is flat,
\(
 [Y_\xi,Y_\zeta]=Y_{[\xi,\zeta]}.
\)
Thus
\(
 \xi\longmapsto Y_\xi
\)
defines a strict flat Poisson unfolding of the linear Poisson family
\((N/S,\pi_{\mathrm{lin}})\).  In the cone notation of the Poisson
controller, the stabilizing homotopy is zero.

The quantization is the Rees enveloping algebra bundle
\[
 U_{\planck}(\mathfrak g)
 :=
 T(\mathfrak g)[[\planck]]
 \Big/
 \bigl(xy-yx-\planck[x,y]\bigr).
\]
By the PBW theorem,
\[
 \operatorname{gr}_{\planck}U_{\planck}(\mathfrak g)
 \simeq
 \operatorname{Sym}(\mathfrak g)
 \simeq
 \mathcal O_{\mathfrak g^\vee},
\]
and the first-order commutator recovers the linear Poisson bracket.
Hence \(U_{\planck}(\mathfrak g)\) is an \(\planck\)-adic quantization
of \((\mathfrak g^\vee,\pi_{\mathrm{lin}})\).

The connection \(\nabla^{\mathfrak g}\) extends to the tensor algebra
\(T(\mathfrak g)[[\planck]]\).  Because it preserves the bracket, it
preserves the ideal generated by
\(
 xy-yx-\planck[x,y],
\)
and therefore descends to a flat connection by derivations of
\(U_{\planck}(\mathfrak g)\).  This connection is a strict quantum lift
of the classical Poisson unfolding.  Consequently all transport-anomaly
classes
\(
 \mathfrak a^{\PSM}_{r+1}
\)
vanish for this family.

In this linear model the boundary quantization of the disk Poisson
sigma model may be represented by the PBW star product on
\(\operatorname{Sym}(\mathfrak g)\), equivalently by the Rees
enveloping algebra above.  The quantum transport just constructed makes
this boundary product horizontal:
\[
 \nabla^{\planck}_\xi(a\star b)
 =
 \nabla^{\planck}_\xi(a)\star b
 +
 a\star\nabla^{\planck}_\xi(b).
\]
Thus the linear Poisson sigma model supplies an explicit anomaly-free
realization of the abstract transport formalism.

\appendix

\section{Strict affine models and coordinate identities}\label{app:strict-models}

This appendix records formulas used in strict computations.  They are not extra hypotheses in the intrinsic statements of the paper.  The first two subsections give the affine controller and the curvature-corrected Cartan normal form.  The final subsection fixes the shifted-cotangent sign convention used in the Poisson sigma model.

\subsection{Strict affine stabilizers}\label{app:affine-stabilizer}

Let $B\to A$ be smooth, let $X=\Spec A$, $S=\Spec B$, and let
\[
 \rho_\pi:\hhpi\longrightarrow T_{A/B}
\]
be a cofibrant perfect Hamiltonian dg-Lie algebroid.  A first-order Lie derivation is a pair $(\theta,\delta)$, with $\theta\in T_{A/\kk}$ and $\delta:\hhpi\to\hhpi$, satisfying
\[
 \delta(ax)=\theta(a)x+a\delta(x),
 \qquad
 \delta([x,y])=[\delta x,y]+[x,\delta y],
 \qquad
 \rho_\pi(\delta x)=[\theta,\rho_\pi(x)],
\]
with the usual Koszul signs in the dg case.  It is basic if the image of $\theta$ in $A\otimes_B T_{B/\kk}$ is $1\otimes\xi$ for a vector field $\xi$ on $B$.  The resulting dg-Lie algebra is $D^1_{\bas}(\hhpi/B)$, with inner map
\[
 x\longmapsto(\rho_\pi(x),\operatorname{ad}_x,0).
\]
The Poisson stabilizer is represented on the underlying complex by the cone
\[
 D^1_{\bas,\pi}(\hhpi/B)
 \simeq
 \Cone\!\left(
 D^1_{\bas}(\hhpi/B)
 \xrightarrow{D\mapsto\Lieder_D\pi}
 F^1\mathfrak{pol}_n(A/B)^\pi[1]
 \right)[-1].
\]
The inner map into the stabilizer is the cone map
\[
 j_\pi(x)=(\iota_\pi(x),\eta_x).
\]

If $j_\pi$ is injective, the effective affine controller is
\[
 \mathfrak u_\pi
 :=D^1_{\bas,\pi}(\hhpi/B)/j_\pi(\hhpi)
 \longrightarrow T_{B/\kk}.
\]
The strict effective version of Theorem~\ref{thm:classification-cmp} says that effective unfoldings are ordinary flat sections
\[
 T_{B/\kk}\longrightarrow\mathfrak u_\pi.
\]
Without effectivity one retains the action groupoid presented by
\(
 [\hhpi\to D^1_{\bas,\pi}]
\); its stabilizers are $\ker(j_\pi)$.

\subsection{Strict lifts and inner curvature}\label{app:inner-curvature}

Let
\[
 s:T_{B/\kk}\longrightarrow\mathfrak u_\pi
\]
be an effective flat splitting and choose a graded lift
\[
 \widetilde s:T_{B/\kk}
 \longrightarrow D^1_{\bas,\pi}(\hhpi/B).
\]
Flatness in the quotient does not imply that $\widetilde s$ is strictly flat.  Its curvature is inner:
\begin{equation}\label{eq:inner-curvature-app}
 [\widetilde s(\xi),\widetilde s(\zeta)]
 -\widetilde s([\xi,\zeta])
 =j_\pi\bigl(\Omega_{\widetilde s}(\xi,\zeta)\bigr)
\end{equation}
for a two-form
\[
 \Omega_{\widetilde s}
 \in\Omega^2_{B/\kk}\otimes_B\hhpi,
\]
unique up to central isotropy in the non-effective case.  The Jacobi identity gives the Bianchi equation
\[
 d_{\widetilde s}\Omega_{\widetilde s}=0
\]
modulo that isotropy.

Suppose the Chevalley--Eilenberg realization carries a strict Cartan calculus with Lie derivatives $L_u$ and contractions $\iota_x$.  On the tensor product of the relative complex with the de Rham complex of the base, the correct split operator is
\begin{equation}\label{eq:cartan-normal-app}
 D_{\widetilde s,\Omega}
 =d_{\mathrm{rel}}+d_B+L_{\widetilde s}
 -\iota_{\Omega_{\widetilde s}}.
\end{equation}
With the standard Cartan sign convention,
\begin{equation}\label{eq:cartan-square-app}
 D_{\widetilde s,\Omega}^2
 =L_{R_{\widetilde s}-j_\pi(\Omega_{\widetilde s})}
 -\iota_{d_{\widetilde s}\Omega_{\widetilde s}}.
\end{equation}
Thus \eqref{eq:inner-curvature-app} and the Bianchi identity imply
\[
 D_{\widetilde s,\Omega}^2=0.
\]
The shorter expression
\(
 d_{\mathrm{rel}}+d_B+L_{\widetilde s}
\)
is valid only when the chosen strict lift is itself flat.  Formula \eqref{eq:cartan-normal-app} is the derived analogue of the curvature-corrected normal form for a split Lie-algebroid extension.

\subsection{Cotangent lift identities for the Poisson sigma model}\label{app:psm-cotangent-identities}

We fix the coordinate convention used in \Cref{sec:psm}.  Let $T^*[1]M$ have coordinates $(x^i,p_i)$ of degrees $(0,1)$ and canonical bracket determined by
\[
 \{x^i,p_j\}_{\mathrm{can}}=\delta^i_j.
\]
To a polyvector field
\[
 P=\frac1{r!}P^{i_1\cdots i_r}(x)
 \partial_{i_1}\wedge\cdots\wedge\partial_{i_r}
\]
we associate the fibrewise-polynomial function
\[
 \Theta_P:=\frac1{r!}P^{i_1\cdots i_r}(x)p_{i_1}\cdots p_{i_r}.
\]
With this convention,
\begin{equation}\label{eq:big-bracket-schouten-app}
 \{\Theta_P,\Theta_Q\}_{\mathrm{can}}
 =\Theta_{[P,Q]_{\mathrm{Sch}}}.
\end{equation}
For a vector field $Y=Y^i\partial_i$, put $\ell_Y=Y^i p_i$.  Its Hamiltonian vector field is the cotangent lift $Y^{\mathrm{cot}}$.  Therefore
\[
 \Lieder_{Y^{\mathrm{cot}}}\omega_{\mathrm{can}}=0.
\]
For a bivector $\pi$, \eqref{eq:big-bracket-schouten-app} gives
\[
 \Lieder_{Y^{\mathrm{cot}}}\Theta_\pi
 =\{\ell_Y,\Theta_\pi\}_{\mathrm{can}}
 =\Theta_{[Y,\pi]_{\mathrm{Sch}}}
 =\Theta_{\Lieder_Y\pi}.
\]
Similarly, for a vector field $Z$,
\[
 \{\Theta_\pi,\ell_Z\}_{\mathrm{can}}
 =\Theta_{[\pi,Z]_{\mathrm{Sch}}}.
\]
Thus, under the convention $d_\pi=[\pi,-]_{\mathrm{Sch}}$, a cone equation
\[
 d_\pi Z=[\pi,Z]=\Lieder_Y\pi
\]
is exactly the Hamiltonian stabilizer equation
\[
 Q_\pi\ell_Z=Y^{\mathrm{cot}}(\Theta_\pi).
\]
This is the sign convention used in \eqref{eq:psm-hamiltonian-stabilizer-eq-cmp}.

\end{document}